\newcommand{\mathsym}[1]{{}}
\newcommand{\unicode}[1]{{}}
\newcommand{\del}{{\partial}}
\newcommand{\delbar}{\overline{\partial}}
\newcommand{\ddbar}{\partial\overline{\partial}}
\newcommand{\ve}{\varepsilon}
\newcommand{\vp}{\varphi}
\theoremstyle{plain} 
\newtheorem{theorem}{\indent\sc Theorem}[section]
\newtheorem{lemma}[theorem]{\indent\sc Lemma}
\newtheorem{corollary}[theorem]{\indent\sc Corollary}
\newtheorem{proposition}[theorem]{\indent\sc Proposition}
\theoremstyle{definition} 
\newtheorem{remark}[theorem]{\indent\sc Remark}
\newtheorem{example}[theorem]{\indent\sc Example}
\newtheorem{question}[theorem]{\indent\sc Question}
\begin{document}

\title[$\delbar$ cohomology of the complement of a semi-positive anticanonical divisor]
{$\delbar$ cohomology of the complement of a semi-positive anticanonical divisor of a compact surface} 

\author[T. Koike]{Takayuki Koike} 

\subjclass[2020]{ 
Primary  	32C35; Secondary 32F10.
}
%
\keywords{ 
Dolbeault cohomology, The blow-up of the projective plane at nine points, Toroidal groups, The $\ddbar$-lemma. 
}
\address{
Department of Mathematics, Graduate School of Science, Osaka Metropolitan University \endgraf
3-3-138, Sugimoto, Sumiyoshi-ku Osaka, 558-8585 \endgraf
Japan
}
\email{tkoike@omu.ac.jp}

\maketitle

\begin{abstract}
Let $X$ be a non-singular compact complex surface such that the anticanonical line bundle admits a smooth Hermitian metric with semi-positive curvature. 
For a non-singular hypersurface $Y$ which defines an anticanonical divisor, 
we investigate the $\delbar$ cohomology group $H^1(M, \mathcal{O}_M)$ of the complement $M=X\setminus Y$. 
\end{abstract}

\section{Introduction}

Let $X$ be a connected compact K\"ahler surface such that the anticanonical line bundle $K_X^{-1}:=\Lambda^2T_X$ is {\it semi-positive}: i.e. there exists a $C^\infty$ Hermitian metric $h$ on $K_X^{-1}$ such that the Chern curvature $\sqrt{-1}\Theta_h$ is positive semi-definite at each point of $X$. 
Our interest is in the complex analytic structure of the complement $M$ of the support of an anticanonical divisor $D\in |K_X^{-1}|$. 
In this paper, we investigate it under the assumption that $D$ is the divisor defined by a non-singular hypersurface $Y\subset X$: i.e. $D=Y_1+Y_2+\cdots +Y_N$, where $\{Y_\nu\}_{\nu=1}^N$ is the set of all the connected components of $Y$. 
Note that the degree ${\rm deg}\,N_{Y_\nu/X}$ of the normal bundle $N_{Y_\nu/X}$ of each $Y_\nu$ is non-negative in this case, since it coincides with $\int_{Y_\nu}\textstyle\frac{\sqrt{-1}}{2\pi}\Theta_h$. 

If ${\rm deg}\,N_{Y_\nu/X}$ is positive for some component $Y_\nu$ of $Y$, 
it follows from Hodge index theorem and \cite[Proposition (2.2)]{S} that $Y$ is connected and that the complement $M$ is strongly $1$-convex (See \S \ref{section:prelim_case_positive} for the details). 
Otherwise, each $N_{Y_\nu/X}$ is an element of the Picard variety ${\rm Pic}^0(Y_\nu)$ of $Y_\nu$. 
When $N_{Y_\nu/X}$ is a torsion element of ${\rm Pic}^0(Y_\nu)$ (i.e. $N_{Y_\nu/X}^{n}:=N_{Y_\nu/X}^{\otimes n}$ is holomorphically trivial for a positive integer $n$) for some component $Y_\nu$ of $Y$, it follows from \cite[Theorem 1.1 (i)]{Ko2020} that there exists an elliptic fibration $f\colon X\to B$ onto a compact Riemann surface $B$ such that $Y_\mu$ is a fiber of $f$ for each $\mu\in\{1, 2, \dots, N\}$ (See \S \ref{section:prelim_case_torsion_fibrartion}). 
Especially, $N_{Y_\mu/X}$ is a torsion element of ${\rm Pic}^0(Y_\mu)$ for each $Y_\mu$ and $M$ admits a proper holomorphic surjection onto an open Riemann surface in this case. 
In the present paper, we mainly investigate the remaining case: i.e. the case where $N_{Y_\nu/X}$ is a non-torsion element of ${\rm Pic}^0(Y_\nu)$ for each $\nu\in \{1, 2, \dots, N\}$. In this case, as will be observed in \S \ref{section:prelim_case_non_tor_1compl_etc}, it follows from \cite{Ko2020} and \cite[Remark 5.2]{O} that $M$ is a weakly $1$-complete manifold which admits a family of compact Levi-flat hypersurfaces. 

As an example, let us consider the case where $X={\bf P}(\mathbb{I}_C\oplus F)$ is a ruled surface over an elliptic curve $C$ obtained by the fiberwise projectivization of the vector bundle $\mathbb{I}_C\oplus F$, where $\mathbb{I}_C$ is the holomorphically trivial line bundle over $C$ and $F$ is a non-torsion element of ${\rm Pic}^0(C)$. 
Note that $K_X^{-1}$ is semi-positive, since the fiberwise Fubini-Study metric induces a $C^\infty$ Hermitian metric on $K_X^{-1}$ with semi-positive curvature. 
For non-singular hypersurfaces $Y_1:={\bf P}(\mathbb{I}_C)$ and $Y_2:={\bf P}(F)$, it can be easily checked that $Y_1+Y_2$ is an anticanonical divisor of $X$, and that $N_{Y_\nu/X}$ is a non-torsion element of ${\rm Pic}^0(Y_\nu)$ for $\nu=1, 2$. 
In this example, $M=X\setminus (Y_1\cup Y_2)$ is a {\it toroidal group} (a complex Lie group without global non-constant holomorphic function). 
By Kazama's theorem \cite{Ka} on the $\delbar$ cohomology groups of toroidal groups, it follows that the $\delbar$ cohomology group $H^1(M, \mathcal{O}_M)$ is either of $1$-dimension or {\it of non-Hausdorff type}. 
Here we say that $H^1(M, \mathcal{O}_M)$ is of non-Hausdorff type 
if it is of infinite dimension and not Hausdorff by regarding it as the topological vector space obtained by the quotient $Z^{0, 1}(M)/B^{0, 1}(M)$, 
where we are regarding the space $A^{p, q}(M)$ of all the smooth $(p, q)$-forms on $M$ as a Fr\'echet space by using the topology of uniform convergence on compact sets in all the derivatives of coefficient functions, and topologizing $B^{0, 1}(M):={\rm Image}(\delbar\colon A^{0, 0}(M)\to A^{0, 1}(M))$ and $Z^{0, 1}(M):={\rm Ker}(\delbar\colon A^{0, 1}(M)\to A^{0, 2}(M))$ by using the relative topologies as subspaces of $A^{0, 1}(M)$. 
Moreover, Kazama also gives a sufficient and necessary condition for ${\rm dim}\,H^1(M, \mathcal{O}_M)<\infty$ by using an irrational number theoretical condition for the discrete subgroup of the Euclidean space which corresponds to the toroidal group \cite[Theorem 4.3]{Ka} (See also \cite{KaU}, \cite[\S 2.2]{AK}). 
Note that a non-Hausdorffness of the $L^2$ $\delbar$ cohomology is shown on a certain kind of a Stein domain in \cite{CS}. 

In the present paper, as a generalization of Kazama's result for surfaces, we show the following: 
\begin{theorem}\label{thm:main}
Let $X$ be a non-singular connected compact complex surface such that $K_X^{-1}$ is semi-positive, and $Y\subset X$ be a non-singular hypersurface such that $Y_1+Y_2+\cdots+Y_N$ is an anticanonical divisor, where $\{Y_\nu\}_{\nu=1}^N$ is the set of all the connected components of $Y$. Assume that $N_{Y_\nu/X}$ is a non-torsion element of ${\rm Pic}^0(Y_\nu)$ for each $\nu\in \{1, 2, \dots, N\}$. 
Then, for the complement $M:=X\setminus Y$, either of the following two conditions holds: \\
$(i)$ ${\rm dim}\,H^1(M, \mathcal{O}_M) = {\rm dim}\,H^1(X, \mathcal{O}_X)$, or\\ 
$(ii)$ $H^1(M, \mathcal{O}_M)$ is of non-Hausdorff type: i.e. $H^1(M, \mathcal{O}_M)$ is of infinite dimension and not Hausdorff with respect to the topology as above. \\
Moreover, the condition $(i)$ holds if and only if there exist positive numbers $\ve$ and $\delta$ such that, for any $\nu\in\{1, 2, \dots, N\}$ and any positive integer $n$, $d(\mathbb{I}_{Y_\nu}, N_{Y_\nu/X}^n)>\ve\cdot \delta^n$ holds, where $\mathbb{I}_{Y_\nu}$ is the holomorphically trivial line bundle over $Y_\nu$ and $d$ is the distance of ${\rm Pic}^0(Y_\nu)$ induced from the Euclidean norm of the universal covering space $H^1(Y_\nu, \mathcal{O}_{Y_\nu})$ of ${\rm Pic}^0(Y_\nu)$. 
\end{theorem}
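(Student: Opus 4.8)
The plan is to realize $H^1(M,\mathcal{O}_M)$ through a Mayer--Vietoris decomposition of $M$ adapted to the anticanonical section, and to reduce the whole problem to a Fourier analysis on a fundamental system of punctured neighborhoods of $Y$. Fix a $C^\infty$ Hermitian metric $h$ on $K_X^{-1}$ with $\sqrt{-1}\Theta_h\ge 0$ and a holomorphic section $s\in H^0(X,K_X^{-1})$ with ${\rm div}(s)=Y$. Then $\Phi:=-\log\|s\|_h^2$ is a $C^\infty$ plurisubharmonic exhaustion of $M$ whose complex Hessian is $\sqrt{-1}\Theta_h|_M$; the sublevel sets $M_c:=\{\Phi<c\}$ are relatively compact pseudoconvex domains and the sets $\{\Phi>t\}$ form a fundamental system of (punctured) neighborhoods of $Y$. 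Since $\int_{Y_\nu}\frac{\sqrt{-1}}{2\pi}\Theta_h={\rm deg}\,N_{Y_\nu/X}=0$ while $\Theta_h|_{Y_\nu}\ge 0$, the curvature $\Theta_h$ vanishes identically along each $Y_\nu$, so $h$ induces a flat metric on $N_{Y_\nu/X}$; this flatness is what linearizes the geometry near $Y_\nu$. Writing $U_1:=\{\Phi<c_0\}\Subset M$ for $c_0$ large and $U_2:=\{\Phi>c_0-1\}$ (a punctured neighborhood of $Y$), with collar $U_1\cap U_2=\{c_0-1<\Phi<c_0\}$, the Mayer--Vietoris sequence for Dolbeault cohomology gives
\begin{equation*}
H^0(U_1,\mathcal{O})\oplus H^0(U_2,\mathcal{O})\xrightarrow{\ r\ } H^0(U_1\cap U_2,\mathcal{O})\longrightarrow H^1(M,\mathcal{O}_M)\longrightarrow H^1(U_1,\mathcal{O})\oplus H^1(U_2,\mathcal{O})\longrightarrow H^1(U_1\cap U_2,\mathcal{O}).
\end{equation*}
First I would show, using the exact sequences $0\to\mathcal{O}_X(mY)\to\mathcal{O}_X((m+1)Y)\to N_{Y/X}^{m+1}\to 0$ together with the vanishing $H^j(Y_\nu,N_{Y_\nu/X}^{m})=0$ ($j=0,1$, $m\ge 1$) that follows from the non-torsion hypothesis (each $Y_\nu$ is an elliptic curve by adjunction), that the terms $H^1(U_1,\mathcal{O})$ and $H^1(U_2,\mathcal{O})$ are finite-dimensional and that the cokernel of the last map, together with them, accounts precisely for $H^1(X,\mathcal{O}_X)$; thus the whole problem is reduced to the \emph{Cousin--I obstruction} $\mathrm{coker}\,r=H^0(U_1\cap U_2,\mathcal{O})/(\mathrm{Image}\,r)$ on the collar, both as a vector space and as a topological vector space.

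The heart of the argument is a local normal form near $Y$. Using the flat structure of $N_{Y_\nu/X}$ and the semi-positivity of $\Theta_h$, together with the structure results of \cite{Ko2020} and \cite{O}, I would put each component of the collar near $Y_\nu$ into a ``toroidal'' form: a neighborhood of $Y_\nu$ in $X$ is, up to a controllable error, a neighborhood of the zero section in the flat line bundle $N_{Y_\nu/X}$, and the hypersurfaces $\{\|s\|_h=\mathrm{const}\}$ are the associated Levi-flat leaves. On this model a holomorphic function on the collar admits a Laurent-type expansion along the elliptic curve $Y_\nu$ indexed by $n\in\mathbb{Z}$, the $n$-th coefficient being a holomorphic section of a line bundle built from $N_{Y_\nu/X}^{n}$. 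The splitting $f=f_1+f_2$ with $f_i\in\mathcal{O}(U_i)$ can be carried out term by term, and the radius to which the $n$-th mode extends inward — equivalently, the size of the $n$-th component of $f_2$ — is a small-denominator phenomenon governed from above and below by a fixed power of $d(\mathbb{I}_{Y_\nu},N_{Y_\nu/X}^{n})^{-1}$, exactly as in Kazama's computation for toroidal groups \cite{Ka} (here $d(\mathbb{I}_{Y_\nu},N_{Y_\nu/X}^{n})$ is the distance from $n\beta_\nu$ to the period lattice, where $\beta_\nu\in H^1(Y_\nu,\mathcal{O}_{Y_\nu})$ represents $N_{Y_\nu/X}$). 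Consequently: if for every $\nu$ the distances $d(\mathbb{I}_{Y_\nu},N_{Y_\nu/X}^{n})$ do not decay faster than exponentially in $n$ — i.e. the Diophantine inequality of $(i)$ holds — then $r$ is surjective with continuous (tame) section and $\mathrm{coker}\,r=0$; otherwise, for a sparse subsequence $n_k\to\infty$ along which $d(\mathbb{I}_{Y_\nu},N_{Y_\nu/X}^{n_k})$ decays super-exponentially, suitably normalized $n_k$-th modes produce elements of $H^0(U_1\cap U_2,\mathcal{O})$ whose span meets $\mathrm{Image}\,r$ in a non-closed subspace, making $\mathrm{coker}\,r$ infinite-dimensional and non-Hausdorff.

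Putting the two steps together proves the dichotomy. If the Diophantine condition holds, $\mathrm{coker}\,r=0$ and the Mayer--Vietoris sequence collapses to an isomorphism $H^1(M,\mathcal{O}_M)\cong H^1(X,\mathcal{O}_X)$, which is finite-dimensional and hence Hausdorff; one checks that the isomorphism is the natural restriction map, and its injectivity follows again from $H^1(Y_\nu,N_{Y_\nu/X}^{m})=0$, so $(i)$ holds. If the Diophantine condition fails, the non-closed image of $r$ forces $B^{0,1}(M)$ to be non-closed in $Z^{0,1}(M)$ and the quotient to be infinite-dimensional, i.e. $(ii)$ holds; since $(i)$ and $(ii)$ are mutually exclusive, this also yields the ``either/or'' assertion. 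I expect the main obstacle to be the rigorous construction of the toroidal normal form near each $Y_\nu$ and, especially, the \emph{uniform} control of the error terms coming from the fact that the neighborhoods are only approximately flat, so as to pin the size of the small denominators to exactly $d(\mathbb{I}_{Y_\nu},N_{Y_\nu/X}^{n})$ rather than to a perturbed quantity; a secondary difficulty is the topological bookkeeping — transporting the non-Hausdorffness of $\mathrm{coker}\,r$, measured in the Fr\'echet topology, through the Mayer--Vietoris sequence to the stated quotient topology on $Z^{0,1}(M)/B^{0,1}(M)$ — together with the finiteness of $H^1(U_i,\mathcal{O})$ for the pseudoconvex, generally non-strictly-pseudoconvex, domains $U_i$.
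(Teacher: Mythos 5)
Your overall strategy (a Mayer--Vietoris reduction plus a mode-by-mode small-denominator analysis near $Y$ indexed by the powers $N_{Y_\nu/X}^{n}$, with Ueda/Kazama-type estimates) is in the right spirit, but two of your intermediate claims do not hold as stated, and the step you yourself flag as ``the main obstacle'' is precisely where the proof's essential new idea is needed. First, with your decomposition $M=U_1\cup U_2$ the claim that $H^1(U_1,\mathcal{O})$ and $H^1(U_2,\mathcal{O})$ are finite-dimensional (argued via the graded pieces $H^j(Y_\nu,N_{Y_\nu/X}^m)=0$) is false in general: vanishing of the formal graded pieces gives no control on convergence, and in fact for a punctured neighborhood $V_\nu^*$ the group $H^1(V_\nu^*,\mathcal{O})$ can be infinite-dimensional and non-Hausdorff even when the Diophantine condition of $(i)$ holds (the paper's Case I\!I, Proposition \ref{prop:section5_main} $(ii)$); moreover $U_1$ has Levi-flat boundary, so no Andreotti--Grauert-type finiteness is available, and your sequence never actually produces the comparison with $\dim H^1(X,\mathcal{O}_X)$. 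The paper instead covers the \emph{compact} surface $X$ by $\{M,V_1,\dots,V_N\}$, so that $H^1(X,\mathcal{O}_X)$ enters directly and only the restriction maps $H^1(V_\nu,\mathcal{O}_{V_\nu})\to H^1(V_\nu^*,\mathcal{O}_{V_\nu^*})$ need to be understood; in Case I\!I these are bijections between infinite-dimensional non-Hausdorff spaces, which is exactly the phenomenon your finite-dimensionality step would exclude.

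Second, and more fundamentally, your Laurent-type expansion on the collar, with $n$-th coefficient a section of (a bundle built from) $N_{Y_\nu/X}^{n}$ and denominators pinned to $d(\mathbb{I}_{Y_\nu},N_{Y_\nu/X}^{n})$, is only legitimate when the neighborhood is an honest holomorphic tubular neighborhood, i.e.\ when the transverse coordinate transitions do not mix with the base variable (cf.\ Remark \ref{rmk:keisikikai_C_F_or_holtubnbhd}); otherwise the coefficients of a general holomorphic function are not cocycles for $N_{Y_\nu/X}^{\pm n}$ and the small denominators are not the quantities $d(\mathbb{I}_{Y_\nu},N_{Y_\nu/X}^{n})$ at all. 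Such a linearization is available precisely in the Diophantine cases (Proposition \ref{prop:hol_tub_nbhd_aru}), but in the asymptotically zero case --- the one you need for assertion $(ii)$ --- it may fail, and ``uniform control of error terms'' is not a mechanism to restore it. The paper's way around this is to run the expansion only for the sheaf $\mathcal{C}_{\mathcal{F}}$ of leafwise-constant holomorphic functions of the canonical foliation, and then to use the hypothesis $K_V^{-1}=[Y]$ (the meromorphic $2$-form $\eta$) to show $H^0(V,\mathcal{O}_V/\mathcal{C}_{\mathcal{F}})$ and $H^0(V^*,\mathcal{O}_{V^*}/\mathcal{C}_{\mathcal{F}}^*)$ are one-dimensional (Propositions \ref{prop:H0_of_O/CF} and \ref{prop:H0_of_O/CF_*version}), so that passing from $\mathcal{C}_{\mathcal{F}}$-cohomology to $\mathcal{O}$-cohomology loses at most finite dimension. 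Your proposal never uses the anticanonical hypothesis beyond flatness of the normal bundles and the psh exhaustion, so this bridge is missing, and without it the construction of infinitely many linearly independent classes in the closure of $\{0\}$ in $H^1(M,\mathcal{O}_M)$ (the content of Lemma \ref{lem:constr_of_g_lambdas} and \S 6) cannot be completed.
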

Note that $H^1(Y_\nu, \mathcal{O}_{Y_\nu})\cong \mathbb{C}$ holds for each $\nu$, 
since it follows from the adjunction formula that $Y_\nu$'s are elliptic curves. 
Note also that, by using the terms in \cite[\S 2.2]{KT2}, one can reword the sufficient and necessary condition for $(i)$ in Theorem \ref{thm:main} as follows: $(i)$ holds if and only if $N_{Y_\nu/X}$ is an {\it asymptotically positive} element of ${\rm Pic}^0(Y_\nu)$ for each $\nu$. 

For example, one can apply Theorem \ref{thm:main} to the following: 
\begin{example}\label{eg:blP2at9pts_sp_by_KoTLS}
Fix a smooth cubic $C\subset \mathbb{P}^2$, generators $\gamma_1$ and $\gamma_2$ of the fundamental group $\pi_1(C, *)$ of $C$, and eight points $p_1, p_2, \dots, p_8\in C$, where $\mathbb{P}^2$ is the projective plane. 
Then, as is simply observed, the map $I\colon C\to {\rm Pic}^0(C)$ defined by $I(q) := [C]|_C\otimes [-p_1-p_2-\cdots-p_8-q]$ is bijective, where $[D]$ denotes the line bundle which corresponds to the divisor $D$. By identifying ${\rm Pic}^0(C)$ with the group of all the unitary flat line bundles over $C$ (see \cite[\S 1]{U} for example), also consider the map $F\colon \mathbb{R}\to {\rm Pic}^0(C)$ defined by letting $F(\theta)$ be the unitary flat line bundle over $C$ whose monodromy $\rho_{F(\theta)}\colon \pi_1(C, *)\to {\rm U}(1)$ satisfies $\rho_{F(\theta)}(\gamma_1)=1$ and $\rho_{F(\theta)}(\gamma_2)=\exp(2\pi\sqrt{-1}\theta)$ for each $\theta\in \mathbb{R}$, where ${\rm U}(1):=\{t\in\mathbb{C}\mid |t|=1\}$. 
Denoting by $q_\theta$ the point of $C$ which satisfies $I(q_\theta)=F(\theta)$, 
let $\pi_\theta\colon X_\theta \to \mathbb{P}^2$ be the blow-up of $\mathbb{P}^2$ at nine points $p_1, p_2, \dots, p_8$, and $q_\theta$. 
Then the strict transform $Y_\theta := (\pi_\theta^{-1})_*C$ is a non-singular hypersurface of $X$ such that $[Y_\theta]=K_{X_\theta}^{-1}$ and $N_{Y_\theta/X_\theta}\cong (\pi_\theta|_{Y_\theta})^*F(\theta)$. 
Note that $K_{X_\theta}^{-1}$ is semi-positive for each $\theta\in \mathbb{R}$ \cite[Theorem 1.3]{KoTLS}. 
\end{example}
We say that an irrational number $\theta\in \mathbb{R}$ is {\it asymptotically positive} 
if there exists positive numbers $\ve$ and $\delta$ such that, for any positive integer $n$, $\min_{m\in \mathbb{Z}}|n\theta -m|>\ve\cdot \delta^n$ holds. Otherwise, we say that $\theta$ is {\it asymptotically zero}. 
Note that, by using the terms in \cite[\S 2.2]{KT2}, $F(\theta)$ as in Example \ref{eg:blP2at9pts_sp_by_KoTLS} is torsion if $\theta$ is rational, 
asymptotically positive if $\theta$ is asymptotically positive, and 
asymptotically zero if $\theta$ is asymptotically zero. 
Then we have the following: 
\begin{corollary}\label{cor:main}
Let $\theta$ be a real number and $(X_\theta, Y_\theta)$ be as in Example \ref{eg:blP2at9pts_sp_by_KoTLS}. Then the following holds for $M_\theta:=X_\theta\setminus Y_\theta$: \\
$(i)$ When $\theta$ is a rational number, $H^1(M_\theta, \mathcal{O}_{M_\theta})$ is of infinite dimension and Hausdorff. 
$(ii)$ When $\theta$ is an asymptotically positive irrational number, $H^1(M_\theta, \mathcal{O}_{M_\theta})=0$. \\
$(iii)$ When $\theta$ is an asymptotically zero irrational number, $H^1(M_\theta, \mathcal{O}_{M_\theta})$ is of non-Hausdorff type. 
\end{corollary}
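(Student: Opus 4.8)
The plan is to deduce the corollary by feeding the three cases into Theorem \ref{thm:main} (for the irrational cases) and into the structure theory recalled in the introduction (for the rational case). First I would fix notation: by Example \ref{eg:blP2at9pts_sp_by_KoTLS}, $X_\theta$ is semi-positive, $Y_\theta$ is a single non-singular elliptic curve giving an anticanonical divisor (so $N=1$ here), and $N_{Y_\theta/X_\theta}\cong(\pi_\theta|_{Y_\theta})^*F(\theta)$. Since $\pi_\theta|_{Y_\theta}\colon Y_\theta\to C$ is an isomorphism, identifying ${\rm Pic}^0(Y_\theta)$ with ${\rm Pic}^0(C)$ I would note that $N_{Y_\theta/X_\theta}^n$ corresponds to $F(n\theta)$, whose monodromy sends $\gamma_2$ to $\exp(2\pi\sqrt{-1}\,n\theta)$. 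Hence, under the normalization of the Euclidean metric on the universal cover $H^1(C,\mathcal{O}_C)\cong\mathbb{C}$ that makes $d(\mathbb{I}_C,F(\psi))$ comparable to $\min_{m\in\mathbb{Z}}|\psi-m|$ (up to a fixed positive constant coming from the chosen basis of the period lattice), the number-theoretic condition ``$N_{Y_\theta/X_\theta}$ is asymptotically positive'' in Theorem \ref{thm:main} is literally the condition ``$\theta$ is asymptotically positive'' defined just before the corollary. This comparison of metrics is the one genuinely fiddly point, but it is routine: both are translation-invariant metrics on the real torus $\mathbb{R}/\mathbb{Z}$ (after restricting to the relevant one-parameter subgroup $F(\mathbb{R})$), hence proportional, and one only needs the proportionality constant to be finite and nonzero, which is automatic.

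For part $(ii)$, when $\theta$ is asymptotically positive and irrational, $F(\theta)$ is a non-torsion element of ${\rm Pic}^0(C)$, so Theorem \ref{thm:main} applies and, by the comparison above, condition $(i)$ of that theorem holds; thus ${\rm dim}\,H^1(M_\theta,\mathcal{O}_{M_\theta})={\rm dim}\,H^1(X_\theta,\mathcal{O}_{X_\theta})$. It remains to compute the right-hand side: $X_\theta$ is a blow-up of $\mathbb{P}^2$ at nine points, a rational surface, so $H^1(X_\theta,\mathcal{O}_{X_\theta})=0$ (rationality, or equivalently $q(X_\theta)=0$, is preserved under blow-ups of $\mathbb{P}^2$). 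Hence $H^1(M_\theta,\mathcal{O}_{M_\theta})=0$, which in particular is Hausdorff.

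For part $(iii)$, when $\theta$ is an asymptotically zero irrational number, $F(\theta)$ is again non-torsion, Theorem \ref{thm:main} applies, and the comparison of metrics shows that condition $(i)$ of Theorem \ref{thm:main} fails; therefore condition $(ii)$ holds, i.e. $H^1(M_\theta,\mathcal{O}_{M_\theta})$ is of non-Hausdorff type. For part $(i)$, when $\theta\in\mathbb{Q}$, the line bundle $N_{Y_\theta/X_\theta}\cong(\pi_\theta|_{Y_\theta})^*F(\theta)$ is torsion, so Theorem \ref{thm:main} does not apply; instead I would invoke the torsion case recalled in \S\ref{section:prelim_case_torsion_fibrartion}: there is an elliptic fibration $f\colon X_\theta\to B$ over a compact Riemann surface with $Y_\theta$ a fiber, and $M_\theta$ admits a proper holomorphic surjection onto the open Riemann surface $B':=B\setminus f(Y_\theta)$ whose generic fiber is an elliptic curve (and all fibers are compact). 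One then computes $H^1(M_\theta,\mathcal{O}_{M_\theta})$ via the Leray spectral sequence (equivalently, via $R^qf_*\mathcal{O}$): the term $H^1(B',f_*\mathcal{O})=H^1(B',\mathcal{O}_{B'})$ contributes nothing unusual, while $H^0(B',R^1f_*\mathcal{O}_{M_\theta})$ is the space of holomorphic sections over the non-compact curve $B'$ of a line bundle (or rank-one sheaf) with generically one-dimensional stalks; since $B'$ is a non-compact Riemann surface, this space is infinite-dimensional, and it is Hausdorff because $B'$ is Stein and one is looking at $H^0$ of a coherent sheaf, so the Fréchet-space structure is the honest one with no coboundary-closure pathology. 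The main obstacle here is making the last sheaf-theoretic identification precise—in particular identifying $R^1f_*\mathcal{O}_{M_\theta}$ and checking the Hausdorff property cleanly—but this can be handled by restricting to a neighborhood of each fiber and using the flatness/constancy of $H^1$ of the (elliptic-curve) fibers together with Stein-ness of $B'$.
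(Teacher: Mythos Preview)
Your treatment of parts $(ii)$ and $(iii)$ is correct and is exactly the paper's argument: invoke Theorem~\ref{thm:main}, translate the Diophantine condition via the identification $N_{Y_\theta/X_\theta}\cong F(\theta)$, and use that $X_\theta$ is rational so $H^1(X_\theta,\mathcal{O}_{X_\theta})=0$.

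For part $(i)$ your Leray argument gives the infinite-dimensionality without difficulty, but the Hausdorffness step is where the real content lies, and your sketch does not close the gap. The Leray isomorphism $H^1(M_\theta,\mathcal{O}_{M_\theta})\cong H^0(B',R^1f_*\mathcal{O})$ is a priori only linear; knowing that the target is a Fr\'echet space says nothing about whether $B^{0,1}(M_\theta)$ is closed in $Z^{0,1}(M_\theta)$. The paper's Lemma~\ref{lem:tor_case_H^1_spectral_seq}~$(ii)$ upgrades a similar isomorphism to a homeomorphism, but only under the hypotheses that $f$ is a submersion over the base and that $H^2$ of the base vanishes; the fibration $f|_{M_\theta}$ will in general have singular fibers, so you cannot apply that lemma directly to $M_\theta$, and your proposed fix (``restricting to a neighborhood of each fiber'') does not address the global topological comparison.

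The paper avoids this altogether. It packages the torsion case into Proposition~\ref{prop:main_for_prelim_tor_case}, whose part $(ii)$ bounds the closure of $\{0\}$ in $H^1(M,\mathcal{O}_M)$ by $\dim H^1(X,\mathcal{O}_X)$ via the Mayer--Vietoris sequence for $\{M,V_1,\dots,V_N\}$ (applying Lemma~\ref{lem:tor_case_H^1_spectral_seq}~$(ii)$ only to the small punctured tubes $V_\nu^*$, where the submersion hypothesis does hold). Since you have already computed $H^1(X_\theta,\mathcal{O}_{X_\theta})=0$, this immediately gives Hausdorffness in case $(i)$ without any direct analysis of the topology on $H^1(M_\theta,\mathcal{O}_{M_\theta})$. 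That is the route you should take.
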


See \S \ref{section:ex_of_blp2at9pts} for the details of Example \ref{eg:blP2at9pts_sp_by_KoTLS}, in which the $\ddbar$-problem (i.e. whether the $\ddbar$-lemma holds) on $M_\theta$ is also investigated (Proposition \ref{prop:ddbar_lem_for_Bl9ptsP2}). 
Note that the $\ddbar$-problem on toroidal groups is investigated in \cite{KT1}. See also \cite{KT2}. 

In the proof of Theorem \ref{thm:main}, we will use tubular neighborhoods $V_\nu$ of $Y_\nu$ such that $[Y_\nu]|_{V_\nu}$ is unitary flat for each $\nu$, whose existence is assured by the semi-positivity assumption of $K_X^{-1}=[Y_1+Y_2+\cdots+Y_N]$ and \cite{Ko2020}, \cite[Remark 5.2]{O} (See also \cite{Ko}). 
By considering the Mayer--Vietoris sequence for the open covering $\{M, V_1, V_2, \dots, V_N\}$ of $X$, the proof is reduced to studying $H^1(V_\nu, \mathcal{O}_{V_\nu})$, $H^1(V_\nu^*, \mathcal{O}_{V_\nu^*})$, and the natural map between them, where $V_\nu^*:=V_\nu\cap M (=V_\nu\setminus Y_\nu)$. 
We will investigate them by focusing on the sheaf of holomorphic functions which are constant along the leaves of a holomorphic foliation on $V_\nu$. 

The organization of the paper is as follows. 
\S 2 is a preliminary section. 
In \S \ref{section:prelim_nbhd_and_compl_of_Y}, we will first study the cases where $N_{Y_\nu/X}$ is positive or torsion for some $\nu$. 
In accordance with the strategy of the proof of Theorem \ref{thm:main} as mentioned above, 
a suitable neighborhood $V$ of an elliptic curve $Y$ will be investigated in most part of this paper. 
The precise configuration and notation will be fixed in \S \ref{section:kigou_settei}. 
In \S \ref{section:ueda_lemma_const}, some fundamental facts concerning on Ueda-type estimate for the solution of the \v{C}ech coboudary map, which plays an essential role for connecting irrational number theoretical properties of $N_{Y/X}$ and the complex analytical properties of $V$, are collected. 
\S 3, 4, and 5 will be devoted to the study of $V$: 
Complex analytical properties of $V$ will be investigated mainly by using the assumption that $[Y]$ is an anticanonical divisor in \S 3, 
by applying the dynamical properties of the holomorphic foliation of $V$ in \S 4, 
and by combining the results in \S 3 and 4 in \S 5. 
As an application of the results in \S 5, we show Theorem \ref{thm:main} in \S 6. 
\S 7 is the section for some examples: we will observe toroidal groups of dimension $2$ in \S 7.1, and Example \ref{eg:blP2at9pts_sp_by_KoTLS} in \S 7.2. 

\vskip3mm
{\bf Acknowledgement.} 
The author is grateful to Professor Shigeharu Takayama for his comments on Question \ref{question_hodge_decomp_from_takayamasensei} in \S 7.2. 
He is supported by JSPS KAKENHI Grant Number 23K03119. 

\section{Preliminaries}

\subsection{Fundamental results on the complement of a semi-positive anticanonical divisor}\label{section:prelim_nbhd_and_compl_of_Y}

In this subsection we let $X$ be a connected compact K\"ahler surface such that $K_X^{-1}$ is semi-positive. 
Let $Y$ be a non-singular hypersurface of $X$ such that $D=Y_1+Y_2+\cdots +Y_N$ is an anticanonical divisor, where $\{Y_\nu\}_{\nu=1}^N$ is the set of all the connected components of $Y$. 
First let us show the following: 
\begin{lemma}\label{lem:first_lem_in_prelim}
For a connected compact K\"ahler surface $X$ and a hypersurface $Y=Y_1\cup Y_2\cup \cdots\cup Y_N$ as above, the following holds: \\
$(i)$ Each $Y_\nu$ is an elliptic curve with ${\rm deg}\,N_{Y_\nu/X}\geq 0$. \\
$(ii)$ It holds that 
\[
{\rm dim}\,H^q(X, \mathcal{O}_X)=\begin{cases}
1 & {\rm if}\ q=0\\
1-\frac{1}{12}\left(e(X)+\sum_{\nu=1}^N{\rm deg}\,N_{Y_\nu/X}\right) & {\rm if}\ q=1\\
0 & {\rm if}\ q\geq 2
\end{cases}, 
\]
where $e(X)$ is the Euler number of $X$. \\
$(iii)$ $H^q(M, \mathcal{O}_M)=0$ holds for any $q\geq 2$, where $M=X\setminus Y$. 
\end{lemma}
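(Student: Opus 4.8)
The plan is to treat the three parts essentially independently, with part (i) feeding into the computations in (ii) and (iii). For part (i), I would start from the adjunction formula: since $D = Y_1 + \cdots + Y_N \in |K_X^{-1}|$ and each $Y_\nu$ is a connected component of a non-singular hypersurface, the $Y_\nu$ are pairwise disjoint, so $[Y_\nu]|_{Y_\nu} = N_{Y_\nu/X}$ and $K_X|_{Y_\nu} = [-Y_\nu]|_{Y_\nu}$ after restriction (the other components do not meet $Y_\nu$). Adjunction then gives $K_{Y_\nu} = (K_X \otimes [Y_\nu])|_{Y_\nu} = \mathcal{O}_{Y_\nu}$, so each $Y_\nu$ is a smooth compact curve with trivial canonical bundle, i.e. an elliptic curve. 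The inequality $\deg N_{Y_\nu/X} \geq 0$ is already observed in the introduction: it equals $\int_{Y_\nu} \frac{\sqrt{-1}}{2\pi}\Theta_h$ for the semi-positively curved metric $h$ on $K_X^{-1}$, hence is non-negative. (If one does not want to invoke semi-positivity here, one can instead note that by Hodge index / the discussion in \S\ref{section:prelim_case_positive}, or simply because $K_X^{-1}$ is effective and nef under the hypotheses in play, $\deg N_{Y_\nu/X} \geq 0$; but the curvature argument is cleanest.)

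For part (ii), the value $\dim H^0(X,\mathcal{O}_X) = 1$ is just connectedness of $X$. For $q \geq 2$ the vanishing $H^q(X,\mathcal{O}_X) = 0$ holds trivially since $X$ is a surface ($\dim_{\mathbb{C}} X = 2$ forces $H^q = 0$ for $q > 2$, and $H^2(X,\mathcal{O}_X) \cong H^0(X, K_X)$ by Serre duality, which vanishes because $K_X^{-1}$ is effective and non-trivial — indeed $h^0(X,K_X)\cdot h^0(X,K_X^{-1}) > 0$ would force $K_X$ trivial, contradicting $\deg N_{Y_\nu/X}$ not all being forced to a contradiction; more simply, a non-trivial effective $K_X^{-1}$ cannot coexist with a non-trivial effective $K_X$). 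The $q = 1$ formula is Noether's formula combined with the computation of $K_X^2$. Explicitly, Noether's formula reads $\chi(\mathcal{O}_X) = \frac{1}{12}(K_X^2 + e(X))$, so $1 - h^1(X,\mathcal{O}_X) + 0 = \frac{1}{12}(K_X^2 + e(X))$, i.e. $h^1(X,\mathcal{O}_X) = 1 - \frac{1}{12}(K_X^2 + e(X))$. It remains to identify $K_X^2$ with $-\sum_\nu \deg N_{Y_\nu/X}$: since $[K_X] = [-D]$, we get $K_X^2 = D^2 = \sum_{\mu,\nu} Y_\mu \cdot Y_\nu = \sum_\nu Y_\nu^2$ (the cross terms vanish since the $Y_\nu$ are disjoint), and $Y_\nu^2 = \deg([Y_\nu]|_{Y_\nu}) = \deg N_{Y_\nu/X}$; but $D = -K_X$, so actually $K_X^2 = (-D)^2 = D^2 = \sum_\nu \deg N_{Y_\nu/X}$ — wait, sign: $K_X^2 = (-D)\cdot(-D) = D\cdot D = \sum_\nu \deg N_{Y_\nu/X} \geq 0$. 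Hmm, that gives $h^1 = 1 - \frac{1}{12}(e(X) + \sum_\nu \deg N_{Y_\nu/X})$, which matches the claimed formula exactly. So the only subtlety is getting the adjunction/intersection bookkeeping right; the disjointness of the $Y_\nu$ (as distinct connected components) is what kills the cross terms.

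For part (iii), the goal is $H^q(M,\mathcal{O}_M) = 0$ for $q \geq 2$. Since $M$ is a non-compact complex surface ($\dim_{\mathbb{C}} M = 2$), cohomology in degree $q > 2$ vanishes for dimension reasons, so only $q = 2$ needs argument. Here I would use that $M = X \setminus Y$ is weakly $1$-complete (as recalled in the introduction, from \cite{Ko2020} and \cite[Remark 5.2]{O}): on a weakly $1$-complete manifold of dimension $n$, the Nakano/Ohsawa vanishing theorem (or the original Andreotti–Grauert type result, e.g. via the exhaustion by the potential) gives $H^q(M, \mathcal{O}_M) = 0$ for $q \geq n$ — wait, weakly $1$-complete only gives vanishing in top degree $q = n$ in general with appropriate positivity, not automatically for $\mathcal{O}_M$. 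Let me reconsider: the cleanest route is that $M$ admits a smooth plurisubharmonic exhaustion and, more strongly, from the structure results $M$ carries enough positivity (or one uses that $M$ is covered by the tubular neighborhoods and the core — indeed via the Mayer–Vietoris strategy outlined in the introduction, $H^2(M,\mathcal{O}_M)$ injects into a sum of $H^2$'s of the pieces, each of which is a neighborhood of an elliptic curve in a surface, hence has vanishing $H^2$ of $\mathcal{O}$ by weak $1$-completeness of such neighborhoods — Takegoshi/Ohsawa). Concretely: by \cite[Remark 5.2]{O} $M$ is weakly $1$-complete, and by the Ohsawa–Takegoshi vanishing theorem for weakly $1$-complete manifolds (which applies to $\mathcal{O}_M = K_M \otimes K_M^{-1}$ once one checks the relevant semi-positivity of $K_M^{-1}$ on $M$, guaranteed again by the semi-positivity of $K_X^{-1}$ and the fact that $Y$ is anticanonical so $K_M^{-1} = K_X^{-1}|_M \otimes [Y]|_M$ is trivialized appropriately), $H^2(M, \mathcal{O}_M) = 0$.

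\medskip

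The main obstacle I anticipate is part (iii), specifically pinning down exactly which vanishing theorem applies and verifying its hypotheses on $M$: weak $1$-completeness alone does not give $H^{n}(M,\mathcal{O}_M) = 0$ without some curvature input, and one must exploit that $Y$ is anticanonical (so that $K_M^{-1}$ is, up to the line bundle $[Y]|_M$ which is flat on a neighborhood of $Y$, semi-positive) to feed into an Ohsawa–Takegoshi or Nakano-type vanishing statement. The cleanest packaging may well be the Mayer–Vietoris reduction already announced in the introduction: once $H^2$ of each local piece $V_\nu$ and of the relevant overlaps is shown to vanish, $H^2(M,\mathcal{O}_M) = 0$ follows formally, and the local vanishing for a weakly $1$-complete neighborhood of an elliptic curve is standard. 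Parts (i) and (ii) are routine once the disjointness of the components is used to simplify adjunction and intersection numbers, and Noether's formula is invoked.
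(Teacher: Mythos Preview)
Your treatment of parts $(i)$ and $(ii)$ is correct and essentially identical to the paper's: adjunction gives $K_{Y_\nu}\cong\mathcal{O}_{Y_\nu}$, semi-positivity of $K_X^{-1}$ gives $\deg N_{Y_\nu/X}\geq 0$, Serre duality plus the existence of the effective anticanonical divisor $D$ kills $H^2(X,\mathcal{O}_X)$, and Noether's formula together with $K_X^2=D^2=\sum_\nu \deg N_{Y_\nu/X}$ handles $q=1$. (Your sign hesitation is unfounded; your final formula is right.)

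Part $(iii)$ is where your plan has a genuine gap. You correctly sense that weak $1$-completeness alone does not force $H^2(M,\mathcal{O}_M)=0$, and your two proposed workarounds do not close the argument. The Ohsawa--Takegoshi/Nakano route would require you to name a specific vanishing theorem for $H^n$ of $K_M\otimes L$ on a weakly $1$-complete $n$-fold with $L$ merely semi-positive, and you do not; the Mayer--Vietoris route only shifts the problem, since you would then need $H^2(V_\nu^*,\mathcal{O}_{V_\nu^*})=0$ for the punctured neighborhoods $V_\nu^*$, which is the same type of statement you are trying to prove (and ``standard'' is not a proof).

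The paper's argument is much shorter and uses a fact you did not invoke: by Ohsawa's theorem \cite{O1984}, every noncompact complex surface is strongly $2$-complete. Since $M=X\setminus Y$ is a noncompact surface, it is strongly $2$-complete, and then the Andreotti--Grauert vanishing theorem (see \cite[p.~419~(4.1)]{D}, or \cite{AG}) immediately gives $H^q(M,\mathcal{O}_M)=0$ for all $q\geq 2$. No curvature input on $K_M^{-1}$, no Mayer--Vietoris bookkeeping, and no special structure of the neighborhoods $V_\nu$ is needed for this step.
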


\begin{proof}
As ${\rm deg}\,K_{Y_\nu}={\rm deg}\,(K_X\otimes [Y_\nu])|_{Y_\nu}=0$ follows from the adjunction formula, $(i)$ holds (As is mentioned in \S 1, the semi-positivity of the normal bundle directly follows from the assumption that $[Y_1+Y_2+\cdots+Y_N]$ is semi-positive). 
As $X$ is compact, $H^0(X, \mathcal{O}_X)=\mathbb{C}$. By the Serre duality, it holds that 
\[
{\rm dim}\,H^2(X, \mathcal{O}_X)
={\rm dim}\,H^{2, 2}(X, [D])
={\rm dim}\,H^0(X, \mathcal{O}_X([-D]))=0
\]
since $D=Y_1+Y_2+\cdots +Y_N$ is an effective anticanonical divisor. 
Therefore $(ii)$ follows from Noether's formula. 
The assertion $(iii)$ follows from \cite[p. 419 (4.1)]{D}, since $M$ is strongly $2$-complete by Ohsawa's theorem \cite{O1984} (See also \cite[p. 417 (3.6)]{D}). 
\end{proof}

\subsubsection{The case where ${\rm deg}\,N_{Y_\nu/X}>0$ for some $\nu$}\label{section:prelim_case_positive}

First let us consider the case where ${\rm deg}\,N_{Y_\nu/X}>0$ for some $\nu$. 
Without loss of generality, we may assume that ${\rm deg}\,N_{Y_1/X}>0$. 
In this case, one has the following: 
\begin{lemma}\label{lemma:firstone_in_prelim_case_positive}
When ${\rm deg}\,N_{Y_1/X}>0$, it holds that $N=1$ (i.e. $Y=Y_1$ is connected) and that the complement $M=X\setminus Y_1$ is strongly $1$-convex. 
Especially $H^1(M, \mathcal{O}_M)$ is of finite dimension. 
\end{lemma}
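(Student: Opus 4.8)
The plan is to prove the two assertions separately --- that $Y$ is connected (i.e. $N=1$) and that $M=X\setminus Y_1$ is strongly $1$-convex --- after which $\dim H^1(M,\mathcal O_M)<\infty$ follows from Grauert's finiteness theorem for strongly $1$-convex manifolds (Lemma~\ref{lem:first_lem_in_prelim}~$(iii)$ already covering the vanishing in degrees $\geq 2$). For $N=1$ I would argue by intersection theory on the K\"ahler surface $X$. From the adjunction formula together with $\deg K_{Y_\nu}=0$ (Lemma~\ref{lem:first_lem_in_prelim}~$(i)$) one has $K_X\cdot Y_\nu=-Y_\nu^2$, and intersecting the identity $-K_X=Y_1+Y_2+\cdots+Y_N$ with $Y_\nu$ gives
\[
Y_\nu^2=-K_X\cdot Y_\nu=Y_\nu^2+\sum_{\mu\neq\nu}Y_\mu\cdot Y_\nu ,
\]
so $Y_\mu\cdot Y_\nu=0$ for every $\mu\neq\nu$, each summand being non-negative as the $Y_\mu$ are distinct irreducible curves. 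Now suppose $N\geq 2$ and, after relabelling, $Y_1^2=\deg N_{Y_1/X}>0$. Then $[Y_2]$ lies in the orthogonal complement of $[Y_1]$ in $H^{1,1}(X,\mathbb R)$, on which the intersection form is negative definite by the Hodge index theorem; but $[Y_2]^2=\deg N_{Y_2/X}\geq 0$ forces $[Y_2]=0$ in $H^{1,1}(X,\mathbb R)$, contradicting $\int_{Y_2}\omega>0$ for a K\"ahler form $\omega$. Hence $N=1$.

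For the strong $1$-convexity of $M=X\setminus Y_1$ one may simply invoke \cite[Proposition~(2.2)]{S}; alternatively I would argue as follows. The semi-positively curved metric $h$ on $K_X^{-1}$ shows that $-K_X$ is nef, and $(-K_X)^2=Y_1^2>0$, so $-K_X$ is nef and big; hence $X$ is Moishezon, and being K\"ahler it is projective, so by the base-point-free theorem $\lvert -mK_X\rvert$ is base-point-free for some $m\geq 1$. The associated birational morphism $\phi\colon X\to Z:=\phi(X)\subset\mathbb P^N$ onto a normal projective surface contracts precisely the finitely many irreducible curves $C$ with $Y_1\cdot C=0$; none of these equals $Y_1$ (as $Y_1^2>0$), so each is disjoint from $Y_1$ and hence lies in $M$, and their union $E$ is a compact analytic subset of $M$. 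Since $\phi^{*}\mathcal O_Z(1)\cong\mathcal O_X(mY_1)$ we get $\phi^{-1}(H)=Y_1$ for a hyperplane section $H\subset Z$, whence $M=\phi^{-1}(Z\setminus H)$; as $Z\setminus H$ is the complement of an ample divisor in a projective variety it is Stein, and $\phi|_M\colon M\to Z\setminus H$ is a proper birational morphism, so $M$ is a proper modification of a Stein space at finitely many points, i.e. strongly $1$-convex. Concretely, letting $h_{FS}$ be the Fubini--Study metric on $\mathcal O_Z(1)$, the pulled-back metric $\phi^{*}h_{FS}$ on $\mathcal O_X(mY_1)$ has semi-positive curvature, strictly positive off $E$, and writing $\phi^{*}h_{FS}=h^{\otimes m}e^{-u}$ with $u\in C^\infty(X)$ and letting $s$ be the section of $[Y_1]$ cutting out $Y_1$, the function $\Phi:=-\log|s^{\otimes m}|^2_{\phi^{*}h_{FS}}$ is a smooth proper exhaustion of $M$ with $\sqrt{-1}\partial\bar\partial\Phi=\Theta_{\phi^{*}h_{FS}}\geq 0$ there, strictly plurisubharmonic away from $E$. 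Grauert's finiteness theorem then gives $\dim H^q(M,\mathcal O_M)<\infty$ for all $q\geq 1$, in particular for $q=1$.

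The intersection-theoretic step for $N=1$, and the observation that the curves contracted by $\phi$ are disjoint from $Y_1$ (being orthogonal to the class $[Y_1]$, which has positive square), are routine. The step I expect to require the most care is the one turning the numerical positivity $(-K_X)^2>0$ and nefness of $-K_X$ into an actual proper holomorphic map onto a Stein space --- i.e. the semi-ampleness of $-K_X$, obtained here from projectivity of $X$ and the base-point-free theorem, or alternatively absorbed into the cited \cite[Proposition~(2.2)]{S}.
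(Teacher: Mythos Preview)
Your proof is correct. The argument for $N=1$ is essentially the paper's: both rest on the Hodge index theorem and the vanishing $Y_1\cdot Y_2=0$, the latter being immediate since the $Y_\nu$ are distinct connected components of the hypersurface $Y$ (so your adjunction computation, while correct, is not needed).

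For strong $1$-convexity you take a genuinely different route. The paper stays purely local-analytic: it quotes \cite[Proposition~(2.2)]{S} to obtain a $C^\infty$ strictly plurisubharmonic function $\varphi$ on a punctured tubular neighborhood $V_1'\setminus Y_1$ with $\varphi\to\infty$ along $Y_1$, and then extends $\varphi$ to a global exhaustion on $M$ by composing with a non-decreasing convex function $\chi$ that is constant below a threshold and the identity above. Your alternative passes through birational geometry: from semi-positivity and $(-K_X)^2=Y_1^2>0$ you deduce that $X$ is projective, invoke the base-point-free theorem to produce a birational contraction $\phi\colon X\to Z$ with $\phi^*\mathcal O_Z(1)\cong\mathcal O_X(mY_1)$, observe that every contracted curve satisfies $Y_1\cdot C=0$ and hence is disjoint from $Y_1$, and realize $M$ as a proper modification of the affine variety $Z\setminus H$. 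This gives more global structure --- one sees the Remmert reduction of $M$ explicitly and identifies its exceptional locus as the finite union of curves orthogonal to $[Y_1]$ --- at the price of invoking the base-point-free theorem, which is considerably heavier than Suzuki's local result. Two small remarks: your phrase ``simply invoke \cite[Proposition~(2.2)]{S}'' is a bit quick, since Suzuki supplies only the local function near $Y_1$ and one still needs the cutoff-and-glue step the paper carries out; and to ensure $\phi$ is birational onto a normal $Z$ one should take $m$ large or pass to the Stein factorization, which is standard.
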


\begin{proof}
Assume that $N\geq 2$. Then, as $Y_1\cap Y_2=\emptyset$, it follows from Lemma \ref{lem:first_lem_in_prelim} $(i)$ and Hodge index theorem that the first Chern class $c_1([Y_2])$ of $[Y_2]$ is zero. Thus $\int_{Y_2}\omega=0$ holds for any K\"ahler form $\omega$ on $X$, which leads to the contradiction. 

Now the proof is reduced to showing that $M$ is strongly $1$-convex, 
since then the finite-dimensionality of $H^1(M, \mathcal{O}_M)$ follows from 
Andreotti--Grauert's theorem \cite{AG} (See also \cite[p. 425 (4.10)]{D}). 
We will show this by constructing a $C^\infty$ psh (plurisubharmonic) exhaustion function $\psi\colon M\to \mathbb{R}$ such that $\psi|_{V_1\setminus Y_1}$ is spsh (strictly plurisubharmonic) for a neighborhood $V_1$ of $Y_1$. 
By \cite[Proposition (2.2)]{S}, there exists a tubular neighborhood $V_1'$ of $Y_1$ in $X$ and a $C^\infty$ spsh function $\vp\colon V_1'\setminus Y_1\to \mathbb{R}$ such that $\vp(x)\to \infty$ as a point $x\in V_1'\setminus Y_1$ approaches to $Y_1$. 
As the boundary $\del V_1'$ of $V_1'$ is compact, we may assume (by shrinking $V_1'$ if necessary) that $\vp<M$ holds on a neighborhood of $\del V_1'$ for some $M>0$. 
Take a $C^\infty$ non-decreasing convex function $\chi\colon \mathbb{R}\to \mathbb{R}$ such that $\chi(t)=t$ for $t>>1$ and $\chi|_{(-\infty, M]}\equiv c$ holds for some constant $c$. Then the function $\psi\colon M\to \mathbb{R}$ defined by 
\[
\psi(x) = \begin{cases}
\chi\circ\vp(x) & \text{if}\ x\in V_1'\setminus Y_1\\
c & \text{if}\ x\in M\setminus V_1'
\end{cases}
\]
enjoys the required condition. 
\end{proof}

\subsubsection{The case where $N_{Y_\nu/X}$ is torsion for some $\nu$}\label{section:prelim_case_torsion_fibrartion}

Next let us consider the case where $N_{Y_\nu/X}$ is a torsion element of ${\rm Pic}^0(Y_\nu)$ for some $\nu$. 
In this case, by using \cite[Theorem 1.1 (i)]{Ko2020}, we have the following: 
\begin{proposition}\label{prop:existence_of_fibration_f_when_N_tor}
When $N_{Y_\nu/X}$ is a torsion element of ${\rm Pic}^0(Y_\nu)$ for some $\nu$, there exists an elliptic fibration $f\colon X\to B$ onto a compact Riemann surface $B$ such that each $Y_\mu$ is a fiber of $f$. 
Especially, $N_{Y_\mu/X}$ is a torsion element of ${\rm Pic}^0(Y_\mu)$ for any $\mu\in\{1, 2, \dots, N\}$ in this case. 
\end{proposition}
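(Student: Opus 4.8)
The plan is to deduce Proposition \ref{prop:existence_of_fibration_f_when_N_tor} directly from \cite[Theorem 1.1 (i)]{Ko2020}, which (under the semi-positivity of $K_X^{-1}=[Y_1+\cdots+Y_N]$ and the hypothesis that some $N_{Y_\nu/X}$ is torsion in ${\rm Pic}^0(Y_\nu)$) produces an elliptic fibration $f\colon X\to B$ onto a compact Riemann surface for which the component $Y_\nu$ is a fiber. The content that remains is to upgrade ``$Y_\nu$ is a fiber'' to ``every $Y_\mu$ is a fiber''. First I would fix such an $f$ and such a $\nu$, say $\nu=1$, with $Y_1=f^{-1}(b_1)$ for some $b_1\in B$.

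Next I would argue component by component. Fix $\mu\neq 1$; since $Y$ is non-singular and $\{Y_\mu\}$ are its connected components, $Y_\mu\cap Y_1=\emptyset$, and by Lemma \ref{lem:first_lem_in_prelim} $(i)$ each $Y_\mu$ is an elliptic curve with ${\rm deg}\,N_{Y_\mu/X}\ge 0$. The idea is to look at the restriction $f|_{Y_\mu}\colon Y_\mu\to B$. Either it is constant, in which case $Y_\mu$ is contained in a single fiber $f^{-1}(b_\mu)$; or it is a non-constant holomorphic map from the elliptic curve $Y_\mu$ onto $B$, forcing $B$ to be of genus $0$ or $1$. I would rule out the latter by a numerical/intersection argument: if $f|_{Y_\mu}$ were surjective, then $Y_\mu$ would meet the fiber $Y_1=f^{-1}(b_1)$, contradicting $Y_\mu\cap Y_1=\emptyset$ (here one uses that distinct fibers are disjoint and that a dominant $f|_{Y_\mu}$ hits every fiber). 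Hence $f|_{Y_\mu}$ is constant and $Y_\mu\subset f^{-1}(b_\mu)$ for some $b_\mu\in B$.

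It then remains to show $Y_\mu$ is \emph{all} of the fiber $f^{-1}(b_\mu)$, not merely a component of it. Since $Y_\mu$ is a smooth elliptic curve and ${\rm deg}\,N_{Y_\mu/X}\ge 0$, while a general fiber $F$ of $f$ satisfies $F^2=0$, I would compare $Y_\mu^2$ with $F^2=0$: the class $[Y_\mu]$ is supported on the fiber over $b_\mu$, so $[Y_\mu]\cdot F=0$ for a general fiber $F$, and then $Y_\mu^2\le 0$ by the structure of the intersection form on the components of a fiber (negative semi-definite, with kernel generated by the full fiber class). Combined with ${\rm deg}\,N_{Y_\mu/X}=Y_\mu^2\ge 0$ this gives $Y_\mu^2=0$, which by the same negative-semi-definiteness forces $[Y_\mu]$ to be proportional to the full fiber class; since $Y_\mu$ is a reduced connected curve this means $Y_\mu=f^{-1}(b_\mu)$ as reduced curves (the fiber over $b_\mu$ is itself a smooth elliptic curve). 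Finally, $N_{Y_\mu/X}=[Y_\mu]|_{Y_\mu}$ is then the normal bundle of a fiber, hence restricted to the fiber is topologically trivial; because the fibration $f$ already forces $N_{Y_1/X}$ (and hence, via the torsion/fibration picture of \cite{Ko2020}, each $N_{Y_\mu/X}$) to be torsion in ${\rm Pic}^0(Y_\mu)$, one concludes the ``especially'' clause. Here one may instead simply invoke that the normal bundle of a fiber of an elliptic fibration is a torsion point of ${\rm Pic}^0$ of that fiber, which follows from $mf^{-1}(b_\mu)$ moving in a pencil for a suitable $m$ when $B$ is appropriate, or more cleanly from the fact that $\mathcal{O}_X(f^{-1}(b_\mu)-f^{-1}(b_1))$ is trivial on a neighborhood of the fibers after passing to a multiple.

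The main obstacle I expect is the last step: carefully controlling the fiber structure to conclude $Y_\mu=f^{-1}(b_\mu)$ and that $N_{Y_\mu/X}$ is torsion, since a priori the fiber over $b_\mu$ could be singular or multiple and $Y_\mu$ only one of its components. The cleanest route is probably to avoid Kodaira's fiber classification and instead argue purely on the level of line bundles: the semi-positivity of $K_X^{-1}$ together with \cite{Ko2020} (and \cite[Remark 5.2]{O}) pins down the flatness/torsion behavior of $[Y_\mu]$ near $Y_\mu$, and the existence of $f$ with $Y_1$ as a fiber gives a linear equivalence relating the $Y_\mu$'s to fibers; chasing this through should deliver both that each $Y_\mu$ is a full fiber and that $N_{Y_\mu/X}$ is torsion, uniformly in $\mu$. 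I would present the numerical argument (Hodge index / negative semi-definiteness on fibers) as the backbone and cite \cite[Theorem 1.1 (i)]{Ko2020} for everything about the existence and torsion properties of $f$.
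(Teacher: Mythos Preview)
Your approach matches the paper's: cite \cite[Theorem 1.1 (i)]{Ko2020} to obtain the fibration with $Y_1$ as a fiber, use disjointness from $Y_1$ to force each remaining $Y_\mu$ into a single fiber, and then deduce torsion of $N_{Y_\mu/X}$ from the fiber structure. A few points of comparison and correction follow.

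The hypothesis of \cite[Theorem 1.1 (i)]{Ko2020} is semi-positivity of $[Y_1]$, not of $K_X^{-1}=[Y_1+\cdots+Y_N]$; the paper inserts Lemma~\ref{lem:semipositivity_of_each_Y_nu} to pass from the latter to the former, and you should invoke it as well. Also, \cite{Ko2020} yields only a proper holomorphic surjection, not a priori an elliptic fibration; the paper checks ellipticity separately at the end via adjunction on a general fiber.

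For the torsion step the paper is more direct than your sketch: writing $f^*\{f(Y_\mu)\}=m_\mu Y_\mu$, the standard argument (cited as \cite[p.~189]{S}) gives $N_{Y_\mu/X}^{m_\mu}\cong\mathcal{O}_{Y_\mu}$, since $[m_\mu Y_\mu]|_{Y_\mu}=(f|_{Y_\mu})^*\mathcal{O}_B(f(Y_\mu))$ and $f|_{Y_\mu}$ is constant. Your Zariski-lemma argument that $Y_\mu$ is the full reduced fiber is correct and in fact justifies a step the paper takes for granted; however, your parenthetical ``the fiber over $b_\mu$ is itself a smooth elliptic curve'' is false when $m_\mu>1$ (the scheme-theoretic fiber is non-reduced), and your first attempt at the torsion conclusion (``via the torsion/fibration picture of \cite{Ko2020}'') is circular. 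Replace both by the multiplicity argument above.
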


\begin{proof}
Without loss of generality, we may assume that $N_{Y_1/X}$ is torsion. 
In this case, it follows from Lemma \ref{lem:semipositivity_of_each_Y_nu} below and \cite[Theorem 1.1 $(i)$]{Ko2020} that there exists a proper holomorphic surjection $f\colon X\to B$ onto a compact Riemann surface $B$ such that $Y_1$ is a fiber of $f$. 
Note that we may assume $f$ has connected fibers by considering Stein factorization. 
As we are assuming that $X$ is connected, $B$ is also connected. Thus, for each $\mu$, the image $f(Y_\mu)$ is either a point or $B$. If $f(Y_\mu)=B$ for some $\mu>1$, then it holds that $Y_1\cap Y_\mu\not=\emptyset$, which leads to the contradiction. Therefore $Y_\mu$ is a fiber of $f$. 
Denote by $m_\mu$ the multiplicity of the fiber $Y_\mu$: i.e. $m_\mu$ is a positive integer such that $f^*\{f(Y_\mu)\}=m_\mu Y_\mu$ holds as divisors. 
Then it follows from a standard argument that $N_{Y_\mu/X}^{m_\mu}$ is holomorphically trivial (see also \cite[p. 189]{S}). 
For a general fiber $F\subset X$ of $f$, it follows from the adjunction formula that $K_F$ is holomorphically trivial, since both $N_{F/X}$ and $K_X|_F$ is holomorphically trivial. Therefore $F$ is an elliptic curve, from which the assertion follows. 
\end{proof}

\begin{lemma}\label{lem:semipositivity_of_each_Y_nu}
Let $X'$ be a compact complex manifold and $Y'$ be a compact non-singular hypersurface of $X'$. 
Assume that $[Y_1'+Y_2'+\cdots+Y_{N'}']$ is semi-positive, 
where $\{Y_\nu'\}_{\nu=1}^{N'}$ is the set of all the connected components of $Y'$. 
Then $[Y_\nu']$ is semi-positive and $X'\setminus Y_\nu'$ is weakly $1$-complete for each $\nu\in\{1, 2, \dots, N'\}$. 
\end{lemma}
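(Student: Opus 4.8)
The plan is to reduce the whole assertion to the semi-positivity of $[Y_\nu']$. Granting that $[Y_\nu']$ carries a $C^\infty$ Hermitian metric $h_\nu$ with $\sqrt{-1}\Theta_{h_\nu}\geq 0$, let $s_\nu\in H^0(X',[Y_\nu'])$ be the canonical section with ${\rm div}\,s_\nu=Y_\nu'$. Then $\varphi_\nu:=-\log|s_\nu|^2_{h_\nu}$ is $C^\infty$ on $X'\setminus Y_\nu'$ with $\sqrt{-1}\ddbar\varphi_\nu=\sqrt{-1}\Theta_{h_\nu}\geq 0$ there by the Poincar\'e--Lelong formula, and for each $c\in\mathbb{R}$ its sublevel set equals $\{x\in X'\mid |s_\nu|^2_{h_\nu}(x)\geq e^{-c}\}$, which is closed in the compact manifold $X'$ and disjoint from $Y_\nu'$, hence compact in $X'\setminus Y_\nu'$; these sets exhaust $X'\setminus Y_\nu'$ as $c\to+\infty$. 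Thus $X'\setminus Y_\nu'$ is weakly $1$-complete, and only the construction of such an $h_\nu$ remains.

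First I would fix $\nu$ and set $Z:=\bigcup_{\mu\neq\nu}Y_\mu'$; since the $Y_\mu'$ are distinct connected components of $Y'$, they are pairwise disjoint, so $Y_\nu'\cap Z=\emptyset$ and $[Y']=[Y_\nu']\otimes[Z]$. Fix a $C^\infty$ metric $h$ on $[Y']$ with $\sqrt{-1}\Theta_h\geq 0$ together with auxiliary $C^\infty$ metrics $g_\nu$ on $[Y_\nu']$ and $g_Z$ on $[Z]$; writing $h=e^{-\varphi}\,g_\nu\otimes g_Z$ with $\varphi\in C^\infty(X')$, the hypothesis reads $\sqrt{-1}\Theta_{g_\nu}+\sqrt{-1}\Theta_{g_Z}+\sqrt{-1}\ddbar\varphi\geq 0$ on $X'$. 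Choose an open neighborhood $W$ of $Y_\nu'$ with $\overline{W}\cap Z=\emptyset$ (possible since $Y_\nu'$ and $Z$ are disjoint compact sets). On $W$ the canonical section $s_Z$ of $[Z]$ is nowhere zero, so $\sqrt{-1}\Theta_{g_Z}=-\sqrt{-1}\ddbar\log|s_Z|^2_{g_Z}$ there, whence
\[
\sqrt{-1}\Theta_{g_\nu}+\sqrt{-1}\ddbar\bigl(\varphi-\log|s_Z|^2_{g_Z}\bigr)\geq 0\qquad\text{on }W.
\]
As $f:=\varphi-\log|s_Z|^2_{g_Z}$ is a bounded $C^\infty$ function on $\overline{W}$, this says $e^{-f}g_\nu$ is a genuine (non-degenerate) $C^\infty$ metric on $[Y_\nu']|_W$ with semi-positive curvature. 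Away from $Y_\nu'$ the canonical section $s_\nu$ is nowhere zero, so over $X'\setminus Y_\nu'$ the line bundle $[Y_\nu']$ carries the flat metric $|s_\nu|^{-2}_{g_\nu}\,g_\nu$, whose weight relative to $g_\nu$ is $g:=\log|s_\nu|^2_{g_\nu}$, a function tending to $-\infty$ along $Y_\nu'$.

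The hard part will be to glue $e^{-f}g_\nu$ (on $W$) and the flat metric $|s_\nu|^{-2}_{g_\nu}\,g_\nu$ (on $X'\setminus Y_\nu'$) into a single $C^\infty$ metric on $[Y_\nu']$ over all of $X'$ keeping $\sqrt{-1}\Theta\geq 0$. To do this I would fix an open neighborhood $W'$ of $Y_\nu'$ with $\overline{W'}\subset W$; since $g$ is bounded above on the compact set $\overline{W}\setminus W'$ and $f$ is bounded on $\overline{W}$, one may choose a constant $C>0$ so large that $g+C>f$ on $\overline{W}\setminus W'$, while $f>g+C$ near $Y_\nu'$ because $g\to-\infty$ there. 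Working with weights relative to $g_\nu$, the regularized maximum $\Psi:=\max_\eta(f,\,g+C)$ (for $\eta>0$ small) is then $C^\infty$ on $W$ --- it equals $f$ on a neighborhood of $Y_\nu'$, hence extends smoothly across $Y_\nu'$, and equals $g+C$ near $\partial W$ --- and satisfies $\sqrt{-1}\Theta_{g_\nu}+\sqrt{-1}\ddbar\Psi\geq 0$ on $W$, since $f$ and $g+C$ both have this last property on their domains (the curvature being identically $0$ for $g+C$) and the regularized maximum inherits it. Extending $\Psi$ by $g+C$ over $X'\setminus W'$ yields a global $C^\infty$ weight, and $h_\nu:=e^{-\Psi}g_\nu$ is the required semi-positive metric on $[Y_\nu']$. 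The point that makes the gluing work --- and that forces both the large additive constant and the particular direction in which the regularized maximum is taken --- is precisely that the flat weight $g$ degenerates to $-\infty$ along $Y_\nu'$ whereas the weight $f$ inherited from the ambient semi-positive metric on $[Y']$ stays bounded there; once this is exploited, the remaining verifications are routine.
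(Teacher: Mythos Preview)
Your proof is correct and shares the paper's core idea: near $Y_\nu'$ inherit a semi-positive weight on $[Y_\nu']$ from the given metric $h$ on $[Y']$ by dividing out the (locally pluriharmonic) contribution of the remaining components, away from $Y_\nu'$ use the flat metric determined by the canonical section $s_\nu$, and glue the two. The only difference is the gluing device. The paper starts from the global psh function $\vp=-\log|\sigma|_h^2$ (with $\sigma$ the canonical section of $[Y']$), restricts it to a neighborhood of $Y_\nu'$ disjoint from the other components, composes with a convex non-decreasing $\chi$ that is the identity for large arguments and constant below a threshold (exactly the trick already set up in the proof of Lemma~\ref{lemma:firstone_in_prelim_case_positive}), and extends by that constant; this produces the psh exhaustion $\psi$ of $X'\setminus Y_\nu'$ first, and the metric is then read off via $|s_\nu|_{h_\nu}^2=e^{-\psi}$. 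You reverse the order, building the metric first via a regularized maximum of the two weights and deducing the exhaustion afterwards. Both devices are standard and equivalent here; the paper's route is marginally shorter because the $\chi$-truncation is already in hand from the preceding lemma, while your regularized-maximum argument is self-contained.
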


\begin{proof}
It is sufficient to show that $[Y_1']$ is semi-positive and that $X'\setminus Y_1'$ is weakly $1$-complete under the assumption in the lemma. 
Take a $C^\infty$ Hermitian metric $h$ on $[Y_1'+Y_2'+\cdots+Y_{N'}']$ with semi-positive curvature and consider the function $\vp:=-\log |\sigma|_h^2$, where $\sigma\in H^0(X', [Y_1'+Y_2'+\cdots+Y_{N'}'])$ is the canonical section. 
Take a small neighborhood $V_1'$ of $Y_1'$ in $X'$ such that $V_1'\cap Y_\nu=\emptyset$ for any $\nu\geq 2$. 
Then, by modifying $\vp|_{V_1'}$ in the same manner as in the proof of Lemma \ref{lemma:firstone_in_prelim_case_positive}, one obtains a $C^\infty$ psh function $\psi\colon X'\setminus Y_1'\to \mathbb{R}$ such that $\psi|_{W\setminus Y_1'}=\vp|_{W\setminus Y_1'}$ holds for a neighborhood $W$ of $Y_1'$ in $V_1'$. 
As it is a exhaustion function, $X'\setminus Y_1'$ is weakly $1$-complete. 
Define a $C^\infty$ Hermitian metric $h_1$ on $[Y_1']$ by the equation $|\sigma_1|_{h_1}^2=\exp(-\psi)$, where $\sigma_1\in H^0(X', [Y_1'])$ is the canonical section. 
Then, as the local weight function of $h_1$ is psh on a neighborhood of each point of $X'$, the assertion follows. 
\end{proof}

In the rest of this subsection, let us show the following proposition on the cohomology of $M$ in this case. 

\begin{proposition}\label{prop:main_for_prelim_tor_case}
When $N_{Y_\nu/X}$ is a torsion element of ${\rm Pic}^0(Y_\nu)$ for some $\nu$, the following holds: \\
$(i)$ $H^1(M, \mathcal{O}_M)$ is of infinite dimension. \\
$(ii)$ The closure $K$ of $\{0\}$ in $H^1(M, \mathcal{O}_M)$ satisfies ${\rm dim}\,K\leq {\rm dim}\,H^1(X, \mathcal{O}_X)$. Especially, $H^1(M, \mathcal{O}_M)$ is Hausdorff if $H^1(X, \mathcal{O}_X)=0$. 
\end{proposition}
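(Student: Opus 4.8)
The plan is to use the elliptic fibration $f\colon X\to B$ furnished by Proposition \ref{prop:existence_of_fibration_f_when_N_tor}, for which each $Y_\mu$ is a fibre. Put $b_\mu:=f(Y_\mu)$ and $B':=B\setminus\{b_1,\dots,b_N\}$. Since $f$ is proper and $M=f^{-1}(B')$, the map $f|_M\colon M\to B'$ is a proper surjection with connected fibres onto the open (hence Stein) Riemann surface $B'$; in particular $(f|_M)_*\mathcal O_M=\mathcal O_{B'}$ and $M$ is holomorphically convex.

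For $(i)$ I would run the Leray spectral sequence of $f|_M$. As $B'$ is a noncompact Riemann surface, $H^p(B',\mathcal G)=0$ for every coherent sheaf $\mathcal G$ and every $p\geq1$, so the spectral sequence degenerates and $H^1(M,\mathcal O_M)\cong H^0(B',\mathcal G)$ with $\mathcal G:=R^1(f|_M)_*\mathcal O_M$. By Grauert's theorem $\mathcal G$ is coherent, and by the base-change theorem it is locally free of rank $1$ on the dense open subset of $B'$ over which $f$ has smooth fibres, since such a fibre is an elliptic curve with $h^1(\mathcal O)=1$. Being a coherent sheaf of rank $1$ on the smooth curve $B'$, $\mathcal G$ splits as $L\oplus\mathcal T$ with $L$ a line bundle and $\mathcal T$ a torsion sheaf, and $L\cong\mathcal O_{B'}$ because $B'$ is a noncompact Riemann surface. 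Hence $H^0(B',\mathcal G)\supseteq H^0(B',\mathcal O_{B'})=\mathcal O(B')$, which is infinite-dimensional since $B'$ is a noncompact connected Riemann surface. This gives $(i)$.

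For $(ii)$ I would fix pairwise disjoint coordinate disks $D_\mu\subset B$ with $D_\mu\cap\{b_1,\dots,b_N\}=\{b_\mu\}$, and set $V_\mu:=f^{-1}(D_\mu)$ and $V_\mu^*:=V_\mu\cap M=f^{-1}(D_\mu\setminus\{b_\mu\})$. Each $V_\mu^*$ maps properly onto the Stein space $D_\mu\setminus\{b_\mu\}$, hence is holomorphically convex, so the range of $\delbar$ on $V_\mu^*$ is closed; i.e.\ $H^1(V_\mu^*,\mathcal O_{V_\mu^*})$ is Hausdorff. Let $[\alpha]\in K=\overline{\{0\}}\subset H^1(M,\mathcal O_M)$ be represented by a $\delbar$-closed $(0,1)$-form $\alpha$ on $M$. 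The restriction $H^1(M,\mathcal O_M)\to H^1(V_\mu^*,\mathcal O_{V_\mu^*})$ is continuous, so it sends $K$ into $\overline{\{0\}}=\{0\}$; thus for each $\mu$ there is $h_\mu\in C^\infty(V_\mu^*)$ with $\alpha|_{V_\mu^*}=\delbar h_\mu$. Pick $\chi_\mu\in C^\infty(X)$ with $\chi_\mu\equiv1$ near $Y_\mu$ and with $\overline{{\rm supp}\,\chi_\mu}$ a compact subset of $V_\mu$. Then $\beta:=\alpha-\delbar\big(\sum_{\mu}\chi_\mu h_\mu\big)$, where each $\chi_\mu h_\mu$ is extended by $0$ over $M$, is smooth and $\delbar$-closed on $M$ and vanishes on a neighbourhood of $Y$, since near $Y_\mu$ one has $\beta=\delbar\big((1-\chi_\mu)h_\mu\big)=0$. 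Hence $\beta$ extends by $0$ to a $\delbar$-closed $(0,1)$-form $\widetilde\beta$ on $X$, and $\widetilde\beta|_M=\beta$ represents $[\alpha]$. Therefore $K$ lies in the image of the restriction map $H^1(X,\mathcal O_X)\to H^1(M,\mathcal O_M)$, and since $H^1(X,\mathcal O_X)$ is finite-dimensional (Lemma \ref{lem:first_lem_in_prelim}), ${\rm dim}\,K\leq{\rm dim}\,H^1(X,\mathcal O_X)$. The last assertion of the proposition is the special case $H^1(X,\mathcal O_X)=0$.

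The step I expect to be the crux is the Hausdorffness of $H^1(V_\mu^*,\mathcal O_{V_\mu^*})$, equivalently the closedness of the range of $\delbar$ on $V_\mu^*$: this rests on $V_\mu^*$ being holomorphically convex (it is proper over the Stein punctured disk) together with the classical fact that the Dolbeault cohomology of a holomorphically convex complex manifold is Hausdorff; alternatively one may shrink $D_\mu$ so that $D_\mu\setminus\{b_\mu\}$ carries no singular fibre, so that $V_\mu^*$ becomes a smooth elliptic fibration over a punctured disk and $H^1(V_\mu^*,\mathcal O_{V_\mu^*})$ can be computed directly. One should also keep in mind that the Leray identification in $(i)$ is only an isomorphism of vector spaces and not a topological one in general — precisely the phenomenon quantified by Theorem \ref{thm:main}.
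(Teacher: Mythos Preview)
Your argument is correct and follows the same strategy as the paper: exploit the fibration $f$, use the Leray spectral sequence for $(i)$, and for $(ii)$ use the covering $\{M,V_1,\dots,V_N\}$ together with the Hausdorffness of $H^1(V_\nu^*,\mathcal O_{V_\nu^*})$. Two differences are worth noting. First, your cutoff-and-extend construction in $(ii)$ is simply the explicit Dolbeault unwinding of the paper's abstract Mayer--Vietoris step: there one observes that the map $\beta$ to $\bigoplus_\nu H^1(V_\nu^*,\mathcal O_{V_\nu^*})$ is continuous with Hausdorff target, hence $K\oplus K'\subset\ker\beta=\mathrm{Image}\,\alpha$, which is your conclusion. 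Second, and more substantively, the paper does not invoke a general ``holomorphically convex $\Rightarrow$ Hausdorff cohomology'' principle; instead it shrinks $\Delta_\nu$ so that $f$ is a submersion over $\Delta_\nu^*$ and then proves (Lemma~\ref{lem:tor_case_H^1_spectral_seq}~$(ii)$) that multiplication by a single class gives a \emph{topological} isomorphism $H^0(\Delta_\nu^*,\mathcal O_{\Delta_\nu^*})\cong H^1(V_\nu^*,\mathcal O_{V_\nu^*})$ between Fr\'echet spaces (via the open mapping theorem), from which Hausdorffness is immediate. This is exactly the ``alternative'' route you sketch at the end, so your proposal is complete once that computation is carried out; the general principle you cite is true but less elementary than your phrasing suggests, and the paper avoids it.
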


\begin{proof}
Let $f\colon X\to B$ be the fibration as in Proposition \ref{prop:existence_of_fibration_f_when_N_tor}. 
Denote by $\{p_\nu\}$ the image of $Y_\nu$ by $f$ for each $\nu$, and by $B^*$ the complement $B\setminus\{p_1, p_2, \cdots p_N\}$. 
Note that $M=f^{-1}(B^*)$. 
As the assertion $(i)$ follows from Lemma \ref{lem:tor_case_H^1_spectral_seq} $(i)$ below and the fact that $H^1(F, \mathcal{O}_F)\cong \mathbb{C}$ holds for a general fiber $F$ of $f$, we will show the assertion $(ii)$ in what follows. 

Take a small coordinate open ball $\Delta_\nu\subset B$ centered at $p_\nu$ and set $V_\nu:=f^{-1}(\Delta_\nu)$ for each $\nu$. 
Then it follows from Lemma \ref{lem:first_lem_in_prelim} $(ii)$ and the Mayer--Vietoris sequence for the open covering $\{M, V_1, V_2, \dots, V_N\}$ of $X$ that there exists an exact sequence 
\[
\xymatrix{
H^1(X, \mathcal{O}_X) \ar[r]^{\alpha\ \ \ \ \ \ \ \ \ \ \ \ \ \ }&
H^1(M, \mathcal{O}_M)\oplus\bigoplus_{\nu=1}^N H^1(V_\nu, \mathcal{O}_{V_\nu}) \ar[r]^{\ \ \ \ \ \ \ \ \beta}&
\bigoplus_{\nu=1}^N H^1(V_\nu^*, \mathcal{O}_{V_\nu^*}) \ar[r]&
0, 
}
\]
where $\Delta_\nu^*:=\Delta_\nu\setminus\{p_\nu\}$ and $V_\nu^*:=f^{-1}(\Delta_\nu^*)$ for each $\nu$. 
As the morphism $\beta$ is continuous by construction and $\textstyle\bigoplus_{\nu=1}^N H^1(V_\nu^*, \mathcal{O}_{V_\nu^*})$ is Hausdorff by Lemma \ref{lem:tor_case_H^1_spectral_seq} $(ii)$, the kernel ${\rm Ker}\,\beta$ of $\beta$ is a closed subset which contains the origin. 
Thus one has that $K\oplus K'$ is included in ${\rm Ker}\,\beta\ (={\rm Image}\,\alpha)$, where $K'$ is the closure of $\{0\}$ in $\textstyle\bigoplus_{\nu=1}^N H^1(V_\nu, \mathcal{O}_{V_\nu})$. 
Thus it holds that ${\rm dim}\,K\leq {\rm dim}({\rm Image}\,\alpha) \leq {\rm dim}\,H^1(X, \mathcal{O}_X)$. 
Especially, if $H^1(X, \mathcal{O}_X)=0$, then it holds that $K=\{0\}$, which means that $\{0\}$ is a closed subset of $H^1(M, \mathcal{O}_M)$. As is well-known, $H^1(M, \mathcal{O}_M)$ is Hausdorff in this case (see \cite[I 2.3]{SW} for example), from which the assertion follows. 
\end{proof}

\begin{lemma}\label{lem:tor_case_H^1_spectral_seq}
Let $f\colon X\to B$ be as in the proof of Proposition \ref{prop:main_for_prelim_tor_case} 
and $D\subset B$ be a Stein domain. Then, for the preimage $W:=f^{-1}(D)$, the following holds: \\
$(i)$ For any finitely many regular values $q_1, q_2, \dots, q_\ell\in D$ of $f|_W$, the restriction map
\[
H^1(W, \mathcal{O}_W) \to \bigoplus_{\lambda=1}^\ell H^1(f^{-1}(q_\lambda), \mathcal{O}_{f^{-1}(q_\lambda)})
\]
is surjective. \\
$(ii)$ 
If $H^2(D, \mathbb{Z})=0$ and 
$f|_W$ is a submersion, then there exists an element $\alpha \in H^1(W, \mathcal{O}_{W})$ such that the map 
\[
H^0(W, \mathcal{O}_{W})\ni \eta\mapsto \eta\cdot \alpha\in H^1(W, \mathcal{O}_{W})
\]
is an isomorphism between topological vector spaces, where $H^0(W, \mathcal{O}_{W})$ is regarded as a Fr\'echet space by using the topology of uniform convergence on compact sets. 
\end{lemma}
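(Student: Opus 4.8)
The plan is to analyse the higher direct image sheaf $\mathcal{F}:=R^1f_*\mathcal{O}_W$ on $D$ by means of the Leray spectral sequence for $f|_W\colon W\to D$. Since $f$ is proper, $\mathcal{F}$ is coherent by Grauert's direct image theorem; since $f$ has connected fibres one has $f_*\mathcal{O}_W=\mathcal{O}_D$; and since $D$ is Stein, Cartan's Theorem B gives $H^p(D,f_*\mathcal{O}_W)=0$ for $p\geq 1$, so the exact sequence of low-degree terms of the Leray spectral sequence yields a continuous isomorphism $H^1(W,\mathcal{O}_W)\cong H^0(D,\mathcal{F})$, which is then a topological isomorphism of Fr\'echet spaces by the open mapping theorem. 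Moreover, for each $q\in D$ the restriction map $H^1(W,\mathcal{O}_W)\to H^1(f^{-1}(q),\mathcal{O}_{f^{-1}(q)})$ factors, under this identification, as the stalk map $H^0(D,\mathcal{F})\to\mathcal{F}_q$ followed by the reduction $\mathcal{F}_q\to\mathcal{F}_q\otimes_{\mathcal{O}_{D,q}}\mathbb{C}(q)$ and the base change map $\mathcal{F}_q\otimes\mathbb{C}(q)\to H^1(f^{-1}(q),\mathcal{O}_{f^{-1}(q)})$; this is checked by working on an open cover of $D$ over which $f$ is trivial, so that everything becomes a \v{C}ech computation.

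For $(i)$: at a regular value $q_\lambda$ the fibre $F_\lambda:=f^{-1}(q_\lambda)$ is a smooth connected curve along which $f$ is a submersion, hence reduced; and since every $Y_\mu$ is a fibre of $f$, the adjunction formula shows $K_X$ and $[F_\lambda]$ both restrict trivially to $F_\lambda$, so $F_\lambda$ is an elliptic curve and $H^1(F_\lambda,\mathcal{O}_{F_\lambda})\cong\mathbb{C}$. Near $q_\lambda$ the map $f$ is a smooth elliptic fibration, so Grauert's base change theorem applies and $\mathcal{F}_{q_\lambda}\otimes\mathbb{C}(q_\lambda)\to H^1(F_\lambda,\mathcal{O}_{F_\lambda})$ is an isomorphism. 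Let $\mathcal{I}$ be the ideal sheaf of the reduced finite set $\{q_1,\dots,q_\ell\}$; since these are reduced points, $\mathcal{I}_{q_\lambda}\mathcal{F}_{q_\lambda}=\mathfrak{m}_{q_\lambda}\mathcal{F}_{q_\lambda}$, so $H^0(D,\mathcal{F}/\mathcal{I}\mathcal{F})=\bigoplus_{\lambda}\mathcal{F}_{q_\lambda}\otimes\mathbb{C}(q_\lambda)$. From $0\to\mathcal{I}\mathcal{F}\to\mathcal{F}\to\mathcal{F}/\mathcal{I}\mathcal{F}\to 0$ and the vanishing $H^1(D,\mathcal{I}\mathcal{F})=0$ (Theorem B applied to the coherent sheaf $\mathcal{I}\mathcal{F}$ on the Stein space $D$) we obtain that $H^0(D,\mathcal{F})\to\bigoplus_{\lambda}\mathcal{F}_{q_\lambda}\otimes\mathbb{C}(q_\lambda)$ is surjective; combined with the factorization of the restriction map described above, this is exactly the asserted surjectivity.

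For $(ii)$: when $f|_W$ is a submersion it is a smooth elliptic fibration over all of $D$, so $h^1(f^{-1}(q),\mathcal{O})\equiv 1$ and Grauert's theorem shows $\mathcal{F}$ is locally free of rank one, i.e.\ a holomorphic line bundle $\mathcal{L}$ on $D$. Since $D$ is Stein, $H^1(D,\mathcal{O}_D)=H^2(D,\mathcal{O}_D)=0$, so the exponential exact sequence gives ${\rm Pic}(D)=H^1(D,\mathcal{O}_D^{*})\cong H^2(D,\mathbb{Z})=0$; hence $\mathcal{L}\cong\mathcal{O}_D$. Let $\alpha\in H^0(D,\mathcal{L})=H^1(W,\mathcal{O}_W)$ be the global generator corresponding to $1$ under a trivialization. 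Functoriality of $R^\bullet f_*$ with respect to the $\mathcal{O}_W$-module structure identifies the map $\eta\mapsto\eta\cdot\alpha$ with the $\mathcal{O}_D$-action $H^0(D,\mathcal{O}_D)\to H^0(D,\mathcal{L})$ by the section $\alpha$; since $H^0(W,\mathcal{O}_W)=H^0(D,\mathcal{O}_D)=H^0(D,\mathcal{L})$ (the first equality again by Leray, topologically) and $\alpha$ is nowhere vanishing, this map is bijective, hence a topological isomorphism of Fr\'echet spaces by the open mapping theorem.

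The main obstacle I expect is the bookkeeping in the first paragraph: rigorously matching the fibre-restriction map with the composite stalk-plus-base-change map under the Leray identification, and tracking that all the identifications are topological and respect the module structure used in $(ii)$. These are standard once one trivializes $f$ over small open sets of $D$, but they are the part that genuinely needs to be written out; a secondary point of care is the reducedness of the scheme-theoretic fibre over a regular value in $(i)$, which is precisely where the regular-value hypothesis (rather than merely avoiding the special fibres) is used.
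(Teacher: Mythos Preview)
Your argument for the algebraic content is essentially the paper's: the Leray five-term sequence together with Grauert's coherence theorem and Cartan's Theorem~B to identify $H^1(W,\mathcal{O}_W)$ with $H^0(D,R^1f_*\mathcal{O}_W)$, the ideal-sheaf argument for $(i)$, and in $(ii)$ the identification of $R^1f_*\mathcal{O}_W$ with a line bundle made trivial via the exponential sequence and $H^2(D,\mathbb{Z})=0$.

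The gap is in the topological half of $(ii)$. You assert that the Leray edge map $H^1(W,\mathcal{O}_W)\to H^0(D,\mathcal{F})$ is continuous and then invoke the open mapping theorem to upgrade the linear bijection to a homeomorphism. But the open mapping theorem needs both spaces to be Fr\'echet, and $H^1(W,\mathcal{O}_W)=Z^{0,1}(W)/B^{0,1}(W)$ is Fr\'echet only once $B^{0,1}(W)$ is known to be closed---precisely the point at issue. One could try to rescue this by noting that a continuous injection into a Hausdorff space forces the domain to be Hausdorff, but then the entire weight falls on the continuity of the edge map itself, which you do not prove and which is not as routine as the other identifications (the source carries a quotient topology, and you have not exhibited the map as induced by a continuous map on $Z^{0,1}(W)$). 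The paper sidesteps this circularity: it fixes a representative $\omega\in Z^{0,1}(W)$ of $\alpha$, uses the manifestly continuous restriction $Z^{0,1}(W)\to H^1(f^{-1}(p),\mathcal{O}_{f^{-1}(p)})$ to each fibre to show that $(H^0(W,\mathcal{O}_W)\cdot\omega)\cap\overline{B^{0,1}(W)}=0$, deduces from this and the bijectivity of $\eta\mapsto\eta\cdot\alpha$ that $\overline{B^{0,1}(W)}=B^{0,1}(W)$, and only then applies the open mapping theorem to the continuous bijection $\eta\mapsto\eta\cdot\alpha$ going \emph{into} the now-Fr\'echet space $H^1(W,\mathcal{O}_W)$.
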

%

\begin{proof}
Consider the exact sequence 
\[
H^1(D, f_*\mathcal{O}_W) \to H^1(W, \mathcal{O}_W) \to H^0(D, R^1f_*\mathcal{O}_W)\to H^2(D, f_*\mathcal{O}_W)
\]
obtained by considering the five-term exact sequence for the Leray spectral sequence. 
As $D$ is Stein and $f$ is proper, it follows from Grauert's direct image theorem and Oka--Cartan vanishing theorem that $H^1(D, f_*\mathcal{O}_W)=0$ and $H^2(D, f_*\mathcal{O}_W)=0$ (See \cite[Chapter I\!I\!I]{GPR} for example). 
Therefore the map $H^1(W, \mathcal{O}_W) \to H^0(D, R^1f_*\mathcal{O}_W)$ is a linear bijection. 

In order to show the assertion $(i)$, take finitely many regular values $q_1, q_2, \dots, q_\ell\in D$ of $f|_W$. 
Let $I$ be the defining ideal sheaf of the analytic subset $\{q_1, q_2, \dots, q_\ell\}$ of $D$. 
Then the short exact sequence $0\to I\cdot R^1f_*\mathcal{O}_W\to R^1f_*\mathcal{O}_W\to R^1f_*\mathcal{O}_W/(I\cdot R^1f_*\mathcal{O}_W)\to 0$ induces the exact sequence 
\[
H^0(D, R^1f_*\mathcal{O}_W)\to H^0(D, R^1f_*\mathcal{O}_W/(I\cdot R^1f_*\mathcal{O}_W)) \to 0, 
\]
since $H^1(D, I\cdot R^1f_*\mathcal{O}_W)=0$ holds again by Grauert's direct image theorem and Oka--Cartan vanishing theorem, from which the assertion $(i)$ follows. 

In what follows we will show the assertion $(ii)$. 
As $f|_W$ is a submersion and $f$ is an elliptic fibration, one can identify $R^1f_*\mathcal{O}_W$ with the sheaf $\mathcal{O}_D(L)$ of the holomorphic sections of a line bundle $L$ on $D$ (See \cite[I\!I\!I \S 4 Theorem 4.7 (d)]{GPR} for example). 
As $c_1(L)\in H^2(D, \mathbb{Z})=0$ and $H^1(D, \mathcal{O}_D)=0$ by the Steinness of $D$, it follows from the long exact sequence induced by the exponential sequence that $L$ is holomorphically trivial. 
Therefore, by considering a global trivialization of $L$, one obtains an element $\alpha \in H^1(W, \mathcal{O}_W)\ (\cong H^0(D, R^1f_*\mathcal{O}_W))$ such that the map 
\begin{equation}\label{map:timesapha_whichisisom}
H^0(D, \mathcal{O}_{D})\ni \eta\mapsto (f^*\eta)\cdot \alpha\in H^1(W, \mathcal{O}_{W})
\end{equation}
is a linear bijection. 
As $H^0(W, \mathcal{O}_{W})\cong H^0(D, \mathcal{O}_D)$ holds by the properness of $f$, it is sufficient to show that this map is a homeomorphism. 

As a preparation, first let us show that 
\begin{equation}\label{equation:H0omegacapImage=0}
(H^0(W, \mathcal{O}_W)\cdot \omega)\cap \overline{B^{0, 1}(W)}=0
\end{equation}
holds as subspaces of $Z^{0, 1}(W)$, where $\omega\in Z^{0, 1}(W)$ is a representative of the class $\alpha$. 
Take an element $\eta\in H^0(D, \mathcal{O}_D)$ such that $(f^*\eta)\cdot \omega\in \overline{B^{0, 1}(W)}$, and a point $p\in D$. 
As the restriction map $\xi\colon Z^{0, 1}(W)\to H^1(f^{-1}(p), \mathcal{O}_{f^{-1}(p)})$ is continuous and $\xi(B^{0, 1}(W))=\{0\}\subset H^1(f^{-1}(p), \mathcal{O}_{f^{-1}(p)})$, it holds that $\eta(p)\cdot (\alpha|_{f^{-1}(p)})=\xi((f^*\eta)\cdot \omega)=0$. Therefore, by the construction of $\alpha$, it holds that $\eta(p)=0$, from which (\ref{equation:H0omegacapImage=0}) follows. 

Take an element $a\in \overline{B^{0, 1}(W)}$. 
Then, as the map (\ref{map:timesapha_whichisisom}) is bijective, one can take an element $\zeta\in H^0(W, \mathcal{O}_W)$ such that $\zeta\omega-a\in B^{0, 1}(W)$. 
Thus we have that $\zeta\omega\in \overline{B^{0, 1}(W)}$, from which and (\ref{equation:H0omegacapImage=0}) it follows that $\zeta\omega=0$. 
Therefore it holds that $a\in B^{0, 1}(W)$, from which it follows that $B^{0, 1}(W)$ is a closed subset of the Fr\'echet space $Z^{0, 1}(W)$. 
Thus the quotient space $H^1(W, \mathcal{O}_W)$ is also a Fr\'echet space. 
As the map (\ref{map:timesapha_whichisisom}) is a continuous linear bijection between Fr\'echet spaces, it follows from the open mapping theorem \cite[2.11]{R} that the map (\ref{map:timesapha_whichisisom}) is a homeomorphism.
\end{proof}

\subsubsection{Preliminary observation for the case where $N_{Y_\nu/X}$ is non-torsion for any $\nu$}\label{section:prelim_case_non_tor_1compl_etc}

In what follows, we mainly investigate the case where $N_{Y_\nu/X}$ is a non-torsion element of ${\rm Pic}^0(Y_\nu)$ for any $\nu\in \{1, 2, \dots, N\}$, since otherwise we know the complex analytic structure of $M$ quite well, as we observed in 
\S \ref{section:prelim_case_positive} and \ref{section:prelim_case_torsion_fibrartion}. 
Note that the K\"ahlerness assumption for $X$ is not needed in all the arguments from this subsection. 

\begin{proposition}\label{prop:existence_of_Vw_cptsurf}
Let $X$ be a non-singular connected compact complex surface such that $K_X^{-1}$ is semi-positive. 
Let $Y$ be a non-singular hypersurface of $X$ such that $D=Y_1+Y_2+\cdots +Y_N$ is an anticanonical divisor, where $\{Y_\nu\}_{\nu=1}^N$ is the set of all the connected components of $Y$. Assume that $N_{Y_\nu/X}$ is a non-torsion element of ${\rm Pic}^0(Y_\nu)$ for any $\nu\in \{1, 2, \dots, N\}$. 
Then, for each $\nu$, there exist an open neighborhood $V_\nu\subset X$ of $Y_\nu$, 
a finite open covering $\{V_{\nu, j}\}$ of $V_\nu$, 
and a holomorphic defining function $w_{\nu, j}$ of $V_{\nu, j}\cap Y_\nu$ on $V_{\nu, j}$ such that $w_{\nu, j}=t_{\nu, jk}w_{\nu, k}$ holds for some $t_{\nu, jk}\in {\rm U}(1)$ on each $V_{\nu, jk}:=V_{\nu, j}\cap V_{\nu, k}$. 
Especially, each $V_\nu^*:=V_\nu\setminus Y_\nu$ admits a family $\{H_{\nu, t}\}_{t\in (0, 1)}$ of compact Levi-flat hypersurfaces such that each leaf of $H_{\nu, t}$ is dense in $H_{\nu, t}$ for any $t\in (0, 1)$. 
Moreover, the complement $M:=X\setminus Y$ is weakly $1$-complete, whereas it is not strongly $1$-convex. 
\end{proposition}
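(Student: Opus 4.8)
The plan is to obtain the unitary flat tubular neighbourhoods from the cited literature and then to read off the remaining assertions from the holomorphic foliation they induce; concretely I would proceed in four steps. First, fix a $C^\infty$ metric $h$ on $K_X^{-1}=[D]$ with $\sqrt{-1}\Theta_h\geq 0$ and write the canonical section of $[D]=[Y_1+\cdots+Y_N]$ as $\sigma=\sigma_1\cdots\sigma_N$, where $\sigma_\nu\in H^0(X,[Y_\nu])$ is the canonical section of $[Y_\nu]$. By Lemma~\ref{lem:first_lem_in_prelim}$(i)$ and the hypothesis, each $N_{Y_\nu/X}$ is a non-torsion element of ${\rm Pic}^0(Y_\nu)$, so, since $[D]$ is semi-positive, \cite{Ko2020} together with \cite[Remark 5.2]{O} (see also \cite{Ko}) supplies, for each $\nu$, a neighbourhood $V_\nu$ of $Y_\nu$ on which $[Y_\nu]$ is unitary flat. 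Choosing a finite cover $\{V_{\nu,j}\}$ of $V_\nu$ with holomorphic frames $e_{\nu,j}$ of $[Y_\nu]$ whose transition functions are ${\rm U}(1)$-valued constants and writing $\sigma_\nu=w_{\nu,j}e_{\nu,j}$ on $V_{\nu,j}$, the $w_{\nu,j}$ are holomorphic defining functions of $V_{\nu,j}\cap Y_\nu$ — because $\sigma_\nu$ vanishes to order one along the reduced divisor $Y_\nu$ — and they satisfy $w_{\nu,j}=t_{\nu,jk}w_{\nu,k}$ with $t_{\nu,jk}\in{\rm U}(1)$, which is the first assertion. After shrinking I may assume $V_\nu=\{|w_\nu|<1\}$, where $|w_\nu|$ is the $C^\infty$ function on $V_\nu$ equal to $|w_{\nu,j}|$ on each $V_{\nu,j}$ (well defined since $|t_{\nu,jk}|\equiv1$), vanishing exactly on $Y_\nu$; moreover I may arrange that $|w_\nu|\colon V_\nu^*\to(0,1)$ is a proper submersion.

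Second, for $t\in(0,1)$ I put $H_{\nu,t}:=\{|w_\nu|=t\}\subset V_\nu^*$, a compact real hypersurface. Locally $H_{\nu,t}=\{|w_{\nu,j}|=t\}$, and since $\ddbar\,|w_{\nu,j}|^2=dw_{\nu,j}\wedge d\overline{w_{\nu,j}}$ restricts to $0$ on $\ker dw_{\nu,j}$, the hypersurface $H_{\nu,t}$ is Levi-flat, with Levi foliation given locally by $\{w_{\nu,j}=c\}$, $|c|=t$. As the $w_{\nu,j}$ differ by the constants $t_{\nu,jk}$, the local foliations $\{dw_{\nu,j}=0\}$ patch to a holomorphic foliation $\mathcal{F}$ on $V_\nu$ having $Y_\nu$ as a compact leaf, and the holonomy of $\mathcal{F}$ along $Y_\nu$ acts on a small transversal circle by multiplication by $\rho(\gamma)$, $\gamma\in\pi_1(Y_\nu)$, where $\rho\colon\pi_1(Y_\nu)\to{\rm U}(1)$ is the monodromy of $N_{Y_\nu/X}$. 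Since $N_{Y_\nu/X}$ is non-torsion, $\rho(\pi_1(Y_\nu))$ is infinite, hence dense in ${\rm U}(1)$, so every orbit of the holonomy is dense in the transversal circle; by the standard argument for minimal foliations this gives that every leaf of $\mathcal{F}$ lying in $H_{\nu,t}$ is dense in $H_{\nu,t}$, which is the second assertion. Moreover such a leaf, as a Riemann surface, is the covering of the elliptic curve $Y_\nu$ with fundamental group $\ker\rho$; since $\rho(\pi_1(Y_\nu))$ is infinite, $\ker\rho$ has rank at most one, so the leaf is biholomorphic to $\mathbb{C}$ or to $\mathbb{C}^*$, in particular parabolic — a fact I will use in the last step.

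Third, the function $\psi:=-\log|\sigma|_h^2$ is $C^\infty$ and psh on $M=X\setminus D$ (as in the proof of Lemma~\ref{lem:semipositivity_of_each_Y_nu}), and it is exhausting since $\{\psi\leq c\}=\{|\sigma|_h^2\geq e^{-c}\}$ is closed in $X$ and disjoint from $D$, hence compact in $M$; thus $M$ is weakly $1$-complete. Fourth, suppose for contradiction that $M$ is strongly $1$-convex, so that there exist a compact $K\subset M$ and a $C^\infty$ exhaustion $\Phi\colon M\to\mathbb{R}$ which is spsh on $M\setminus K$. Since $K$ is compact and disjoint from $Y_1=\bigcap_{s>0}\{|w_1|\leq s\}$, there is $t\in(0,1)$ with $H:=H_{1,t}$ disjoint from $K$. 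Pick a leaf $L\subset H$ of $\mathcal{F}$. By the previous step $L$ is parabolic, and $\Phi|_L$ is psh and bounded above on $L$ because $\overline{L}=H$ is compact; hence $\Phi|_L$ is constant, so the Levi form of $\Phi$ vanishes along the nonzero complex line $T_pL\subset T_p^{1,0}M$ at every $p\in L$ — contradicting the strict plurisubharmonicity of $\Phi$ at $p\in M\setminus K$. Therefore $M$ is not strongly $1$-convex.

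The step I expect to be the main obstacle is the fourth one: keeping track of the compactness so that the Levi-flat hypersurfaces escape an arbitrary compact subset of $M$, and, above all, establishing that their leaves are parabolic — which is exactly where the non-torsion hypothesis is used, via $\ker\rho$ having rank at most one. The first step, the flat tubular neighbourhood, is the heaviest structural ingredient, but it is furnished verbatim by \cite{Ko2020} and \cite[Remark 5.2]{O}, so I would invoke it as a black box.
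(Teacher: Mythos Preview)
Your proof is correct and matches the paper's argument in structure: Steps~1--3 (the unitary flat tubular neighborhood from \cite{Ko2020} and \cite[Remark~5.2]{O}, the Levi-flat hypersurfaces $H_{\nu,t}=\{|w_\nu|=t\}$ with dense leaves via the holonomy $=$ monodromy identification, and weak $1$-completeness from $-\log|\sigma|_h^2$) coincide with the paper's. One small refinement in Step~1: the cited results require $[Y_\nu]$ itself (not only $[D]$) to be semi-positive, which the paper supplies via Lemma~\ref{lem:semipositivity_of_each_Y_nu}; you should invoke that explicitly.

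The only genuine difference is Step~4. You argue that each leaf $L\subset H_{\nu,t}$ is parabolic (being $\mathbb{C}/\ker\rho$ with $\ker\rho$ of rank $\leq 1$), so a bounded-above subharmonic function on $L$ is constant, whence the Levi form of any spsh exhaustion must degenerate along $T_pL$. The paper instead proves directly (Lemma~\ref{lem:eachpshonHisconst}) that any psh $\psi$ on $V_\nu^*$ is \emph{constant on all of} $H_{\nu,t}$: the maximum of $\psi|_{H_{\nu,t}}$ is attained at some $x$ by compactness, the ordinary maximum principle on the leaf through $x$ forces $\psi$ to be constant on that leaf, and density plus upper semi-continuity propagates this to $H_{\nu,t}$. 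Both arguments are valid; the paper's avoids identifying the conformal type of the leaves and is a touch more elementary, while yours gives the extra information that the leaves are $\mathbb{C}$ or $\mathbb{C}^*$.
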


\begin{proof}
By Lemma \ref{lem:semipositivity_of_each_Y_nu}, 
$M$ is weakly $1$-complete and $[Y_\nu]$ is semi-positive. 
Thus the existence of $\{(V_{\nu, j}, w_{\nu, j})\}$ as stated is a consequence from \cite{Ko2020}, \cite[Remark 5.2]{O} (see Remark \ref{rmk:nonKa_case} for the details). 
For a sufficiently small positive number $\ve>0$ and for each $t\in (0, 1)$, 
set $H_{\nu, t}:=\bigcup_j\{p\in V_{\nu, j}\mid |w_{\nu, j}(p)|=\ve\cdot t\}$. 
Then $H_{\nu, t}$ is a Levi-flat hypersurface, whose leaves are locally defined by $d w_{\nu, j}=0$. 
As the holonormy of this foliation coincides with the monodromy of the unitary flat line bundle $N_{Y_\nu/X}$, it follows from the assumption that each leaf is dense in $H_{\nu, t}$. 
Thus the assertion follows from Lemma \ref{lem:eachpshonHisconst} below. 
\end{proof}

\begin{remark}\label{rmk:nonKa_case}
Let $X$ and $Y_\nu$ be as in Proposition \ref{prop:existence_of_Vw_cptsurf}. 
Here we focus on a neighborhood of a fixed $Y_\nu$ and drop the index $\nu$ for simplicity (No confusion will be caused by this abbreviation, 
since here we will not use any assumption on the relation between $K_X^{-1}$ and $Y:=Y_\nu$, whereas we will use the fact that $Y$ is a non-singular elliptic curve embedded in $X$ whose normal bundle is topologically trivial). 
The existence of $\{(V_j, w_j)\}$ as in the statement of Proposition \ref{prop:existence_of_Vw_cptsurf} is shown when $X$ is K\"ahler \cite[Corollary 1.5]{Ko}. 
In accordance with the proof of \cite[Theorem 1.4 $(b)$]{Ko2020}, we can also describe the outline of the proof of this fact as follows (See also \cite[Observation 6.1]{Ko}): 
\begin{description}
\item[Step 1] For a psh exhaustion function $\psi$ on $X\setminus Y$ as in the proof of Lemma \ref{lem:semipositivity_of_each_Y_nu}, show that the level set $\{\psi=N\}$ is Levi-flat for large $N$. 
\item[Step 2] Show that there exists a holomorphic foliation $\mathcal{F}$ on a neighborhood $V$ of $Y$ whose leaf is either $Y$ or a leaf of the Levi-foliation of $\{\psi=N\}$ for some $N$. 
\item[Step 3] Show that the holonomy of $\mathcal{F}$ along $Y$ is $\mathrm{U}(1)$-rotation, and construct $\{(V_j, w_j)\}$ as in the statement of Proposition \ref{prop:existence_of_Vw_cptsurf} by considering the leafwise constant extension of a suitable coordinate of a transversal. 
\end{description}

Note that we may assume that the complex Hessian of $\psi$ has at least one positive eigenvalue at each point of $V\setminus Y$, where $V$ is a small neighborhood of $Y$ (See \cite[Proposition 2.1]{Ko2020} or \cite[\S 3.1]{Ko}). 
As is explained in \cite[Remark 5.2]{O}, one can also carry out Step 1 when $X$ is a non-K\"ahler smooth surface as follows: 
Assume that $X$ is non-K\"ahler and that $\{\psi=N\}$ is not Levi-flat for sufficiently large $N$. Then the Hermitian metric on $[Y]$ constructed from $\psi$ in the same manner as in the proof of Lemma \ref{lem:semipositivity_of_each_Y_nu} has a semi-positive curvature which is positive on a neighborhood of a point. Therefore it follows from Siu's solution  of Grauert–Riemenschneider conjecture \cite[Theorem 1]{Siu} that $X$ is Moishezon. 
Thus the contradiction occurs, since a non-singular Moishezon surface is projective \cite[Theorem 3.1]{Kod1} (See also \cite[VII Corollary 6.11]{GPR}). 

In the arguments for Step 2 (=Proof of \cite[Lemma 4.2 (ii), (iii)]{Ko2020}), the K\"ahlerness assumption of $X$ is used only for reducing the proof to the case where the restriction map $H^1(X, \mathcal{O}_X)\to H^1(Y, \mathcal{O}_{Y})$ is injective. 
In the arguments for Step 3 (=Proofs of Lemma 4.4 and Proposition 4.5, and the arguments after Proposition 4.5 in \cite{Ko2020}), the K\"ahlerness assumption of $X$ is not used. 
Therefore, in order to prove the existence of $\{(V_j, w_j)\}$ as in the statement of Proposition \ref{prop:existence_of_Vw_cptsurf}, it is sufficient to show the injectivity of $H^1(X, \mathcal{O}_X)\to H^1(Y, \mathcal{O}_{Y})$ when $X$ is neither K\"ahler nor elliptic, since \cite[Corollary 1.5]{Ko} can be applied when $X$ is K\"ahler and any connected component $Y$ of an anticanonical divisor $D$ of an elliptic surface which comes from a non-singular hypersurface is a fiber (Here we say that a surface is elliptic if it admits a proper holomorphic map onto a Riemann surface whose general fiber is an elliptic curve). 
Assume that $X$ is non-K\"ahler non-elliptic smooth surface. 
Then the algebraic dimension $a(X)$ of $X$ is equal to zero, since $X$ is projective if $a(X)=2$ and is elliptic if $a(X)=1$ \cite{Kod1}. 
Let $\pi\colon X\to S$ be the composition of finitely many blow-up morphisms such that $S$ is a minimal surface. 
Note that we may assume $S$ is non-elliptic (since otherwise $X$ is elliptic), and that $a(S)=0$. 
Therefore, by Kodaira's classification \cite[Theorem 55]{Kod2}, we may assume that $S$ is a surface of class VII$_0$ with $a(S)=0$. 
In this case, by Nakamura's theorems, the image $Z:=\pi(Y)$ of $Y$ is a non-singular elliptic curve in $S$ and the restriction map $H^1(S, \mathcal{O}_S)\to H^1(Z, \mathcal{O}_Z)$ is injective \cite[(2.2.2), (2.6)]{I}. 
As $\pi^*\colon H^1(S, \mathcal{O}_S)\to H^1(X, \mathcal{O}_X)$ is isomorphism, the assertion holds. 
\end{remark}

\begin{lemma}\label{lem:eachpshonHisconst}
For $V_\nu^*$ and $\{H_{\nu, t}\}_{t\in (0, 1)}$ as in 
Proposition \ref{prop:existence_of_Vw_cptsurf}, the following holds: 
For any psh function $\psi\colon V_\nu^*\to \mathbb{R}$ and any $t\in (0, 1)$, 
$\psi|_{H_{\nu, t}}$ is constant. 
Especially, there is no spsh function on $V_\nu^*$. 
\end{lemma}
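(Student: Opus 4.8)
The plan is to exploit the density of leaves in each compact Levi-flat hypersurface $H_{\nu,t}$, which was already established in Proposition~\ref{prop:existence_of_Vw_cptsurf}, together with the maximum principle for plurisubharmonic functions restricted to complex submanifolds. The key observation is that a plurisubharmonic function on $V_\nu^*$ restricts to a plurisubharmonic (indeed harmonic is too much to hope for, but subharmonic) function on each leaf of the Levi-foliation of $H_{\nu,t}$, since each leaf is a one-dimensional complex submanifold of $V_\nu^*$.

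First I would fix $t\in(0,1)$ and a plurisubharmonic function $\psi\colon V_\nu^*\to\mathbb{R}$. Since $H_{\nu,t}$ is compact, $\psi|_{H_{\nu,t}}$ attains its maximum at some point $p_0\in H_{\nu,t}$. Let $L$ be the leaf of the Levi-foliation through $p_0$; recall the leaves are locally cut out by $dw_{\nu,j}=0$, so $L$ is an immersed complex curve in $V_\nu^*$, and $\psi|_L$ is subharmonic. By Proposition~\ref{prop:existence_of_Vw_cptsurf} the leaf $L$ is dense in $H_{\nu,t}$. Now $\psi|_L$ attains its maximum over $\overline{L}=H_{\nu,t}$ at $p_0\in L$, hence $\psi|_L$ attains an interior maximum; by the maximum principle for subharmonic functions on a connected (noncompact) Riemann surface, $\psi|_L$ is constant, equal to $\max_{H_{\nu,t}}\psi$. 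By continuity of $\psi$ and density of $L$ in $H_{\nu,t}$, $\psi|_{H_{\nu,t}}$ is the constant $\max_{H_{\nu,t}}\psi$, which proves the first assertion.

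For the final sentence, suppose for contradiction that $\psi$ were a strictly plurisubharmonic function on $V_\nu^*$. Then for any $t$, its restriction to a leaf $L$ of the Levi-foliation of $H_{\nu,t}$ would be strictly subharmonic, hence cannot be constant on an open subset of $L$ — contradicting what we just proved. Alternatively, one notes directly that the complex Hessian of $\psi$ restricted to the leaf direction would have to be both strictly positive (by spsh) and identically zero (since $\psi$ is constant along leaves), giving the contradiction.

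The main obstacle I anticipate is making the "$\psi|_L$ is subharmonic and attains an interior maximum, hence is constant" step fully rigorous when $L$ is only an immersed, possibly non-closed submanifold: one must argue on the abstract Riemann surface structure of the leaf rather than inside $V_\nu^*$, and be careful that the maximum of $\psi|_{H_{\nu,t}}$ really is attained at a point lying on a dense leaf (any point of $H_{\nu,t}$ lies on some leaf, and every leaf of this foliation is dense, so this is fine). A secondary minor point is verifying that $H_{\nu,t}$ is connected so that "constant on $H_{\nu,t}$" is the right conclusion; this follows since each leaf is dense in it.
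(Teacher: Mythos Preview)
Your proof is correct and follows essentially the same approach as the paper's: take a point where $\psi|_{H_{\nu,t}}$ attains its maximum, apply the maximum principle on the leaf through that point, and conclude by density of the leaf; for the spsh statement, observe that the complex Hessian vanishes in the leaf direction. One small imprecision: you write ``by continuity of $\psi$'', but a psh function is in general only upper semi-continuous---this is still enough, since if $\psi\equiv C$ on the dense leaf $L$ and $p\in H_{\nu,t}$, then $C=\limsup_{L\ni q\to p}\psi(q)\le\psi(p)\le C$.
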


\begin{proof}
Take $t\in (0, 1)$. 
As $\psi$ is upper semi-continuous, the maximum of $\psi|_{H_{\nu, t}}$ is attained at a point $x\in H_{\nu, t}$. Let $\mathcal{L}\subset H_{\nu, t}$ the leaf such that $x\in \mathcal{L}$. 
By the maximum principle, we have that $\psi|_{\mathcal{L}}\equiv C$ holds for a constant $C\in \mathbb{R}$. Thus the assertion follows from the density of $\mathcal{L}$. 
Note that the complex Hessian of $\psi$ is zero along leaves and thus it cannot be spsh. 
\end{proof}

In what follows, instead of $H^1(M, \mathcal{O}_M)$ we often investigate the \v{C}ech cohomology of $M$ under the assumption of Proposition \ref{prop:existence_of_Vw_cptsurf} 
by using an open covering of $M$ as follows. 
By shrinking $V_{\nu}$'s and by changing the scaling of $w_{\nu, j}$, 
we may assume that each $w_{\nu, j}$ is defined on a neighborhood of $\overline{V_{\nu, j}}$ in $X$ for each $j$ and that $V_{\nu}=\textstyle\bigcup_j\{|w_{\nu, j}|< 1\}$ holds. 
Take a finite Stein open covering $\{W_\alpha\}$ of the complement $X\setminus\textstyle\bigcup_{\nu, j}\{|w_{\nu, j}|\leq 1/2\}$ (Consider a finite Stein open covering of a relatively compact subset $X\setminus\textstyle\bigcup_{\nu}\overline{V_\nu}$ together with $\{1/2<|w_{\nu, j}|<1\}$'s, for example). 
We use $\mathcal{W}:=\{V_{\nu, j}^*\}_{\nu, j}\cup \{W_\alpha\}$ as an open covering of $M$. 

By \v{C}ech--Dolbeault correspondence, one can determine the dimension of $H^1(M, \mathcal{O}_M)$ instead by observing the \v{C}ech cohomology $\check{H}^1(\mathcal{W}, \mathcal{O}_M)$. 
One can also check that $H^1(M, \mathcal{O}_M)$ is not Hausdorff instead by observing $\check{H}^1(\mathcal{W}, \mathcal{O}_M)$ by the following: 
\begin{lemma}\label{lem:comparizon_top_nonhausdoeff_CDcorresp}
Let $Z$ be a complex manifold and $\{W_j\}$ be a finite Stein open covering of $Z$. 
Assume that $H^1(\{W_j\}, \mathcal{O}_Z)$ is not Hausdorff by regarding it as the topological vector space obtained by the quotient $\check{Z}^1(\{W_j\}, \mathcal{O}_Z)/\check{B}^1(\{W_j\}, \mathcal{O}_Z)$ (Here we are regarding the space $\check{C}^{q}(\{W_j\}, \mathcal{O}_Z)$ of all the \v{C}ech $q$-cochains as a Fr\'echet space by using the topology of uniform convergence on compact sets for each integer $q$, and topologizing the space $\check{B}^1(\{W_j\}, \mathcal{O}_Z)$ of $1$-coboundaries and $\check{Z}^1(\{W_j\}, \mathcal{O}_Z)$ of $1$-cocycles by the relative topologies as subspaces of $\check{C}^{1}(\{W_j\}, \mathcal{O}_Z)$). 
Then $H^1(Z, \mathcal{O}_Z)$ is not Hausdorff. 
\end{lemma}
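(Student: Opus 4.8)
The plan is to compare the \v{C}ech--Dolbeault picture on $Z$ with the pure \v{C}ech picture on the Stein covering $\{W_j\}$, using that for a Stein covering Leray's theorem gives an \emph{algebraic} isomorphism $\check{H}^1(\{W_j\}, \mathcal{O}_Z)\cong H^1(Z, \mathcal{O}_Z)$, and then to promote this to a statement about the topologies. First I would recall the \v{C}ech--Dolbeault double complex: for each $j$ choose the Dolbeault resolution of $\mathcal{O}_Z$ and form the total complex of $\check{C}^\bullet(\{W_j\}, \mathcal{A}^{0,\bullet})$. One gets a natural continuous cochain map from the \v{C}ech complex $\check{C}^\bullet(\{W_j\}, \mathcal{O}_Z)$ into the total complex (inclusion of the $q=0$ column, i.e.\ holomorphic cochains sitting inside smooth ones), and a natural continuous cochain map from the Dolbeault complex $A^{0,\bullet}(Z)$ into the total complex (inclusion of the row of global forms, via restriction to the $W_j$). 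Both are quasi-isomorphisms in the algebraic sense; the point is to track degree $1$.

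The key step is to realize the connecting maps concretely in degree $1$ so that one can follow a non-convergent sequence. Suppose $H^1(\{W_j\}, \mathcal{O}_Z)$ is not Hausdorff: then $\check{B}^1(\{W_j\}, \mathcal{O}_Z)$ is not closed in $\check{Z}^1(\{W_j\}, \mathcal{O}_Z)$, so there is a sequence of coboundaries $\delta c_k$ (with $c_k\in \check{C}^0$, i.e.\ a holomorphic function $f_{j,k}$ on each $W_j$) converging in $\check{C}^1$ to a cocycle $\xi=(\xi_{jk})$ which is \emph{not} a \v{C}ech coboundary. I would transfer $\xi$ to a Dolbeault class: take a smooth partition of unity $\{\rho_j\}$ subordinate to $\{W_j\}$, set $g_j:=\sum_m \rho_m \xi_{jm}$ on $W_j$ (the standard splitting), so $g_j-g_k=\xi_{jk}$ and hence $\bar\partial g_j=\bar\partial g_k$ glue to a global $\bar\partial$-closed $(0,1)$-form $\omega_\xi\in Z^{0,1}(Z)$. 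Doing the same to each coboundary $\delta c_k$ produces $\omega_{\delta c_k}$, and because $\delta c_k$ is already a \v{C}ech coboundary of holomorphic cochains, $\omega_{\delta c_k}=\bar\partial(\sum_m\rho_m f_{m,k}\,\text{-type expression})$ lies in $B^{0,1}(Z)$ — more precisely $\omega_{\delta c_k}=\bar\partial h_k$ where $h_k$ is built linearly and continuously from $c_k$. The map $\xi\mapsto\omega_\xi$ is continuous from $\check{C}^1$ (uniform convergence on compacts) to $A^{0,1}(Z)$ (same topology, now including derivatives, but $\bar\partial$ applied once is still continuous), because partition-of-unity multiplication and finite sums are continuous and $\bar\partial$ is continuous on Fréchet spaces of smooth forms. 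Therefore $\omega_{\delta c_k}\to\omega_\xi$ in $A^{0,1}(Z)$, each $\omega_{\delta c_k}\in B^{0,1}(Z)$, so $\omega_\xi\in\overline{B^{0,1}(Z)}$.

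It then remains to check $\omega_\xi\notin B^{0,1}(Z)$, for then $B^{0,1}(Z)$ is not closed in $Z^{0,1}(Z)$, i.e.\ $H^1(Z,\mathcal{O}_Z)$ is not Hausdorff, which is the conclusion. If $\omega_\xi=\bar\partial u$ for some $u\in A^{0,0}(Z)$, then on each $W_j$ the function $g_j-u$ is holomorphic, and $(g_j-u)-(g_k-u)=\xi_{jk}$, exhibiting $\xi$ as a \v{C}ech coboundary of \emph{holomorphic} cochains — contradicting the choice of $\xi$. (Here I use that $\xi$ not being a coboundary is exactly the statement that no such holomorphic $0$-cochain exists, and Leray's theorem is not even needed for this direction, only the definitions.) The main obstacle I anticipate is bookkeeping the topologies carefully: one must make sure the splitting map $\xi\mapsto(g_j)$ and then $(g_j)\mapsto\omega_\xi$ is genuinely continuous for the stated Fréchet topologies (uniform convergence on compacts in all derivatives), and that the approximating forms $\omega_{\delta c_k}$ land in $B^{0,1}(Z)$ with the \emph{same} linear recipe applied to $c_k$, so that convergence $\delta c_k\to\xi$ in $\check C^1$ really does force $\omega_{\delta c_k}\to\omega_\xi$ in $A^{0,1}(Z)$ — this is routine but is the crux of translating non-closedness from one model to the other. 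A clean way to organize all of this is to invoke the standard fact (e.g.\ as in the \v{C}ech--Dolbeault comparison in \cite[I 2.3]{SW} or the double-complex formalism) that the two natural maps into the total complex are \emph{topological} quasi-isomorphisms, reducing the lemma to the observation that a topological isomorphism carries a non-Hausdorff quotient to a non-Hausdorff quotient; but since the only claim needed is the one-way implication, I would present the explicit partition-of-unity argument above, which is self-contained.
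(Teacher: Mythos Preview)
Your proposal is correct and takes essentially the same approach as the paper: define the \v{C}ech--Dolbeault comparison map via a partition of unity, observe that it is continuous at the cochain level and induces a linear bijection on $H^1$, and conclude that non-Hausdorffness transfers. The paper's own proof is simply a terse version of yours---it writes down the map $F(\{f_{jk}\})|_{W_j}=-\sum_{\ell\neq j}f_{j\ell}\,\delbar\rho_\ell$, notes that $F$ is continuous and that the induced $\widehat{F}$ is a bijection, and stops there; you have unpacked the last step by explicitly producing the limit $\omega_\xi\in\overline{B^{0,1}(Z)}\setminus B^{0,1}(Z)$, which is fine and arguably clearer.
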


\begin{proof}
Fix a partition of unity $\{(W_j, \rho_j)\}$ subordinate to $\{W_j\}$. 
As is well-known, the map $F\colon \check{Z}^1(\{W_j\}, \mathcal{O}_Z)\to A^{0, 1}(Z)$ defined by letting $F(\{(W_{jk},\ f_{jk})\})$ be the $(0, 1)$-form such that
\[
F(\{(W_{jk},\ f_{jk})\})|_{W_j} = -\sum_{\ell \not=j}f_{j\ell}\cdot \delbar\rho_\ell
\]
holds on each $W_j$ induces a linear bijection $\widehat{F}\colon \check{H}^1(\{W_j\}, \mathcal{O}_Z)\to H^1(Z, \mathcal{O}_Z)$ (\v{C}ech--Dolbeault correspondence). 
As $F$ is continuous, $\widehat{F}$ is also continuous, from which the assertion follows. 
\end{proof}

As is explained in \S 1, our argument for proving Theorem \ref{thm:main} is based on 
the Mayer--Vietoris sequence for the open covering $\{M, V_1, V_2, \dots, V_N\}$ of $X$ 
and thus it is essential for us to investigate $H^1(V_\nu, \mathcal{O}_{V_\nu})$, $H^1(V_\nu^*, \mathcal{O}_{V_\nu^*})$, and the restriction map between them for each $\nu$. 

\subsection{Configuration and notations for a neighborhood of an elliptic curve}\label{section:kigou_settei}

Based on the observation in the previous subsection, 
in what follows (\S 3, 4, and 5) we will investigate the \v{C}ech cohomologies of $V$ and $V^*:=V\setminus Y$, and the natural map between them, for an elliptic curve $Y$ embedded in a surface and a suitable neighborhood $V$ of $Y$ which has the same properties as $Y_\nu$ and $V_\nu$ as in Proposition \ref{prop:existence_of_Vw_cptsurf}. 
As a preparation, we will clarify the assumption of $V$ and $Y$ and fix notations concerning on them in this subsection. 

Let $X$ be a non-singular complex surface, $Y$ be a non-singular elliptic curve holomorphically embedded in $X$, 
and $V$ be a neighborhood of $Y$ in $X$. 
Assume that $K_V^{-1}=[Y]$ holds on $V$ (i.e. there exists a meromorphic $2$-form $\eta$ on $V$ whose divisor is $-Y$), $N_{Y/V}$ is a non-torsion element of ${\rm Pic}^0(Y)$, and that there exist a finite covering $\{V_j\}$ of $V$ and holomorphic defining function $w_j$ of $U_j:=V_j\cap Y$ in each $V_j$ such that $w_j=t_{jk}w_k$ holds for some $t_{jk}\in {\rm U}(1)$ on each $V_{jk}:=V_j\cap V_k$. 
Note that $\{(V_j, \log |w_j|)\}$ patches to define a psh function on a neighborhood of $Y$. 
By shrinking $V$, we will assume that $V$ is a sublevel set of this function. 
Additionally, by shrinking $V$, changing the scaling of $w_j$, and taking a refinement of $\{V_j\}$, 
we will always assume the following in what follows: 
each $U_j$ is a coordinate open disc, 
each function $w_j$ is defined on a neighborhood of $\overline{V_j}$ and 
$V_j=\{(z_j, w_j)\mid z_j\in U_j, |w_j|<1\}\cong U_j\times \{w_j\in\mathbb{C}\mid |w_j|<1\}$ holds, 
$\{U_j\}$ is sufficiently fine so that $\check{H}^1(\{U_j\}, {\rm U}(1))\cong H^1(Y, {\rm U}(1))$ ($\cong {\rm Pic}^0(Y)$), and 
$U_{jk}:=U_j\cap U_k$ is empty if and only if so is $V_{jk}$. 
Denote by $V^*$ the complement $V\setminus Y$. 
We will let $V_j^*:=V_j\cap V^*$ and use $\{V_j^*\}$ as an open covering of $V^*$. 

Let $z_j$ be a local coordinate on $U_j$ which comes from the Euclidean coordinate of the universal covering of $Y$. Then one can take a constant $A_{jk}\in \mathbb{C}$ such that 
$z_j=A_{jk}+z_k$ holds on each $U_{jk}$. 
We will take a holomorphic extension of $z_j$ to $V_j$ and use $(z_j, w_j)$ as coodinates of $V_j$. Note that we will show by using the assumption $K_V^{-1}=[Y]$ that there is a suitable manner for extending $z_j$ in \S \ref{section:z_ext_K=-Y}. 
In \S 4 and 5, we will also assume that $z_j$ is extended in such a manner (More precisely, we will assume that $z_j$'s enjoy the property as in Proposition \ref{prop:H0_of_O/CF} $(1)$). 

Denote by $\mathcal{F}$ the holomorphic foliation on $V$ whose leaves are locally defined by $d w_j=0$ on each $V_j$, 
by $\mathcal{C}_{\mathcal{F}}$ the sheaf of holomorphic functions on open subsets of $V$ which are constant along leaves of $\mathcal{F}$. Note that each sections of $\mathcal{C}_{\mathcal{F}}$ can be locally regarded as a holomorphic function with one variable $w_j$ on a neighborhood of each point of $V_j$. 
Similarly, we will denote by $\mathcal{C}_{\mathcal{F}}^*$ the sheaf of holomorphic functions on open subsets of $V^*$ which are constant along leaves of $\mathcal{F}$. 

%
%

Note that, for holomorphic functions on $V$ and $V^*$, we have the following: 
\begin{lemma}\label{lem:nononconstantholfunctionexistsonV}
Any holomorphic function on $V^*$ is a constant function. 
Especially it holds that 
$H^0(V, \mathcal{O}_V)=H^0(V^*, \mathcal{O}_{V^*})\cong \mathbb{C}$. 
\end{lemma}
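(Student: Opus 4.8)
The plan is to analyze a holomorphic function on $V^*$ through its fibrewise Laurent expansion along the foliation $\mathcal{F}$, and to recognize the Laurent coefficients as holomorphic sections over $Y$ of the tensor powers of $N_{Y/V}$, all of which vanish except the $0$-th one because $N_{Y/V}$ is non-torsion. Concretely, I would take a holomorphic function $f$ on $V^*$ and expand it on each $V_j^*=\{(z_j,w_j)\mid z_j\in U_j,\ 0<|w_j|<1\}$: for fixed $z_j\in U_j$ the map $w_j\mapsto f(z_j,w_j)$ is holomorphic on the punctured unit disc, so $f=\sum_{n\in\mathbb{Z}}f_{j,n}(z_j)\,w_j^{\,n}$ with $f_{j,n}(z_j)=\frac{1}{2\pi\sqrt{-1}}\oint_{|w_j|=r}f(z_j,w_j)\,w_j^{-n-1}\,dw_j$. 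This is independent of $r\in(0,1)$ and, by differentiation under the integral sign, depends holomorphically on $z_j$, so each $f_{j,n}$ is a holomorphic function on $U_j\subset Y$ and the series converges locally uniformly.

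Next I would compare these expansions on the overlaps. On $V_{jk}^*$ one has $w_j=t_{jk}w_k$ and $z_j=A_{jk}+z_k$; substituting into the two expansions of $f$ and comparing coefficients of $w_k^{\,n}$ gives $f_{k,n}=t_{jk}^{\,n}f_{j,n}$ on $U_{jk}$ for every $n\in\mathbb{Z}$. Since $[Y]|_V$ has transition cocycle $\{t_{jk}\}$ and $\{U_j\}$ is fine enough that $\check{H}^1(\{U_j\},{\rm U}(1))\cong{\rm Pic}^0(Y)$, the collection $\{f_{j,n}\}_j$ is, for each fixed $n$, a holomorphic section over $Y$ of the flat line bundle with transition cocycle $\{t_{jk}^{-n}\}$, i.e.\ of $N_{Y/V}^{\otimes(-n)}=[Y]^{\otimes(-n)}|_Y$. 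For $n\neq0$ this bundle has degree $0$ and, by the non-torsion hypothesis on $N_{Y/V}$, is not holomorphically trivial; as a non-trivial degree-$0$ line bundle on an elliptic curve has no nonzero holomorphic section (a nonzero section would be nowhere vanishing, hence a trivialization), we get $f_{j,n}\equiv0$ for all $n\neq0$. For $n=0$ the transition functions are trivial, so $\{f_{j,0}\}$ is a global holomorphic function on the compact connected curve $Y$, hence a constant $c$ independent of $j$; therefore $f\equiv c$ on each $V_j^*$, and, $V^*=V\setminus Y$ being connected, $f\equiv c$ on $V^*$.

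The stated consequence $H^0(V,\mathcal{O}_V)=H^0(V^*,\mathcal{O}_{V^*})\cong\mathbb{C}$ then follows immediately: both spaces contain the constants, the restriction map $H^0(V,\mathcal{O}_V)\to H^0(V^*,\mathcal{O}_{V^*})$ is injective because $V$ is connected and $Y$ is nowhere dense, and the target is $\mathbb{C}$ by the first part. I do not anticipate any serious obstacle. The only points requiring care are the locally uniform Laurent expansion with holomorphically varying coefficients and the bookkeeping of transition functions needed to identify $\{f_{j,n}\}$ with a section of the correct tensor power of $N_{Y/V}$; the one genuine structural input is the classical vanishing $H^0(Y,L)=0$ for every non-trivial $L\in{\rm Pic}^0(Y)$, applied here to $L=N_{Y/V}^{\otimes(-n)}$ with $n\neq0$. (An alternative, slightly heavier route to the first assertion would use the Levi-flat hypersurfaces $\{H_t\}$ and Lemma \ref{lem:eachpshonHisconst} to force $|f|$ to depend only on $|w_j|$ near $Y$, but the Laurent-series argument is cleaner.)
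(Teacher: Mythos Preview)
Your argument has a genuine gap at the step where you compare Laurent expansions on overlaps. You assert that on $V_{jk}^*$ one has $z_j=A_{jk}+z_k$, but at this point of the paper only the relation $w_j=t_{jk}w_k$ holds on $V_{jk}$; the identity $z_j=A_{jk}+z_k$ is known only on $U_{jk}=V_{jk}\cap Y$. The paper is explicit that the extension of $z_j$ to $V_j$ is, at the stage of \S\ref{section:kigou_settei}, arbitrary, and that a good extension is constructed only later: Proposition~\ref{prop:H0_of_O/CF}\,$(i)$ gives $-z_j+z_k\in\Gamma(V_{jk},\mathcal{C}_{\mathcal{F}})$ (a function of $w$ alone, but not a constant), and Proposition~\ref{prop:hol_tub_nbhd_aru} upgrades this to $z_j-z_k\equiv A_{jk}$ \emph{only in Cases I and I\!I}. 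In Case~I\!I\!I no holomorphic tubular neighborhood need exist, so the transverse coordinates cannot be made compatible, and your coefficient comparison $f_{k,n}=t_{jk}^{\,n}f_{j,n}$ simply does not follow: since $z_j$ depends on $w_k$ through the transition, the term $f_{j,n}(z_j)\,t_{jk}^n w_k^n$ contributes to all powers of $w_k$, not just the $n$-th. This is precisely the obstruction discussed in Remark~\ref{rmk:keisikikai_C_F_or_holtubnbhd}, where the paper explains why such coefficient-by-coefficient arguments for $\mathcal{O}_V$ require the tubular-neighborhood hypothesis (the extra term $\frac{\partial z_j}{\partial w_k}\cdot\frac{\partial}{\partial z_j}$ spoils the transformation law).

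The paper's own proof avoids this by the route you mention last and dismiss as ``slightly heavier'': the level sets $H_t=\{|w|=t\}$ are compact Levi-flat hypersurfaces with dense leaves (because $N_{Y/V}$ is non-torsion), so applying the maximum-principle argument of Lemma~\ref{lem:eachpshonHisconst} to $\pm\mathrm{Re}\,f$ and $\pm\mathrm{Im}\,f$ forces $f$ to be constant on each $H_t$; a holomorphic function depending only on $|w_j|$ is then constant. This argument uses only $w_j=t_{jk}w_k$ and never touches the transverse coordinate, which is why it works uniformly in all three Cases. Your Laurent-series approach would be perfectly valid \emph{after} one knows $V$ is a holomorphic tubular neighborhood, but since that fact is established later and is not available in Case~I\!I\!I, it cannot serve as the proof here.
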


\begin{proof}
As each levelsets of the psh function $\{(V_j, \log|w_j|)\}$ is a compact Levi-flat hypersurface such that each leaf is dense, the assertion follows from a standard argument as in the proof of Lemma \ref{lem:eachpshonHisconst} (see also \cite[Lemma 2.2]{KU}). 
\end{proof}

We divide the situation in to three cases in accordance with a dynamical or an irrational theoretical property of the normal bundle $N_{Y/V}$ of $Y$ as a point of ${\rm Pic}^0(Y)$ with respect to the distance $d$ on ${\rm Pic}^0(Y)$ which is induced from the Euclidean norm of the universal covering of ${\rm Pic}^0(Y)$: 
\begin{description}
\item[Case I] For any real number $R$ which is larger than $1$, there exists a positive number $M_R$ such that $1/d(\mathbb{I}_Y, N_{Y/X}^n)\leq M_R\cdot R^n$ holds for any positive integer $n$. 
\item[Case I\!I] Not in Case I, and there exist positive numbers $R$ and $M_R$ with $R>1$ such that $1/d(\mathbb{I}_Y, N_{Y/X}^n)\leq M_R\cdot R^n$ holds for any positive integer $n$. 
\item[Case I\!I\!I] Neither in Case I nor I\!I: i.e. for any positive numbers $R$ and $M$ with $R>1$, $1/d(\mathbb{I}_Y, N_{Y/X}^n)> M\cdot R^n$ holds for some positive integer $n$. 
\end{description}
Here $\mathbb{I}_Y$ denotes the holomorphically trivial line bundle on $Y$. 
The conditions in each of the cases are applied to the arguments for investigating $V$ and $V^*$ via Ueda type estimates for the \v{C}ech coboundary map (see the next subsection for the details). 

We say that $V$ is a {\it holomoprhic tubular neighborhood} of $Y$ if there exists a biholomorphism between $V$ and a neighborhood of the zero section in the total space of $N_{Y/V}$ which maps $Y$ to the zero section. 
As is simply observed, $V$ is a holomoprhic tubular neighborhood if and only if $z_j$'s can be extended so that $z_j=A_{jk}+z_k$ holds (not only on $U_{jk}$ but also) on $V_{jk}$ for each $j$ and $k$. 
In Case I and I\!I, we will show that one can shrink $V$ so that it is a holomorphic tubular neighborhood of $Y$ (Proposition \ref{prop:hol_tub_nbhd_aru}). 
In such cases, by using $z_j$'s which satisfy $z_j=A_{jk}+z_k$ on each $V_{jk}$ and by using a suitable $\{V_j\}$ (see Remark \ref{rmk:niceVjs_whenCaseI_II} below), the following clearly holds: For any \v{C}ech $1$-cocycle $\{(V_{jk}, g_{jk})\}\in \check{Z}^1(\{V_j\}, \mathcal{O}_V)$ and any coefficient function $g_{jk, n}\colon U_j\to \mathbb{C}$ of the expansion
\[
g_{jk}(z_j, w_j) = \sum_{n=0}^\infty g_{jk, n}(z_j)\cdot w_j^n, 
\]
it holds that $\{(U_{jk}, g_{jk, n})\}$ is an element of $\check{Z}^1(\{U_j\}, \mathcal{O}_Y(N_{Y/V}^{-n}))$ (See also Remark \ref{rmk:keisikikai_C_F_or_holtubnbhd}). 
By using this fact, we can reduce the study of $\check{H}^1(\{V_j\}, \mathcal{O}_V)$ to the estimate of the operator norms of the coboudary maps $\delta\colon \check{B}^1(\{U_j\}, \mathcal{O}_Y(N_{Y/X}^{-n}))\to \check{Z}^1(\{U_j\}, \mathcal{O}_Y(N_{Y/X}^{-n}))$. 
Even when we can not assure that $V$ is a holomorphic tubular neighbohood of $Y$, the same type arguments can be applied to the study of $\check{H}^1(\{V_j\}, \mathcal{C}_{\mathcal{F}})$, as we will see in \S 4. 
The relation between $\check{H}^1(\{V_j\}, \mathcal{O}_V)$ and $\check{H}^1(\{V_j\}, \mathcal{C}_{\mathcal{F}})$ can be observed by using the long exact sequence 
\[
0 \to H^0(V, \mathcal{C}_{\mathcal{F}}) \to H^0(V, \mathcal{O}_V) \to H^0(V, \mathcal{O}_V/\mathcal{C}_{\mathcal{F}}) 
\to \check{H}^1(\{V_j\}, \mathcal{C}_{\mathcal{F}}) \to H^1(V, \mathcal{O}_V)\to\cdots 
\]
induced from the short exact sequence 
$0 \to \mathcal{C}_{\mathcal{F}} \to \mathcal{O}_V \to \mathcal{O}_V/\mathcal{C}_{\mathcal{F}} \to 0$. 
From 
Lemma \ref{lem:nononconstantholfunctionexistsonV}, 
we obtain an exact sequence
\begin{equation}\label{eq:main_exact_seq}
0 \to H^0(V, \mathcal{O}_V/\mathcal{C}_{\mathcal{F}}) 
\to \check{H}^1(\{V_j\}, \mathcal{C}_{\mathcal{F}}) \to H^1(V, \mathcal{O}_V). 
\end{equation}
Similarly we obtain an exact sequence
\begin{equation}\label{eq:main_exact_seq_*version}
0 \to H^0(V^*, \mathcal{O}_{V^*}/\mathcal{C}_{\mathcal{F}}^*) 
\to \check{H}^1(\{V^*_j\}, \mathcal{C}_{\mathcal{F}}^*) \to H^1(V^*, \mathcal{O}_{V^*}). 
\end{equation}
As will be seen in \S \ref{section:z_ext_K=-Y}, we can determine $H^0(V, \mathcal{O}_V/\mathcal{C}_{\mathcal{F}})$ and $H^0(V^*, \mathcal{O}_{V^*}/\mathcal{C}_{\mathcal{F}}^*)$. 

\begin{remark}\label{rmk:niceVjs_whenCaseI_II}
As is mentioned above, we will show that one can shrink $V$ so that it is a holomorphic tubular neighborhood of $Y$ in Case I and I\!I (Proposition \ref{prop:hol_tub_nbhd_aru}). 
After it is verified, we alway use the following covering and coordinates whenever we work in Case I and I\!I: 
For a sufficiently fine open covering $\{U_j\}$ of $Y$ and coordinates $z_j$'s which satisfies $z_j=A_{jk}+z_k$ for $A_{jk}\in\mathbb{C}$ on each $U_{jk}$, we let $V_j:=p^{-1}(U_j)$ and use the extension $z_j\circ p$ of $z_j$, where $p\colon V\to Y$ is the holomorphic retraction which corresponds to the projection $N_{Y/V}\to Y$. 
Note that, by denoting also by $z_j$ the extension $z_j\circ p$ and by using $(z_j, w_j)$ as coordinates on $V_j$, it is clear that $V_j\cong U_j\times \{w_j\in \mathbb{C}\mid |w_j|<1\}$, $V_{jk}\cong U_{jk}\times \{w_j\in \mathbb{C}\mid |w_j|<1\}$. 
\end{remark}

\subsection{Ueda type estimates}\label{section:ueda_lemma_const}
Let $Y$ be an smooth elliptic curve and $\{U_j\}$ be a finite open covering as in the previous subsection. 
For each $F\in {\rm Pic}^0(Y)\setminus\{\mathbb{I}_Y\}$, denote by $K(F)$ he constant
\[
\sup\left\{\left.\frac{\max_j \sup_{U_j}|f_j|}{\max_{j, k}\sup_{U_{jk}}|f_j-s_{jk}\cdot f_k|}\,\right|\,\{(U_j, f_j)\}\in\check{C}^0(\{U_j\}, \mathcal{O}_Y(F))\setminus\{0\},\ \max_j \sup_{U_j}|f_j|<\infty\right\}, 
\]
where $s_{jk}$'s are elements of ${\rm U}(1)$ such that $[\{(U_{jk}, s_{jk})\}]=F$ holds as elements of $\check{H}^1(\{U_j\}, {\rm U}(1))$. 
Note that, as is simply observed, the definition of $K(F)$ does not depend on the choice of such $s_{jk}$'s. 
Note also that the denominator $\max_{j, k}|f_j-s_{jk}\cdot f_k|$ is not zero for any $\{(U_j, f_j)\}\in\check{C}^0(\{U_j\}, \mathcal{O}_Y(F))\setminus\{0\}$, since $H^0(Y, \mathcal{O}_Y(F))=0$ (See \cite[Lemma 2.3]{HK} for example). 
Similarly, 
\[
K_{\rm const}(F) := \sup\left\{\left.\frac{\max_j |f_j|}{\max_{j, k}|f_j-s_{jk}\cdot f_k|}\,\right|\,\{(U_j, f_j)\}\in\check{C}^0(\{U_j\}, \mathbb{C}(F))\setminus\{0\}\right\}
\]
is also well-defined for each $F= [\{(U_{jk}, s_{jk})\}]\in \check{H}^1(\{U_j\}, {\rm U}(1))\setminus\{\mathbb{I}_Y\}$, where $\mathbb{C}(F)$ denotes the sheaf of locally constant sections of $F$ with respect to the unitary flat structure of $F$. 

For these constants, the following holds:
\begin{lemma}\label{lem:ueda_type_estim_K_Kconst_explicitly}
For any $F\in {\rm Pic}^0(Y)\setminus\{\mathbb{I}_Y\}$, the following holds: \\
$(i)$ For any $\{(U_{jk}, g_{jk})\}\in\check{B}^1(\{U_j\}, \mathcal{O}_Y(F))$, there uniquely exists an element $\{(U_j, f_j)\}\in\check{C}^0(\{U_j\}, \mathcal{O}_Y(F))$ such that $\delta(\{(U_j, f_j)\})=\{(U_{jk}, g_{jk})\}$ holds. Moreover, 
\[
\max_j \sup_{U_j}|f_j|\leq K(F)\cdot \max_{j, k}\sup_{U_{jk}}|g_{jk}|
\]
holds. \\
$(ii)$ For any $\{(U_{jk}, g_{jk})\}\in\check{B}^1(\{U_j\}, \mathbb{C}(F))$, there uniquely exists an element $\{(U_j, f_j)\}\in\check{C}^0(\{U_j\}, \mathbb{C}(F))$ such that $\delta(\{(U_j, f_j)\})=\{(U_{jk}, g_{jk})\}$ holds. Moreover, 
\[
\max_j |f_j|\leq K_{\rm const}(F)\cdot \max_{j, k}|g_{jk}|
\]
holds. 
\end{lemma}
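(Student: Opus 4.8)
The statement is essentially a reformulation of the definition of the constants $K(F)$ and $K_{\rm const}(F)$ together with the vanishing $H^0(Y, \mathcal{O}_Y(F))=0$ (and the analogous vanishing for $\mathbb{C}(F)$), so the plan is to unwind the definitions carefully and check that the supremum is actually attained as a bound. I will treat $(i)$ in detail; $(ii)$ is identical with $\mathcal{O}_Y(F)$ replaced by $\mathbb{C}(F)$ and sup-norms replaced by the (finite) maxima over the $U_j$'s.

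First I would address \emph{uniqueness}. Suppose $\{(U_j, f_j)\}$ and $\{(U_j, f_j')\}$ both have \v{C}ech coboundary $\{(U_{jk}, g_{jk})\}$. Then $\{(U_j, f_j - f_j')\}$ is a \v{C}ech $0$-cocycle for $\mathcal{O}_Y(F)$, i.e. an element of $H^0(Y, \mathcal{O}_Y(F))$, which is zero since $F\in{\rm Pic}^0(Y)\setminus\{\mathbb{I}_Y\}$ is a non-trivial degree-zero line bundle on the elliptic curve $Y$ (as recalled in the excerpt via \cite[Lemma 2.3]{HK}); hence $f_j = f_j'$ for all $j$. The same argument with $\mathbb{C}(F)$ — whose global sections also vanish, since a nonzero flat section would give a nonzero holomorphic section of $F$ — gives uniqueness in $(ii)$.

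Next, \emph{existence and the norm estimate}. By hypothesis $\{(U_{jk},g_{jk})\}\in\check{B}^1(\{U_j\},\mathcal{O}_Y(F))$, so by definition there is \emph{some} $\{(U_j,f_j)\}\in\check{C}^0(\{U_j\},\mathcal{O}_Y(F))$ with $\delta(\{(U_j,f_j)\})=\{(U_{jk},g_{jk})\}$; by uniqueness this $f_j$ is the one in the statement, and since each $U_j$ is relatively compact in its chart (the covering being the finite one fixed in \S\ref{section:kigou_settei}) we have $\max_j\sup_{U_j}|f_j|<\infty$, so $\{(U_j,f_j)\}$ lies in the admissible set defining $K(F)$. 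If $\{(U_j,f_j)\}=0$ the estimate is trivial; otherwise, plugging $\{(U_j,f_j)\}$ into the supremum defining $K(F)$ gives immediately
\[
\frac{\max_j\sup_{U_j}|f_j|}{\max_{j,k}\sup_{U_{jk}}|f_j - s_{jk}f_k|}\le K(F),
\]
and since $f_j - s_{jk}f_k = g_{jk}$ on $U_{jk}$ (this is exactly the meaning of $\delta(\{(U_j,f_j)\})=\{(U_{jk},g_{jk})\}$, with the twisting cocycle $s_{jk}$ representing $F$), the denominator equals $\max_{j,k}\sup_{U_{jk}}|g_{jk}|$, which is what we wanted. For $(ii)$ the same computation applies verbatim with $K_{\rm const}(F)$, noting that on the finite cover all the relevant sup's are plain maxima of constants and hence automatically finite, so no finiteness caveat is needed.

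I do not expect a serious obstacle here: the only points requiring a moment's care are (a) making sure the candidate $\{(U_j,f_j)\}$ produced by "is a coboundary" genuinely belongs to the class of $0$-cochains over which the supremum $K(F)$ is taken — which is where the relative compactness of the $U_j$'s in the fixed finite cover is used to guarantee $\max_j\sup_{U_j}|f_j|<\infty$ — and (b) recalling that the twisting factors $s_{jk}$ in the coboundary operator $\delta$ for $\mathcal{O}_Y(F)$ are precisely the unitary cocycle representing $F$, so that $(\delta f)_{jk}=f_j - s_{jk}f_k$ matches the denominator in the definition of $K(F)$; the independence of the conclusion from the choice of $s_{jk}$ is already noted in the text. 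Everything else is a direct substitution into the definitions.
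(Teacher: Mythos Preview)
Your proof is correct and follows exactly the same approach as the paper's own proof, which is a single sentence noting that the lemma follows from the definitions of $K(F)$ and $K_{\rm const}(F)$ together with the vanishing $H^0(Y,\mathcal{O}_Y(F))=H^0(Y,\mathbb{C}(F))=0$ for uniqueness. Your write-up simply spells out these two ingredients in more detail; the one extra care you take (point (a), ensuring the primitive lies in the admissible class for the supremum) is not addressed in the paper's terse argument either, so you are if anything slightly more thorough.
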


\begin{proof}
The lemma follows from the definition of $K(F)$ and $K_{\rm const}(F)$, 
since it follows from $H^0(Y, \mathcal{O}_Y(F))=0$ and $H^0(Y, \mathbb{C}(F))=0$ that the solution of $\{(U_j, f_j)\}$ of $\delta$-equation is unique. 
\end{proof}

Note that we can replace $\check{B}^1(\{U_j\}, \mathcal{O}_Y(F))$ and $\check{B}^1(\{U_j\}, \mathbb{C}(F))$ in Lemma \ref{lem:ueda_type_estim_K_Kconst_explicitly} with $\check{Z}^1(\{U_j\}, \mathcal{O}_Y(F))$ and $\check{Z}^1(\{U_j\}, \mathbb{C}(F))$, respectively, under our assumption that $Y$ is an elliptic curve by virtue of the following:
\begin{lemma}\label{lem:coh_of_ellipt_curve_OCF}
For any $F\in {\rm Pic}^0(Y)\setminus\{\mathbb{I}_Y\}$, $\check{H}^1(\{U_j\}, \mathcal{O}_Y(F))=\check{H}^1(\{U_j\}, \mathbb{C}(F))=0$. 
\end{lemma}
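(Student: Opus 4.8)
For any $F\in{\rm Pic}^0(Y)\setminus\{\mathbb{I}_Y\}$, we have $\check H^1(\{U_j\},\mathcal O_Y(F))=\check H^1(\{U_j\},\mathbb C(F))=0$.

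The plan is to reduce to ordinary sheaf cohomology of $Y$ and then invoke standard vanishing. Since the canonical map $\check{H}^1(\{U_j\},\mathcal{S})\to H^1(Y,\mathcal{S})$ is injective for any abelian sheaf $\mathcal{S}$ and any open covering, it suffices to prove $H^1(Y,\mathcal{O}_Y(F))=0$ and $H^1(Y,\mathbb{C}(F))=0$. (Alternatively, one checks directly that $\{U_j\}$ is a Leray covering for both sheaves, as each non-empty finite intersection $U_{j_0}\cap\cdots\cap U_{j_p}$ is, via the coordinate $z_{j_0}$, a Stein open subset of $\mathbb{C}$ with contractible components, hence acyclic for coherent and for locally constant sheaves alike.)

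For $\mathcal{O}_Y(F)$: since $Y$ is an elliptic curve, $K_Y\cong\mathcal{O}_Y$, so Serre duality gives $H^1(Y,\mathcal{O}_Y(F))\cong H^0(Y,\mathcal{O}_Y(F^{-1}))^*$. As $F\in{\rm Pic}^0(Y)\setminus\{\mathbb{I}_Y\}$, the bundle $F^{-1}$ also lies in ${\rm Pic}^0(Y)\setminus\{\mathbb{I}_Y\}$, and a non-zero holomorphic section of a degree-zero line bundle is nowhere vanishing, hence would trivialize it; so $H^0(Y,\mathcal{O}_Y(F^{-1}))=0$ and therefore $H^1(Y,\mathcal{O}_Y(F))=0$.

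For $\mathbb{C}(F)$: this is the rank-one local system associated with the non-trivial unitary character $\rho_F\colon\pi_1(Y)\cong\mathbb{Z}^2\to{\rm U}(1)$, so $H^*(Y,\mathbb{C}(F))$ is the group cohomology $H^*(\mathbb{Z}^2,\mathbb{C}_{\rho_F})$. One has $H^0(Y,\mathbb{C}(F))=0$ since a non-trivial character has no non-zero invariant vectors, and $H^2(Y,\mathbb{C}(F))\cong H^0(Y,\mathbb{C}(F^{-1}))^*=0$ by Poincar\'e duality for local systems (again $F^{-1}\not=\mathbb{I}_Y$). Since the Euler characteristic of a rank-one local system on $Y$ equals ${\rm rk}\cdot e(Y)=0$, it follows that $H^1(Y,\mathbb{C}(F))=0$. (Equivalently, feeding $\mathbb{C}_{\rho_F}$ into the Koszul resolution of $\mathbb{Z}$ over $\mathbb{Z}[\mathbb{Z}^2]$ shows every cohomology group dies once $\rho_F\not\equiv 1$.)

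I do not expect a real obstacle: the arguments are the standard computations of Serre duality on an elliptic curve and of the cohomology of a non-trivial rank-one local system on a real two-torus. The only point deserving a line of care is the passage from \v{C}ech to sheaf cohomology, which is handled either by the injectivity of $\check{H}^1\to H^1$ or, more concretely, by the Leray property of $\{U_j\}$ noted above.
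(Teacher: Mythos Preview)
Your proof is correct. The treatment of $\mathcal{O}_Y(F)$ is essentially the same as the paper's: the paper invokes $H^0(Y,\mathcal{O}_Y(F))=0$ together with Riemann--Roch, which is the same computation as your Serre duality step since $K_Y$ is trivial.

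For $\mathbb{C}(F)$, however, you take a genuinely different route. The paper uses the short exact sequence
\[
0\to\mathbb{C}(F)\to\mathcal{O}_Y(F)\oplus\overline{\mathcal{O}_Y}(F)\to\mathcal{H}_Y(F)\to 0
\]
of locally constant, holomorphic/antiholomorphic, and pluriharmonic sections (following Ueda \cite{U}), from which one obtains an injection $\check{H}^1(\{U_j\},\mathbb{C}(F))\hookrightarrow\check{H}^1(\{U_j\},\mathcal{O}_Y(F))\oplus\check{H}^1(\{U_j\},\overline{\mathcal{O}_Y}(F))$; the right-hand side then vanishes by the holomorphic argument applied to $F$ and to $\overline{F}$. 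Your argument is purely topological: you compute the cohomology of a nontrivial rank-one local system on the torus via Poincar\'e duality and the Euler characteristic (or equivalently group cohomology of $\mathbb{Z}^2$). Both are short and valid; the paper's approach stays within the complex-analytic framework already in play and reuses the Riemann--Roch vanishing, while yours is more self-contained and avoids introducing the pluriharmonic sheaf.
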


\begin{proof}
Let $F$ be an element of ${\rm Pic}^0(Y)\setminus\{\mathbb{I}_Y\}$. 
Then the vanishing of $\check{H}^1(\{U_j\}, \mathcal{O}_Y(F))\ (=H^1(Y, \mathcal{O}_Y(F)))$ follows from $H^0(Y, \mathcal{O}_Y(F))=0$ and Riemann–Roch theorem. Consider the short exact sequence 
\[
0 \to \mathbb{C}(F) \to \mathcal{O}_Y(F)\oplus \overline{\mathcal{O}_Y}(F) \to \mathcal{H}_Y(F)\to 0
\]
of sheaves on $Y$, 
where $\overline{\mathcal{O}_Y}(F)$ and $\mathcal{H}_Y(F)$ are the sheaves of antiholomoprhic and pluriharmonic sections of $F$, respectively. 
From this short exact sequence, it follows from a simple argument that 
\[
0\to \check{H}^1(\{U_j\}, \mathbb{C}(F)) \to \check{H}^1(\{U_j\}, \mathcal{O}_Y(F)) \oplus \check{H}^1(\{U_j\}, \overline{\mathcal{O}_Y}(F)) 
\]
is exact (See \cite[\S 1.2]{U} for the details). As $\check{H}^1(\{U_j\}, \mathcal{O}_Y(F))=0$ and 
\[
\check{H}^1(\{U_j\}, \overline{\mathcal{O}_Y}(F)) =\overline{\check{H}^1(\{U_j\}, \mathcal{O}_Y(\overline{F}))}=0
\]
holds (here we again applied Riemann–Roch theorem to the non-trivial unitary flat line bundle $\overline{F}$), the assertion holds. 
\end{proof}

From Ueda's estimate \cite[Lemma 4]{U}, we have the following: 
\begin{proposition}\label{prop:ueda_lemma}
There exist positive constants $K_1$ and $K_2$ such that  
\[
\frac{K_1}{d(\mathbb{I}_Y, F)} \leq K_{\rm const}(F) \leq K(F) \leq \frac{K_2}{d(\mathbb{I}_Y, F)}, 
\]
holds for any $F\in {\rm Pic}^0(Y)\setminus\{\mathbb{I}_Y\}$. 
\end{proposition}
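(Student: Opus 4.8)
The plan is to reduce the estimate to Ueda's lemma \cite[Lemma 4]{U}, which controls the \v{C}ech coboundary on a fixed elliptic curve in terms of the distance of the flat line bundle from the trivial one. The upper bound $K(F)\le K_2/d(\mathbb{I}_Y,F)$ is essentially the content of Ueda's estimate: for a $0$-cochain $\{(U_j,f_j)\}$ with $\max_j\sup_{U_j}|f_j|<\infty$ and coboundary $\{(U_{jk},f_j-s_{jk}f_k)\}$, Ueda's lemma gives a bound of the shape $\max_j\sup|f_j|\le (K_2/d(\mathbb{I}_Y,F))\cdot\max_{j,k}\sup|f_j-s_{jk}f_k|$, with $K_2$ depending only on the fixed covering $\{U_j\}$ of $Y$ (and on the choice of Euclidean norm on $H^1(Y,\mathcal{O}_Y)$, hence on $d$). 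Taking the supremum over all such nonzero cochains yields $K(F)\le K_2/d(\mathbb{I}_Y,F)$. The middle inequality $K_{\rm const}(F)\le K(F)$ is immediate from the definitions: a locally constant section of $F$ (with respect to the unitary flat structure) is in particular a holomorphic section, the constant sup-norm of a constant function on $U_j$ equals its sup over $U_j$, and likewise $\max_{j,k}|f_j-s_{jk}f_k|=\max_{j,k}\sup_{U_{jk}}|f_j-s_{jk}f_k|$ for locally constant data, so the supremum defining $K_{\rm const}(F)$ is taken over a subset of the data defining $K(F)$.

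The remaining point is the lower bound $K_1/d(\mathbb{I}_Y,F)\le K_{\rm const}(F)$. Here I would exhibit, for each $F\in {\rm Pic}^0(Y)\setminus\{\mathbb{I}_Y\}$, a single nonzero $0$-cochain of locally constant sections whose coboundary is small compared to its sup-norm. Working on the universal cover $\mathbb{C}$ of $Y$ with lattice $\Lambda$ and writing $F$ via a representative $s_{jk}=\exp(2\pi\sqrt{-1}c_{jk})$ with $\{c_{jk}\}$ a $\check{C}^1$-cocycle representing (the image of) a vector $v_F\in H^1(Y,\mathcal{O}_Y)/(\text{period lattice})$ whose Euclidean norm is $d(\mathbb{I}_Y,F)$, one can take $f_j$ to be the locally constant function on $U_j$ obtained from an affine primitive of $\{c_{jk}\}$ on the cover, rescaled to have sup-norm $1$; then $f_j-s_{jk}f_k$ is comparable to $|1-s_{jk}|$, which near the trivial bundle is comparable to $|c_{jk}|$, which in turn is $O(d(\mathbb{I}_Y,F))$ uniformly over the finitely many pairs $(j,k)$. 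This produces $\max_j|f_j|=1$ and $\max_{j,k}|f_j-s_{jk}f_k|\le C\cdot d(\mathbb{I}_Y,F)$ for a constant $C$ depending only on $\{U_j\}$, giving $K_{\rm const}(F)\ge 1/(C\,d(\mathbb{I}_Y,F))$. (For $F$ bounded away from $\mathbb{I}_Y$ the lower bound is trivial after adjusting the constant, since $K_{\rm const}(F)$ is bounded below by a positive constant there by compactness of the relevant parameter space and the fact that $K_{\rm const}(F)>0$ always, because $H^0(Y,\mathbb{C}(F))=0$.)

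The main obstacle is making the lower-bound construction uniform in $F$ as $F\to \mathbb{I}_Y$: one must produce the test $0$-cochain so that its sup-norm stays bounded below while its coboundary is genuinely $O(d(\mathbb{I}_Y,F))$, with the implicit constant independent of $F$. This is handled by choosing the primitive of $\{c_{jk}\}$ on a fixed fundamental domain and normalizing, so that all estimates take place on the fixed finite combinatorial data $\{U_j\},\{U_{jk}\}$; the only $F$-dependence is through the small parameters $c_{jk}$, and $|1-\exp(2\pi\sqrt{-1}c_{jk})|$ is comparable to $2\pi|c_{jk}|$ uniformly once $|c_{jk}|$ is small. Since $d$ is precisely the distance induced from the Euclidean norm on the universal cover $H^1(Y,\mathcal{O}_Y)$ of ${\rm Pic}^0(Y)$, the quantity $\max_{j,k}|c_{jk}|$ is comparable to $d(\mathbb{I}_Y,F)$ for $F$ near $\mathbb{I}_Y$, which closes the argument.
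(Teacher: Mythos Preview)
Your treatment of the upper bound $K(F)\leq K_2/d(\mathbb{I}_Y,F)$ via \cite[Lemma 4]{U} and of the middle inequality $K_{\rm const}(F)\leq K(F)$ matches the paper's proof exactly. For the lower bound you take a different, more explicit route than the paper. The paper passes instead to Ueda's invariant distance $d_{\rm Ueda}$ from \cite[\S 4.5]{U}, whose very definition is
\[
d_{\rm Ueda}(\mathbb{I}_Y,F)=\inf\Bigl\{\max_{j,k}|f_j-s_{jk}f_k|\ :\ \{(U_j,f_j)\}\in\check{C}^0(\{U_j\},{\rm U}(1))\Bigr\},
\]
so a unimodular test cochain with $\max_{j,k}|f_j-s_{jk}f_k|<2\,d_{\rm Ueda}(\mathbb{I}_Y,F)$ exists for free, yielding $K_{\rm const}(F)\geq 1/(2\,d_{\rm Ueda}(\mathbb{I}_Y,F))$ in one line; the Lipschitz equivalence of $d$ and $d_{\rm Ueda}$ (\cite[\S A.3]{KU}) then closes the argument uniformly in $F$ with no near/far case split. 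Your approach trades this appeal to $d_{\rm Ueda}$ for an explicit construction, which is a legitimate alternative but is over-engineered as written: the affine primitive is unnecessary, since taking $f_j\equiv 1$ together with representatives $s_{jk}=e^{2\pi i c_{jk}}$ chosen via a fixed linear section of the surjection from \v{C}ech cocycles onto $H^1(Y,\mathbb{R})$ already gives $\max_{j,k}|1-s_{jk}|\leq 2\pi\max_{j,k}|c_{jk}|\leq C\,d(\mathbb{I}_Y,F)$ for \emph{all} $F$, eliminating the case split. Note also a small gap in your far-from-$\mathbb{I}_Y$ argument: pointwise positivity of $K_{\rm const}(F)$ together with compactness does not yield a uniform lower bound without first establishing lower semicontinuity of $F\mapsto K_{\rm const}(F)$, which you do not address; the trivial cochain $f_j\equiv 1$ fills this as well, since $|1-s_{jk}|\leq 2$ gives $K_{\rm const}(F)\geq 1/2$ for every $F$.
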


\begin{proof}
Let us denote by $d_{\rm Ueda}$ the invariant distance on ${\rm Pic}^0(Y)$ which is constructed in \cite[\S 4.5]{U}. Note that 
\begin{equation}\label{eq:inv_distance}
d_{\rm Ueda}(\mathbb{I}_Y, F) = \inf\left\{ \left.\max_{j, k}|f_j-s_{jk}\cdot f_k| \right| \{(U_j, f_j)\}\in \check{C}^0(\{U_j\}, {\rm U}(1))\right\}
\end{equation}
holds for any $F= [\{(U_{jk}, s_{jk})\}]\in \check{H}^1(\{U_j\}, {\rm U}(1))$. 
As $d_{\rm Ueda}$ is Lipschitz equivalent to our distance $d$ (see \cite[\S A.3]{KU}), it is sufficient to show the assertion by replacing $d$ with $d_{\rm Ueda}$. 

The inequality $K_{\rm const}(F) \leq K(F)$ follows directly from their definitions. 
The existence of $K_2$ such that $K(F) \leq K_2/d_{\rm Ueda}(\mathbb{I}_Y, F)$ follows from \cite[Lemma 4]{U}. 
Therefore we will show the existence of $K_1$ such that $K_1/d_{\rm Ueda}(\mathbb{I}_Y, F) \leq K_{\rm const}(F)$ holds in what follows. 

Take $F= [\{(U_{jk}, s_{jk})\}]\in \check{H}^1(\{U_j\}, {\rm U}(1))$. 
By the equation (\ref{eq:inv_distance}), there exists an element $\{(U_j, f_j)\}\in \check{C}^0(\{U_j\}, {\rm U}(1))$ such that 
\[
\max_{j, k}|f_j-s_{jk}\cdot f_k| < 2\cdot d_{\rm Ueda}(\mathbb{I}_Y, F)
\]
holds. 
Then, by regarding $\{(U_j, f_j)\}$ as an element of $\check{C}^0(\{U_j\}, \mathbb{C}(F))$, it follows from the definition of $K_{\rm const}(F)$ that
\[
1 = \max_j |f_j| \leq  K_{\rm const}(F)\cdot \max_{j, k}|f_j-s_{jk}\cdot f_k|<K_{\rm const}(F)\cdot 2\cdot d_{\rm Ueda}(\mathbb{I}_Y, F)
\]
holds, from which the assertion follows.  
\end{proof}

\section{Some consequences from $K_V^{-1}=[Y]$}\label{section:z_ext_K=-Y}

We use the notation in \S \ref{section:kigou_settei}. 
In this section, we investigate the groups $H^0(V, \mathcal{O}_V/\mathcal{C}_{\mathcal{F}})$ and $H^0(V^*, \mathcal{O}_{V^*}/\mathcal{C}_{\mathcal{F}}^*)$ mainly by using the assumption that $K_V^{-1}=[Y]$. In what follows, let us fix a meromorphic $2$-form $\eta$ whose divisor is $-Y$. 

First let us show the following proposition for $H^0(V, \mathcal{O}_V/\mathcal{C}_{\mathcal{F}})$. 
\begin{proposition}\label{prop:H0_of_O/CF}
By changing the manner how to extend $z_j$ form $U_j$ to $V_j$ if necessary, the following holds: \\
$(i)$ It holds that $-z_j+z_k\in \Gamma(V_{jk}, \mathcal{C}_{\mathcal{F}})$ for each $j$ and $k$. \\
$(ii)$ $H^0(V, \mathcal{O}_V/\mathcal{C}_{\mathcal{F}})$ is a vector space of dimension $1$ which is generated by $\{(V_j, z_j\ {\rm mod}\ \mathcal{C}_{\mathcal{F}})\}$, 
where $z_j\ {\rm mod}\ \mathcal{C}_{\mathcal{F}}$ denotes the image of $z_j$ by the natural map $\Gamma(V_j, \mathcal{O}_V)\to \Gamma(V_j, \mathcal{O}_V/\mathcal{C}_{\mathcal{F}})$. 
\end{proposition}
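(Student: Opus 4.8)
The plan is to use the hypothesis $K_V^{-1}=[Y]$ to produce a privileged choice of the fiber coordinates $z_j$, and then to identify sections of $\mathcal{O}_V/\mathcal{C}_{\mathcal{F}}$ with leafwise-affine data. First I would analyze the meromorphic $2$-form $\eta$ with $\mathrm{div}(\eta)=-Y$: on each $V_j$, writing $\eta = \varepsilon_j(z_j,w_j)\, dz_j\wedge dw_j/w_j$ for a nowhere-vanishing holomorphic $\varepsilon_j$, the condition that $\eta$ is globally defined forces a cocycle relation among the $\varepsilon_j$'s on $V_{jk}$. Since $w_j=t_{jk}w_k$ with $t_{jk}\in\mathrm{U}(1)$ constant, one gets $\varepsilon_j\, dz_j = t_{jk}^{-1}\varepsilon_k\, dz_k \cdot (\text{Jacobian factor})$; the point is that this comparison is governed entirely by $dz_j$ versus $dz_k$ as $1$-forms along the leaves. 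The aim is to re-extend $z_j$ from $U_j$ into $V_j$ — still holomorphic, still a coordinate, still restricting to the Euclidean coordinate on $U_j$ — so that $dz_j = dz_k$ modulo $dw_j$, i.e. so that $\partial(z_j-z_k)/\partial z_k$ and the $w$-dependence are killed; concretely I would solve for the correction term order by order in $w_j$ using that $\varepsilon_j$ is a unit and absorbing the discrepancy into a leafwise-constant shift. Once $z_j - z_k$ has zero $dz_k$-component, $-z_j+z_k$ is annihilated by $dw_j$-type derivatives, hence lies in $\Gamma(V_{jk},\mathcal{C}_{\mathcal F})$, giving $(i)$.

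For $(ii)$, granting $(i)$, the collection $\{(V_j,\ z_j \bmod \mathcal{C}_{\mathcal F})\}$ is a well-defined global section of $\mathcal{O}_V/\mathcal{C}_{\mathcal F}$ because the transition discrepancies $z_j-z_k$ vanish in the quotient; this exhibits a nonzero element, so $\dim H^0(V,\mathcal O_V/\mathcal C_{\mathcal F})\ge 1$. For the reverse inequality I would take an arbitrary global section, represented by $h_j\in\Gamma(V_j,\mathcal O_V)$ with $h_j-h_k\in\Gamma(V_{jk},\mathcal C_{\mathcal F})$. Differentiating along the foliation (i.e. applying $d\,\cdot\,$ and projecting onto the $dz_j$-component, which is a well-defined operation precisely by $(i)$ since the $dz_j$'s agree mod $dw_j$), one obtains a global holomorphic object: the $dz$-components $(\partial h_j/\partial z_j)$ glue, because $\partial(h_j-h_k)/\partial z_j=0$. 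Now I would invoke Lemma \ref{lem:nononconstantholfunctionexistsonV}: any holomorphic function on $V$ is constant, so this glued derivative is a constant $c$. Then $h_j - c\,z_j$ has vanishing leafwise derivative, i.e. $h_j-c z_j\in\Gamma(V_j,\mathcal C_{\mathcal F})$, so the class of $\{h_j\}$ equals $c$ times the class of $\{z_j\}$. Hence $H^0(V,\mathcal O_V/\mathcal C_{\mathcal F})$ is one-dimensional, generated as claimed.

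A point needing care is that ``the $dz_j$-component of $dh$'' must be intrinsically defined on $V$, not just on each $V_j$; this is exactly what $(i)$ buys us, since the ambiguity in choosing a leafwise splitting of $\Omega^1_V$ is a $\mathcal{C}_{\mathcal F}$-multiple of $dw_j$, and $dz_j-dz_k$ being a section of the $\mathcal C_{\mathcal F}$-module generated by $dw$ means the projection is consistent across overlaps. I should also check that the corrected $z_j$ remains a genuine coordinate on (a possibly shrunk) $V_j$ — that follows since the correction vanishes to positive order in $w_j$ and to zeroth order agrees with the original $z_j$, so the differential is still invertible near $Y$ after shrinking.

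The main obstacle I expect is Step 1: producing the good extension of $z_j$. One must extract from $K_V^{-1}=[Y]$, i.e. from the single global $\eta$ with $\mathrm{div}(\eta)=-Y$, exactly the right normalization — showing that the obstruction to making $dz_j\equiv dz_k \pmod{dw}$ is unobstructed. This amounts to a formal power-series-in-$w_j$ recursion whose solvability hinges on the nonvanishing of $\varepsilon_j$ together with the vanishing $H^1(Y,\mathcal O_Y(N_{Y/V}^{-n}))=0$ for $n\ge 1$ (Lemma \ref{lem:coh_of_ellipt_curve_OCF}), so that at each order the needed leafwise-constant correction exists globally on $Y$; keeping track that the corrections assemble into an honest convergent holomorphic function on a neighborhood of $Y$, rather than merely a formal series, is the delicate part.
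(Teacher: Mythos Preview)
Your argument for $(ii)$ is correct and is essentially the paper's: the map $\{h_j\}\mapsto \partial h_j/\partial z_j$ is exactly the map $F\colon H^0(V,\mathcal O_V/\mathcal C_{\mathcal F})\to H^0(V,K_V\otimes[-Y])\cong H^0(V,\mathcal O_V)\cong\mathbb C$ that the paper writes as $f\mapsto df\wedge(dw/w)$. (One terminological slip: a section of $\mathcal C_{\mathcal F}$ is constant along leaves, i.e.\ annihilated by $\partial/\partial z_j$, not by ``$dw_j$-type derivatives''.)

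For $(i)$, however, your proposed route is the wrong one, and the obstacle you anticipate is a symptom of this. You plan to correct $z_j$ order by order in $w_j$, invoking $H^1(Y,\mathcal O_Y(N^{-n}))=0$ at each step and then worrying about convergence of the resulting formal series. But that is precisely the mechanism behind the tubular-neighborhood construction (Proposition~\ref{prop:hol_tub_nbhd_aru}), which genuinely requires Diophantine-type control and \emph{fails} in Case~I\!I\!I. Since Proposition~\ref{prop:H0_of_O/CF} must hold in all three cases, any proof that reduces to such a recursion with a convergence step is on the wrong track.

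The paper's argument avoids all of this by a one-line integration that uses $\eta$ chartwise rather than on overlaps. Write $\eta|_{V_j}=h_j(z_j,w_j)\,dz_j\wedge dw_j/w_j$ with $h_j$ holomorphic and nowhere zero; after normalizing so that $h_j|_{U_j}\equiv 1$, set
\[
\widehat z_j(z_j,w_j):=z_j^0+\int_{z_j^0}^{z_j}h_j(\zeta,w_j)\,d\zeta .
\]
This is manifestly holomorphic (no formal series, no convergence issue) and satisfies $d\widehat z_j\wedge dw_j/w_j=\eta|_{V_j}$. Since $\eta$ is global and $dw_j/w_j=dw_k/w_k$, one gets $d(\widehat z_j-\widehat z_k)\wedge dw_j/w_j=0$ on $V_{jk}$, hence $\partial(\widehat z_j-\widehat z_k)/\partial z_j=0$, i.e.\ $\widehat z_j-\widehat z_k\in\Gamma(V_{jk},\mathcal C_{\mathcal F})$. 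The point is that $\eta$ decouples the problem: the normalization is imposed on each $V_j$ separately, and the globalness of $\eta$ delivers the overlap compatibility for free. Lemma~\ref{lem:coh_of_ellipt_curve_OCF} plays no role here.
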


\begin{proof}
Let $h_j(z_j, w_j)$ be the holomorphic function on $V_j$ such that 
\[
\eta|_{V_j} = h_j\cdot \frac{dz_j\wedge dw_j}{w_j}
\]
holds. As $dw_j/w_j=dw_k/w_k$ holds on each $V_{jk}$ and $dz_j=dz_k$ holds on each $U_{jk}$, one has that $\{(U_j, h_j(z_j, 0))\}$ patches to define a global holomorphic function on $Y$. 
Therefore there exists a constant $c\in \mathbb{C}$ such that $h_j(z_j, 0)\equiv c$ holds on each $U_j$. As $\eta$ has a pole of order $1$ along $Y$, $c\not=0$. 
Thus we may assume that $c=1$ by replacing $\eta$ with $\eta/c$ if necessary. 

Fixing a point $z_j^0\in U_j$ for each $j$, let us consider the function $\widehat{z}_j$ on $V_j$ defined by 
\[
\widehat{z}_j(z_j, w_j) := z_j^0 + \int_{z_j^0}^{z_j}h_j(\zeta, w_j)\,d\zeta. 
\]
Then $\widehat{z}_j$ is a holomorphic function which satisfies $\widehat{z}_j|_{U_j}=z_j$. 
By construction, 
\[
d\widehat{z}_j \wedge \frac{dw_j}{w_j} = \eta|_{V_j}
\]
holds on each $V_j$. 
Thus we have that 
\[
d(\widehat{z}_j-\widehat{z}_k) \wedge \frac{dw_j}{w_j} = \eta_j-\eta_k \equiv 0
\]
holds on each $V_{jk}$ again by the equation $dw_j/w_j=dw_k/w_k$. 
Therefore it follows that
\[
\frac{\del}{\del z_j} (\widehat{z}_j-\widehat{z}_k) \equiv 0
\]
holds on each $V_{jk}$, from which the assertion $(i)$ holds by replacing $z_j$ with $\widehat{z}_j$. 

For proving the assertion $(ii)$, let us consider the morphism $\mathcal{O}_V\to \mathcal{O}_V(K_V\otimes [-Y])$ of sheaves defined by 
\[
\Gamma(W, \mathcal{O}_V)\ni f\mapsto df\wedge \zeta|_W \in\Gamma(W, \mathcal{O}_V(K_V\otimes [-Y]))
\]
on each open subset $W\subset V$, where $\zeta:=\{(V_j, dw_j/w_j)\}$. 
By the exactness of 
$0\to \mathcal{C}_{\mathcal{F}}\to \mathcal{O}_V\to \mathcal{O}_V(K_V\otimes [-Y])$, 
the morphism above induces a morphism $\mathcal{O}_V/\mathcal{C}_{\mathcal{F}}\to \mathcal{O}_V(K_V\otimes [-Y])$, from which we obtain a linear map 
\[
F\colon H^0(V, \mathcal{O}_V/\mathcal{C}_{\mathcal{F}})\to H^0(V, \mathcal{O}_V(K_V\otimes [-Y])). 
\]
As 
\[
F(\{(V_j, \widehat{z}_j\ {\rm mod}\ \mathcal{C}_{\mathcal{F}})\}) = \eta
\]
holds by construction, it follows that $\{(V_j, \widehat{z}_j\ {\rm mod}\ \mathcal{C}_{\mathcal{F}})\}$ is a non trivial element of $H^0(V, \mathcal{O}_V/\mathcal{C}_{\mathcal{F}})$ and that $F$ is not the zero map. 
Moreover, as $H^0(V, \mathcal{O}_V(K_V\otimes [-Y]))=\mathbb{C}\cdot \eta$ follows from Lemma \ref{lem:nononconstantholfunctionexistsonV}, $F$ is a surjective linear map onto a $1$-dimensional space. 
Thus the assertion holds, since it can be easily checked that $F$ is injective. 
\end{proof}

Similarly we have the following for $H^0(V^*, \mathcal{O}_{V^*}/\mathcal{C}_{\mathcal{F}}^*)$.
\begin{proposition}\label{prop:H0_of_O/CF_*version}
For $z_j$'s as in 
Proposition \ref{prop:H0_of_O/CF}, 
$H^0(V^*, \mathcal{O}_{V^*}/\mathcal{C}_{\mathcal{F}}^*)$ is a vector space of dimension $1$ which is generated by $\{(V_j^*, z_j\ {\rm mod}\ \mathcal{C}_{\mathcal{F}})\}$. 
\end{proposition}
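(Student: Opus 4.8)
**Proof plan for Proposition \ref{prop:H0_of_O/CF_*version}.**

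The plan is to mimic the proof of Proposition \ref{prop:H0_of_O/CF} as closely as possible, replacing $V$ by $V^*$ throughout. The key structural input there was the exact sequence $0\to\mathcal{C}_{\mathcal{F}}\to\mathcal{O}_V\to\mathcal{O}_V(K_V\otimes[-Y])$ together with the computation $H^0(V,\mathcal{O}_V(K_V\otimes[-Y]))=\mathbb{C}\cdot\eta$, which in turn rested on Lemma \ref{lem:nononconstantholfunctionexistsonV}. First I would note that the analogous map
\[
F^*\colon H^0(V^*,\mathcal{O}_{V^*}/\mathcal{C}_{\mathcal{F}}^*)\to H^0(V^*,\mathcal{O}_{V^*}(K_{V^*}\otimes[-Y]))=H^0(V^*,\mathcal{O}_{V^*}(K_{V^*}))
\]
is well-defined, sending $f\mapsto df\wedge\zeta$ with $\zeta=\{(V_j^*,dw_j/w_j)\}$; note that $[-Y]|_{V^*}$ is holomorphically trivial since $Y\cap V^*=\emptyset$, so the target is $H^0(V^*,\mathcal{O}_{V^*}(K_{V^*}))$, i.e. holomorphic $2$-forms on $V^*$. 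By construction $F^*(\{(V_j^*,z_j\ {\rm mod}\ \mathcal{C}_{\mathcal{F}})\})=\eta|_{V^*}$, which is a nonzero holomorphic $2$-form on $V^*$ (its pole divisor $-Y$ does not meet $V^*$), so $\{(V_j^*,z_j\ {\rm mod}\ \mathcal{C}_{\mathcal{F}})\}$ is a nonzero class and $F^*$ is nonzero.

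Next I would establish surjectivity and injectivity of $F^*$. Injectivity is as in Proposition \ref{prop:H0_of_O/CF}: if $df\wedge\zeta\equiv0$ on $V^*$ then $f$ is constant along leaves of $\mathcal{F}$, i.e. $f\in H^0(V^*,\mathcal{C}_{\mathcal{F}}^*)$, so $f\ {\rm mod}\ \mathcal{C}_{\mathcal{F}}^*=0$. For surjectivity it suffices to show $H^0(V^*,\mathcal{O}_{V^*}(K_{V^*}))$ is one-dimensional, spanned by $\eta|_{V^*}$. Here is where the argument genuinely differs from the compact-curve-neighborhood case: on $V^*$ one cannot directly invoke Lemma \ref{lem:nononconstantholfunctionexistsonV}, since that concerns functions, not $2$-forms. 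Given any holomorphic $2$-form $\omega$ on $V^*$, the quotient $\omega/\eta$ is a meromorphic function on $V$, holomorphic on $V^*$, with possible poles only along $Y$ of order bounded by... well, $\eta$ has a simple pole along $Y$, so a priori $\omega/\eta$ could have a simple pole along $Y$. The point to argue is that $\omega/\eta$ is in fact holomorphic on all of $V$: the existence of the family $\{H_t\}$ of compact Levi-flat hypersurfaces with dense leaves forces a bounded holomorphic function on $V^*$ that is meromorphic across $Y$ to extend holomorphically (by Riemann-type extension together with the maximum principle along leaves, exactly as in the proof of Lemma \ref{lem:nononconstantholfunctionexistsonV} and \cite[Lemma 2.2]{KU}). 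Once $\omega/\eta\in H^0(V,\mathcal{O}_V)\cong\mathbb{C}$ by Lemma \ref{lem:nononconstantholfunctionexistsonV}, we get $\omega\in\mathbb{C}\cdot\eta$, hence $F^*$ is onto a one-dimensional space.

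The main obstacle I anticipate is precisely the extension step: verifying carefully that $\omega/\eta$, a priori only meromorphic along $Y$ with at worst a simple pole, is actually holomorphic across $Y$. The cleanest route is to argue locally on each $V_j$: writing $\omega|_{V_j}=a_j(z_j,w_j)\,dz_j\wedge dw_j$ with $a_j$ holomorphic and $\eta|_{V_j}=h_j(z_j,w_j)\,dz_j\wedge dw_j/w_j$ with $h_j$ holomorphic and $h_j(z_j,0)\equiv1$ (as in Proposition \ref{prop:H0_of_O/CF}), one sees $\omega/\eta=w_j\cdot a_j/h_j$ near $Y$, which is manifestly holomorphic and vanishes along $Y$; so in fact $\omega/\eta$ extends holomorphically and the bound on the pole order is automatic, no Levi-flat argument is even needed for this part. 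The role of Lemma \ref{lem:nononconstantholfunctionexistsonV} is only then to conclude that this holomorphic function on $V$ is constant. With that in hand, $F^*$ is an isomorphism onto $\mathbb{C}\cdot(\eta|_{V^*})$, and pulling back the generator shows $H^0(V^*,\mathcal{O}_{V^*}/\mathcal{C}_{\mathcal{F}}^*)=\mathbb{C}\cdot\{(V_j^*,z_j\ {\rm mod}\ \mathcal{C}_{\mathcal{F}})\}$, completing the proof.
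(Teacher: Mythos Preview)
Your approach is exactly the paper's: it simply says ``the same argument as in the proof of Proposition \ref{prop:H0_of_O/CF} $(ii)$,'' and you have written out what that means, constructing $F^*$ via $f\mapsto df\wedge\zeta$ and showing it is an isomorphism onto a one-dimensional space.

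There is, however, a small slip in your treatment of the target. When you write $\omega|_{V_j}=a_j\,dz_j\wedge dw_j$ with $a_j$ ``holomorphic,'' the function $a_j$ is only holomorphic on $V_j^*$, not on $V_j$; a priori it could have a pole or essential singularity along $w_j=0$, so the conclusion that $w_j\cdot a_j/h_j$ ``is manifestly holomorphic and vanishes along $Y$'' is not justified. Consequently the extension of $\omega/\eta$ to $V$ does not follow from this local computation. The good news is that the whole extension step is unnecessary: since $\eta|_{V^*}$ is a nowhere-vanishing holomorphic $2$-form (its divisor $-Y$ misses $V^*$), the quotient $\omega/\eta$ is already a holomorphic function on $V^*$, and Lemma \ref{lem:nononconstantholfunctionexistsonV} applies directly on $V^*$ to give $\omega/\eta\in H^0(V^*,\mathcal{O}_{V^*})\cong\mathbb{C}$. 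This both repairs and simplifies your argument.
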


\begin{proof}
The proposition is shown by the same argument as in the proof of 
Proposition \ref{prop:H0_of_O/CF} $(ii)$. 
\end{proof}

In \S 4 and 5, we always assume that $z_j$'s are as in Proposition \ref{prop:H0_of_O/CF}. 

\section{Cohomologies of the sheaves of holomorphic functions constant along $\mathcal{F}$}

We use the notation in \S \ref{section:kigou_settei}. 
In this section, we investigate the groups $\check{H}^1(\{V_j\}, \mathcal{C}_{\mathcal{F}})$ and $\check{H}^1(\{V_j^*\}, \mathcal{C}_{\mathcal{F}}^*)$, and the map between them. 
Here our arguments are mainly based on Proposition \ref{prop:ueda_lemma} and the fact that $\check{H}^1(\{U_j\}, \mathbb{C}(N_{Y/X}^n))=0$ holds for any non-zero integer $n$, since we are assuming that $N_{Y/X}$ is non-torsion (Recall Lemma \ref{lem:coh_of_ellipt_curve_OCF}). 

\subsection{Preliminary observation on formal primitives}

First let us show the following: 
\begin{lemma}\label{lem:formal_solution_of_g}
Take an element 
$\{(V_{jk}, g_{jk})\}\in \check{Z}^1(\{V_j\}, \mathcal{C}_{\mathcal{F}})$. 
Let 
\[
g_{jk}(w_j) = \sum_{n=0}^\infty g_{jk, n}\cdot w_j^n
\]
be the Taylor expansion of $g_{jk}$. Assume that $g_{jk, 0}=0$ for each $j$ and $k$. 
Then there uniquely exists $f_{j, n}\in\mathbb{C}$ for each $j$ and $n\in\mathbb{Z}_{>0}$ such that the formal power series 
\[
f_j(w_j) := \sum_{n=1}^\infty f_{j, n}\cdot w_j^n
\]
satisfies $-f_j+f_k=g_{jk}$ formally. Such $f_{j, n}$'s satisfy 
\[
\max_j|f_{j, n}|\leq K_{\rm const}(N_{Y/X}^{-n})\cdot \max_{j, k}|g_{jk, n}|. 
\]
Moreover, the following conditions are equivalent to each other: \\ 
$(i)$ For any $j$, $f_j$ is a convergent power series whose radius of convergence is larger than or equal to $1$. \\
$(ii)$ $[\{(V_{jk}, g_{jk})\}]=0\in \check{H}^1(\{V_j\}, \mathcal{C}_{\mathcal{F}})$. 
\end{lemma}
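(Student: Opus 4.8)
The plan is to work degree by degree in the $w_j$-expansion, exactly as the statement of the lemma suggests. Writing $g_{jk} = \sum_{n\geq 1} g_{jk,n} w_j^n$ (note $g_{jk,0}=0$ by hypothesis), and using that $w_j = t_{jk} w_k$ with $t_{jk}\in\mathrm{U}(1)$, I would first observe that for each fixed $n$ the coefficients $\{(U_{jk}, g_{jk,n})\}$ form a \v{C}ech $1$-cocycle for the sheaf $\mathbb{C}(N_{Y/X}^{-n})$: the cocycle condition for $\{g_{jk}\}$ in $\mathcal{C}_{\mathcal{F}}$ breaks up coefficient-by-coefficient because $\mathcal{C}_{\mathcal{F}}$-sections are functions of $w_j$ alone, and passing from $w_j$ to $w_k$ multiplies the $n$-th coefficient by $t_{jk}^{-n}$, which is precisely the transition function of $N_{Y/X}^{-n}$ as an element of $\check{H}^1(\{U_j\},\mathrm{U}(1))$. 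Since $N_{Y/X}$ is non-torsion, $N_{Y/X}^{-n}$ is a non-trivial unitary flat line bundle for every $n\geq 1$, so by Lemma \ref{lem:coh_of_ellipt_curve_OCF} the cocycle $\{(U_{jk}, g_{jk,n})\}$ is a coboundary, and by Lemma \ref{lem:ueda_type_estim_K_Kconst_explicitly}$(ii)$ (in the form noted after it, allowing $\check Z^1$ in place of $\check B^1$) there is a unique $\{(U_j, f_{j,n})\}\in\check{C}^0(\{U_j\},\mathbb{C}(N_{Y/X}^{-n}))$ with $-f_{j,n} + t_{jk}^{-n} f_{k,n} = g_{jk,n}$ and with the estimate $\max_j|f_{j,n}|\leq K_{\mathrm{const}}(N_{Y/X}^{-n})\cdot\max_{j,k}|g_{jk,n}|$. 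Setting $f_j(w_j):=\sum_{n\geq 1} f_{j,n} w_j^n$ as a formal power series then gives $-f_j+f_k = g_{jk}$ formally, and uniqueness of the $f_{j,n}$ degree-by-degree (from $H^0(Y,\mathbb{C}(N_{Y/X}^{-n}))=0$) gives uniqueness of the formal solution; this establishes everything in the statement up to the equivalence of $(i)$ and $(ii)$.

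For the equivalence, the implication $(i)\Rightarrow(ii)$ is the easy direction: if each $f_j$ converges with radius $\geq 1$, then $f_j$ is a genuine holomorphic function on $V_j$, constant along the leaves of $\mathcal{F}$ (being a function of $w_j$ alone), hence $\{(V_j,f_j)\}\in\check{C}^0(\{V_j\},\mathcal{C}_{\mathcal{F}})$, and $\delta(\{f_j\}) = \{g_{jk}\}$ exhibits the class as zero. For $(ii)\Rightarrow(i)$, suppose $[\{(V_{jk},g_{jk})\}]=0$, so there exists some $\{(V_j,\tilde f_j)\}\in\check{C}^0(\{V_j\},\mathcal{C}_{\mathcal{F}})$ with $-\tilde f_j + \tilde f_k = g_{jk}$; each $\tilde f_j$ is a holomorphic function of $w_j$ on $|w_j|<1$, so it has a convergent Taylor expansion $\tilde f_j = \sum_{n\geq 0}\tilde f_{j,n}w_j^n$ with radius $\geq 1$. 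Comparing coefficients of $w_j^n$ for $n\geq 1$ shows that $\{(U_j,\tilde f_{j,n})\}$ solves the same $\delta$-equation as $\{(U_j,f_{j,n})\}$ over $\mathbb{C}(N_{Y/X}^{-n})$, and uniqueness (again $H^0(Y,\mathbb{C}(N_{Y/X}^{-n}))=0$) forces $\tilde f_{j,n}=f_{j,n}$ for all $n\geq 1$; thus $f_j = \tilde f_j - \tilde f_{j,0}$ differs from $\tilde f_j$ by the constant $\tilde f_{j,0}$ and in particular is a convergent power series of radius $\geq 1$, giving $(i)$.

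The one subtlety I would be careful about is the matching of the formal solution produced here with an arbitrary \v{C}ech $0$-cochain witnessing the vanishing of the class: a priori the witness $\{\tilde f_j\}$ need not have $\tilde f_{j,0}$ independent of $j$, so $\{\tilde f_j\}$ itself is not literally $\{f_j\}$ but rather $\{f_j\}$ plus a global section of $\mathcal{C}_{\mathcal{F}}$ adjusting the constant terms; this is harmless since the constant terms never enter the equations for $n\geq 1$, but it is the point where one must invoke $H^0(Y,\mathbb{C}(N_{Y/X}^{-n}))=0$ carefully, separately for each $n\geq 1$, to conclude $\tilde f_{j,n}=f_{j,n}$. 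I expect this degree-by-degree uniqueness bookkeeping — rather than any analytic estimate — to be the only place requiring real care; the convergence radius claim in $(i)$ comes for free from the fact that holomorphic functions on $\{|w_j|<1\}$ have Taylor series with radius of convergence at least $1$.
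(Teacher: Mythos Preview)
Your proof is correct and follows essentially the same approach as the paper's own proof: degree-by-degree reduction to the vanishing of $\check{H}^1(\{U_j\},\mathbb{C}(N_{Y/X}^{-n}))$ via Lemmata~\ref{lem:ueda_type_estim_K_Kconst_explicitly} and~\ref{lem:coh_of_ellipt_curve_OCF}, then uniqueness of the coefficient-wise solution to identify any actual $0$-cochain witness with the formal one. You are in fact slightly more explicit than the paper about the harmless discrepancy in the constant terms $\tilde f_{j,0}$ when proving $(ii)\Rightarrow(i)$.
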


\begin{proof}
As is simply observed by calculating the Taylor expansion of $g_{jk}+g_{k\ell}+g_{\ell j}$ on each $V_j\cap V_k\cap V_\ell$, it holds that $\{(U_{jk}, g_{jk, n})\}\in \check{Z}^1(\{U_j\}, \mathbb{C}(N_{Y/X}^{-n}))$ for each $n\in \mathbb{Z}_{>0}$. 
Therefore it follows from Lemmata \ref{lem:ueda_type_estim_K_Kconst_explicitly} and  \ref{lem:coh_of_ellipt_curve_OCF} that there uniquely exists $f_{j, n}$'s with the estimate as in the assertion such that $\delta(\{(U_j, f_{j, n})\})=\{(U_{jk}, g_{jk, n})\}\in \check{Z}^1(\{U_j\}, \mathbb{C}(N_{Y/X}^{-n}))$ holds. 
The equation $-f_j+f_k=g_{jk}$ can be easily checked. 
For proving the uniqueness, take a formal power series 
\[
h_{j}(w_j) = \sum_{n=0}^\infty h_{j, n}\cdot w_j^n
\]
such that $-h_j+h_k=g_{jk}$ holds formally. 
Then, again by comparing the coefficients, it follows that $\delta(\{(U_j, h_{j, n})\})=\{(U_{jk}, g_{jk, n})\}\in \check{Z}^1(\{U_j\}, \mathbb{C}(N_{Y/X}^{-n}))$ holds. 
Therefore $f_{j, n}=h_{j, n}$ holds for each $j$ and $n$ by Lemma \ref{lem:ueda_type_estim_K_Kconst_explicitly}. 
See also Remark \ref{rmk:keisikikai_C_F_or_holtubnbhd} below for these arguments. 

For the latter half of the assertion, it is clear that $(i)$ implies $(ii)$. 
Assume that $(ii)$ holds. 
Then there exists a $0$-cochain $\{(V_j, h_j)\}\in\check{C}^0(\{V_j\}, \mathcal{C}_{\mathcal{F}})$ such that $\delta(\{(V_j, h_j)\})=\{(V_{jk}, g_{jk})\}$. 
Let 
\[
h_{j}(w_j) = \sum_{n=0}^\infty h_{j, n}\cdot w_j^n
\]
be the Taylor expansion of $h_{j}$. Note that each $h_j$ is a convergent power series whose radius of convergence is larger than or equal to $1$, since it is a section of $\mathcal{C}_{\mathcal{F}}$ on $V_j$. 
By the uniqueness of the formal solution, 
it follows that $f_{j, n}=h_{j, n}$ holds for each $j$ and $n$, from which the assertion follows. 
\end{proof}

\begin{remark}\label{rmk:keisikikai_C_F_or_holtubnbhd}
For the argument in the proof of Lemma \ref{lem:formal_solution_of_g}, 
note that the same type arguments makes sense also for the $1$-cocycles of the sections of $\mathcal{O}_V$ if $V$ is a holomorphic tubular neighborhood (See the proof of \cite[Lemma 3.1]{KU2} for example). 
For running this type of arguments, 
it is essential that the partial derivatives of $g_{jk}$'s enjoy the equation such as $\frac{\del^n}{\del w_k^n}g_{jk} = t_{jk}^n\cdot \frac{\del^n}{\del w_j^n}g_{jk}$. 
Such an equation never holds in general since
\[
\frac{\del}{\del w_k} = \frac{\del w_j}{\del w_k}\cdot \frac{\del}{\del w_j} + \frac{\del z_j}{\del w_k}\cdot \frac{\del}{\del z_j}
\]
and $\del z_j/\del w_k$ may be a non-trivial function, which is the reason why the assumption that $V$ is a holomorphic tubular neighborhood is needed (Note that, as is explained  in \S \ref{section:kigou_settei}, one can use $z_j$'s such that $z_j=A_{jk}+z_k$ holds on each $V_{jk}$ under the assumption that $V$ admits a holomorphic tubular neighborhood, which assures $\del z_j/\del w_k\equiv 0$). In our situation in Lemma \ref{lem:formal_solution_of_g}, $V$ does {\it not} admits a holomorphic tubular neighborhood in general. Instead, we are assuming that $\{(V_{jk}, g_{jk})\}\in \check{Z}^1(\{V_j\}, \mathcal{C}_{\mathcal{F}})$, by which we can recover the equation $\frac{\del^n}{\del w_k^n}g_{jk} = t_{jk}^n\cdot \frac{\del^n}{\del w_j^n}g_{jk}$. 
\end{remark}

Similarly we have the following: 
\begin{lemma}\label{lem:formal_solution_of_g_*version}
Take an element $\{(V_{jk}^*, g_{jk})\}\in \check{Z}^1(\{V_j^*\}, \mathcal{C}_{\mathcal{F}}^*)$. Let
\[
g_{jk}(w_j) = \sum_{n=-\infty}^\infty g_{jk, n}\cdot w_j^n
\]
bet the Laurent expansion of $g_{jk}$. 
Then there uniquely exists $f_{j, -n}\in\mathbb{C}$ for each $j$ and $n\in\mathbb{Z}_{>0}$ such that the formal power series 
\[
f_j(w_j) := \sum_{n=-\infty}^{-1} f_{j, n}\cdot w_j^{n}
\]
satisfies that the principal part of the series $-f_j+f_k-g_{jk}$ is zero. Such $f_{j, -n}$'s satisfy 
\[
\max_j|f_{j, -n}|\leq K_{\rm const}(N_{Y/X}^{n})\cdot \max_{j, k}|g_{jk, -n}|. 
\]
Moreover, the following conditions are equivalent to each other: \\
$(i)$ For any $j$, the power series $\sum_{n=1}^{\infty} f_{j, -n}\cdot X^n$ is convergent and its radius of convergence is $\infty$. \\
$(ii)$ $[\{(V_{jk}^*, g_{jk})\}]\in {\rm Image}(\check{H}^1(\{V_j\}, \mathcal{C}_{\mathcal{F}})\to \check{H}^1(\{V_j^*\}, \mathcal{C}_{\mathcal{F}}^*))$. 
\end{lemma}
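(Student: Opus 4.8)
**Plan for the proof of Lemma~\ref{lem:formal_solution_of_g_*version}.**

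The plan is to mirror the proof of Lemma~\ref{lem:formal_solution_of_g}, but working with the negative-degree part of the Laurent expansion instead of the positive-degree part. First I would observe that, by comparing the principal parts of $g_{jk}+g_{k\ell}+g_{\ell j}$ on each triple intersection $V_j^*\cap V_k^*\cap V_\ell^*$, the cocycle condition $\{(V_{jk}^*, g_{jk})\}\in\check{Z}^1(\{V_j^*\},\mathcal{C}_{\mathcal{F}}^*)$ forces $\{(U_{jk}, g_{jk,-n})\}\in\check{Z}^1(\{U_j\}, \mathbb{C}(N_{Y/X}^{n}))$ for each $n\in\mathbb{Z}_{>0}$; the key point, as in Remark~\ref{rmk:keisikikai_C_F_or_holtubnbhd}, is that the identity $\frac{\del^n}{\del w_k^n}g_{jk}=t_{jk}^n\cdot\frac{\del^n}{\del w_j^n}g_{jk}$ is available here precisely because $g_{jk}$ is a section of $\mathcal{C}_{\mathcal{F}}^*$ (so it depends only on $w_j$, and $dw_j/w_j=dw_k/w_k$ gives $w_k=t_{jk}^{-1}w_j$). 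Since $N_{Y/X}$ is non-torsion, $N_{Y/X}^{n}$ is a non-trivial unitary flat line bundle, so Lemma~\ref{lem:coh_of_ellipt_curve_OCF} gives $\check{H}^1(\{U_j\},\mathbb{C}(N_{Y/X}^n))=0$, and Lemma~\ref{lem:ueda_type_estim_K_Kconst_explicitly}$(ii)$ produces a unique $\{(U_j, f_{j,-n})\}\in\check{C}^0(\{U_j\},\mathbb{C}(N_{Y/X}^n))$ with $\delta(\{(U_j,f_{j,-n})\})=\{(U_{jk},g_{jk,-n})\}$ and the stated bound $\max_j|f_{j,-n}|\leq K_{\rm const}(N_{Y/X}^n)\cdot\max_{j,k}|g_{jk,-n}|$.

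Next I would assemble $f_j(w_j):=\sum_{n=1}^{\infty}f_{j,-n}w_j^{-n}$ and check that $-f_j+f_k-g_{jk}$ has vanishing principal part: degree by degree this is exactly the relation $\delta(\{f_{j,-n}\})=\{g_{jk,-n}\}$ together with the transformation law $w_k^{-n}=t_{jk}^{n}w_j^{-n}$ matching the flat transition $s_{jk}=t_{jk}^{-n}$ for $N_{Y/X}^n$ (one has to be a little careful that $\mathbb{C}(N_{Y/X}^n)$ is identified so that the monodromy of $w_j^{-n}$ is $t_{jk}^n$; this is a sign/convention check). Uniqueness of the $f_{j,-n}$ follows from the uniqueness clause in Lemma~\ref{lem:ueda_type_estim_K_Kconst_explicitly}$(ii)$: any other formal solution, expanded, satisfies the same $\delta$-equation in each degree, hence agrees coefficientwise, just as in Lemma~\ref{lem:formal_solution_of_g}.

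For the equivalence of $(i)$ and $(ii)$: the implication $(i)\Rightarrow(ii)$ is immediate, since if each $\sum f_{j,-n}X^n$ has infinite radius of convergence then $f_j$ is holomorphic on $V_j$ (it extends over $Y$, giving a genuine section of $\mathcal{C}_{\mathcal{F}}$ on $V_j$, not merely $\mathcal{C}_{\mathcal{F}}^*$), the difference $(-f_j+f_k)-g_{jk}$ is a holomorphic function on $V_{jk}^*$ with no principal part, hence extends holomorphically across $Y$, and adding these to the $f_j$ realizes $\{(V_{jk}^*,g_{jk})\}$ as the image of a class in $\check{H}^1(\{V_j\},\mathcal{C}_{\mathcal{F}})$. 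For $(ii)\Rightarrow(i)$, suppose $\{(V_{jk}^*,g_{jk})\}$ equals $\delta(\{(V_j,h_j)\})$ up to something in the image from $\check{C}^0(\{V_j\},\mathcal{C}_{\mathcal{F}})$; more precisely, writing the hypothesis out, there is a $0$-cochain $\{(V_j^*,h_j)\}$ with each $h_j$ a section of $\mathcal{C}_{\mathcal{F}}^*$ on $V_j^*$ whose class extends, and after subtracting a genuine $\mathcal{C}_{\mathcal{F}}$-cochain one may assume $h_j$ has only negative-degree Laurent terms and $\delta(\{h_j\})=\{g_{jk}\}$; since each $h_j$, being holomorphic on the annulus $V_j^*$, has a Laurent expansion whose negative part $\sum h_{j,-n}w_j^{-n}$ converges for all $w_j\neq 0$, the associated power series $\sum h_{j,-n}X^n$ has infinite radius of convergence, and by the uniqueness of the formal solution $f_{j,-n}=h_{j,-n}$ for all $j,n$, giving $(i)$. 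The main obstacle I anticipate is bookkeeping: correctly matching the monodromy conventions (which power $t_{jk}^{\pm n}$ goes with $w_j^{-n}$, hence whether one works with $N_{Y/X}^n$ or its dual in Lemma~\ref{lem:ueda_type_estim_K_Kconst_explicitly}$(ii)$) and being precise about what "the image of $\check{H}^1(\{V_j\},\mathcal{C}_{\mathcal{F}})$" unwinds to at the cochain level, so that the reduction to a $0$-cochain with purely negative Laurent tail is legitimate.
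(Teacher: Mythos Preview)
Your approach is correct and is exactly what the paper does (the paper's proof is the single line ``the assertion can be shown by the same argument as in the proof of Lemma~\ref{lem:formal_solution_of_g}''). One small slip in your $(i)\Rightarrow(ii)$ paragraph: if $\sum_{n\geq 1} f_{j,-n}X^n$ has infinite radius of convergence then $f_j(w_j)=\sum_{n\geq 1} f_{j,-n}w_j^{-n}$ is holomorphic on $V_j^*$, not on $V_j$ (it is a pure principal part, so it certainly does \emph{not} extend across $Y$); what extends across $Y$ is the difference $g_{jk}-(-f_j+f_k)$, which has no principal part and therefore defines an element of $\check{Z}^1(\{V_j\},\mathcal{C}_{\mathcal{F}})$ whose restriction to $V^*$ is cohomologous to $\{g_{jk}\}$. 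Your subsequent sentence already contains this correct argument, so just delete the parenthetical claim that $f_j$ itself extends.
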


\begin{proof}
The assertion can be shown by the same argument as in the proof of Lemma \ref{lem:formal_solution_of_g}. 
\end{proof}

In the rest of this subsection, we show the following proposition, which is important for investigating Case I and I\!I from the viewpoint of Remark \ref{rmk:keisikikai_C_F_or_holtubnbhd}
\begin{proposition}\label{prop:hol_tub_nbhd_aru}
Assume that the map $\varinjlim_{W}\check{H}^1(\{W\cap V_j\}, \mathcal{C}_{\mathcal{F}})\to \check{H}^1(\{U_j\}, \mathbb{C})$ induced from the restriction maps is a linear bijection, where the direct limit is taken over the set of all the open neighborhoods $W$ of $Y$ in $V$. 
Then there exist an open neighborhood $W$ of $Y$ in $V$ 
and a holomorphic function $\widehat{z}_j\colon W\cap V_j\to \mathbb{C}$ with $\widehat{z}_j|_{U_j}=z_j$ for each $j$ such that $\widehat{z}_j-\widehat{z}_k\equiv A_{jk}$ holds on each $V_{jk}$. 
Especially, in Case I and I\!I, we may assume that $V$ is a holomorphic tubular neighborhood of $Y$ and 
$z_j-z_k\equiv A_{jk}$ on each $V_{jk}$ by shrinking $V$, changing the scaling of $w_j$'s, and by replacing $z_j$ with the function $\widehat{z}_j$ as above. 
\end{proposition}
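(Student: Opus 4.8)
The plan is to construct the desired functions $\widehat{z}_j$ by correcting the given extensions $z_j$ with sections of $\mathcal{C}_{\mathcal{F}}$, using the hypothesis on the direct limit to guarantee that the required correction cochain has coefficients defined on a genuine (shrunken) neighborhood of $Y$. First I would record, as in Proposition \ref{prop:H0_of_O/CF}$(i)$, that the $1$-cochain $\{(V_{jk},\, -z_j+z_k-A_{jk})\}$ is a $\mathcal{C}_{\mathcal{F}}$-valued $1$-cochain: indeed $z_j-z_k-A_{jk}$ vanishes on $U_{jk}$ and $\del_{z_j}(z_j-z_k)\equiv 0$ holds on $V_{jk}$ by the computation in the proof of Proposition \ref{prop:H0_of_O/CF}, so this cochain takes values in $\mathcal{C}_{\mathcal{F}}$. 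One checks directly (cocycle relation for $\{z_j\}$ and for the constants $A_{jk}$) that it is in fact a $1$-cocycle, hence it defines a class $c\in\check{H}^1(\{V_j\},\mathcal{C}_{\mathcal{F}})$, and a fortiori a class in $\varinjlim_W\check{H}^1(\{W\cap V_j\},\mathcal{C}_{\mathcal{F}})$.

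Next I would compute the image of $c$ under the map $\varinjlim_W\check{H}^1(\{W\cap V_j\},\mathcal{C}_{\mathcal{F}})\to\check{H}^1(\{U_j\},\mathbb{C})$ induced by restriction to $Y$. Since $z_j-z_k-A_{jk}$ vanishes identically on $U_{jk}$, the restricted cocycle is zero, so the image of $c$ is $0$. By the assumed injectivity of this map, $c=0$ already in $\varinjlim_W\check{H}^1(\{W\cap V_j\},\mathcal{C}_{\mathcal{F}})$; that is, there is an open neighborhood $W$ of $Y$ and a $0$-cochain $\{(W\cap V_j,\, \varphi_j)\}\in\check{C}^0(\{W\cap V_j\},\mathcal{C}_{\mathcal{F}})$ with $-\varphi_j+\varphi_k = -z_j+z_k-A_{jk}$ on each $W\cap V_{jk}$. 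Setting $\widehat{z}_j := z_j-\varphi_j$ on $W\cap V_j$ then gives $\widehat{z}_j-\widehat{z}_k = (z_j-z_k) - (\varphi_j-\varphi_k) = A_{jk}$ on $W\cap V_{jk}$, and since $\varphi_j$ is a section of $\mathcal{C}_{\mathcal{F}}$ hence constant along leaves — in particular its restriction to $U_j$ is a constant, which we may normalize to $0$ by subtracting $\varphi_j|_{U_j}$ from each $\varphi_j$ (this does not affect the coboundary because the subtracted constants form a cocycle that restricts to the same thing; more simply, $\varphi_j-\varphi_k$ is already pinned down, so we may choose the free additive constants so that $\varphi_j|_{U_j}=0$) — we get $\widehat{z}_j|_{U_j}=z_j$. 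For the final sentence of the statement, I would invoke the hypothesis verification in Case I and II: the relevant direct-limit bijection will have been established there (or follows from the results quoted in \S\ref{section:ueda_lemma_const}, using $\check H^1(\{U_j\},\mathbb{C}(N_{Y/X}^{-n}))=0$ for $n\neq 0$ via Lemma \ref{lem:coh_of_ellipt_curve_OCF} together with the Ueda-type estimates of Proposition \ref{prop:ueda_lemma} to control convergence), so the construction applies and, after shrinking $V$ to $W$ and rescaling the $w_j$, the functions $z_j$ themselves may be taken to satisfy $z_j-z_k\equiv A_{jk}$ on $V_{jk}$; the existence of a holomorphic retraction $V\to Y$ sending leaves of $\mathcal{F}$ fiberwise, i.e. that $V$ is a holomorphic tubular neighborhood, is then immediate from having transverse coordinates $z_j$ that glue by translations.

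The main obstacle I anticipate is the passage from the formal/cohomological vanishing to an honest neighborhood: the class $c$ vanishing in the direct limit only gives a primitive $\{\varphi_j\}$ defined on \emph{some} shrunken $W$, and one must be careful that $\{W\cap V_j\}$ still has all the structural features assumed in \S\ref{section:kigou_settei} (in particular that the $U_j=W\cap V_j\cap Y$ are unchanged, so the restriction map to $\check H^1(\{U_j\},\mathbb{C})$ makes sense and the normalization $\varphi_j|_{U_j}=0$ is available). The second delicate point is purely bookkeeping: checking that the cochain $\{-z_j+z_k-A_{jk}\}$ is genuinely a $\mathcal{C}_{\mathcal{F}}$-valued \emph{cocycle} (not merely a cochain) and that replacing $z_j$ by $\widehat z_j$ is compatible with the earlier choice in Proposition \ref{prop:H0_of_O/CF} — i.e. that one can simultaneously arrange both $-z_j+z_k\in\Gamma(V_{jk},\mathcal{C}_{\mathcal{F}})$ and $z_j-z_k\equiv A_{jk}$, the latter being a strictly stronger normalization that is only available in Cases I and II. Everything else is the routine identity $\widehat z_j-\widehat z_k=A_{jk}$ and the standard identification of a foliation with translation-glued transversals with the total space of $N_{Y/V}$.
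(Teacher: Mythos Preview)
Your proposal is correct and follows essentially the same route as the paper. The only cosmetic difference is that the paper works with the cocycle $g_{jk}:=-z_j+z_k$ (whose restriction to $U_{jk}$ is $A_{jk}$) and compares it in the direct limit with the constant cocycle $\{A_{jk}\}$, whereas you subtract $A_{jk}$ at the outset; the normalization step is also phrased more cleanly in the paper by observing that $\{(U_j,\varphi_j|_{U_j})\}$ glues to a global holomorphic function on $Y$ and is therefore a single constant $c$, rather than arguing about ``free additive constants''. For the final sentence the paper simply invokes the companion lemma (your anticipated ``hypothesis verification in Case I and II''), exactly as you outline.
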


\begin{proof}
Set $g_{jk}:=-z_j+z_k$ on each $V_{jk}$. 
Then it follows from Proposition \ref{prop:H0_of_O/CF} $(i)$ that we may assume 
$\{(V_{jk}, g_{jk})\}\in \check{H}^1(\{V_j\}, \mathcal{C}_{\mathcal{F}})$. 
As $g_{jk}|_{U_{jk}}\equiv A_{jk}$, it follows from the assumption that 
there exist an open neighborhood $W$ of $Y$ in $V$ and $\{(W\cap V_j, f_j)\}\in\check{C}^0(\{W\cap V_j\}, \mathcal{C}_{\mathcal{F}})$ such that 
$-f_j+f_k=-A_{jk}+g_{jk}$ holds on each $W\cap V_{jk}$. 
Note that $(-f_j+f_k)|_{U_{jk}}\equiv 0$ by construction, which means that $\{(U_j, f_j)\}$ patches to define a holomorphic function on $Y$. Therefore there exists a constant $c\in \mathbb{C}$ such that $f_j|_{U_j}\equiv c$ holds for any $j$. 
Thus, the desired function can be obtained by letting $\widehat{z}_j:=z_j-f_j+c$. 
The latter half of the assertion follows from Lemma \ref{lem:directlim_IorII_forC_F} below. 
\end{proof}

\begin{lemma}\label{lem:directlim_IorII_forC_F}
In Case I and I\!I, the map $\varinjlim_{W}\check{H}^1(\{W\cap V_j\}, \mathcal{C}_{\mathcal{F}})\to \check{H}^1(\{U_j\}, \mathbb{C})$ as in Proposition \ref{prop:hol_tub_nbhd_aru} is a linear bijection. 
\end{lemma}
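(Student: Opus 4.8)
The plan is to compute the direct limit $\varinjlim_{W}\check{H}^1(\{W\cap V_j\}, \mathcal{C}_{\mathcal{F}})$ coefficient-by-coefficient via the $w_j$-expansion, using the Ueda-type estimates of Proposition \ref{prop:ueda_lemma} together with the growth hypotheses that define Case I and Case I\!I. First I would fix a representative $\{(V_{jk}, g_{jk})\}\in\check{Z}^1(\{V_j\},\mathcal{C}_{\mathcal{F}})$ and expand $g_{jk}(w_j)=\sum_{n\geq 0}g_{jk,n}w_j^n$; as in Lemma \ref{lem:formal_solution_of_g}, each $\{(U_{jk},g_{jk,n})\}$ is a $1$-cocycle for $\mathbb{C}(N_{Y/X}^{-n})$, which is cohomologically trivial for $n\geq 1$ by Lemma \ref{lem:coh_of_ellipt_curve_OCF}, so there is a unique $\{(U_j,f_{j,n})\}$ with $\delta(\{(U_j,f_{j,n})\})=\{(U_{jk},g_{jk,n})\}$ and $\max_j|f_{j,n}|\leq K_{\mathrm{const}}(N_{Y/X}^{-n})\cdot\max_{j,k}|g_{jk,n}|$. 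The formal primitive $f_j(w_j)=\sum_{n\geq 1}f_{j,n}w_j^n$ (with the degree-$0$ term governed separately, landing in $\check{H}^1(\{U_j\},\mathbb{C})$) is the candidate solution; I must show it actually converges on some shrunk neighborhood $W\cap V_j=\{|w_j|<r\}$ when we are in Case I or I\!I.

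The convergence estimate is the heart of the argument, and I expect it to be the main obstacle. By Proposition \ref{prop:ueda_lemma}, $K_{\mathrm{const}}(N_{Y/X}^{-n})\leq K_2/d(\mathbb{I}_Y,N_{Y/X}^{-n})=K_2/d(\mathbb{I}_Y,N_{Y/X}^{n})$ (the distance is invariant under $F\mapsto\overline{F}=F^{-1}$ on $\mathrm{Pic}^0$ of an elliptic curve), so in Case I, for any $R>1$ there is $M_R$ with $K_{\mathrm{const}}(N_{Y/X}^{-n})\leq K_2M_R R^n$; in Case I\!I the same holds for some fixed $R>1$. Since $g_{jk}$ converges on $|w_j|<1$, Cauchy estimates give $\max_{j,k}|g_{jk,n}|\leq C\rho^{-n}$ for any $\rho<1$; choosing $R$ (in Case I) or shrinking $\rho$ against the fixed $R$ (in Case I\!I) so that $R/\rho<1/r$ for a suitable $r\in(0,1)$, one obtains $|f_{j,n}|\leq C K_2 M_R (R/\rho)^n$, whence $f_j$ converges on $|w_j|<r$. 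Thus $\{(W\cap V_j, f_j)\}$ is a genuine $0$-cochain in $\mathcal{C}_{\mathcal{F}}$ over $W=\{|w_j|<r\}$ solving $\delta(\{f_j\})=\{g_{jk}\}$ modulo the degree-$0$ part, which shows the restriction-limit map is \emph{injective}.

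For \emph{surjectivity}, given any $\{(U_{jk},c_{jk})\}\in\check{Z}^1(\{U_j\},\mathbb{C})$ I would produce a preimage by taking the leafwise-constant extension: set $\tilde g_{jk}:=c_{jk}$ as a constant function (a section of $\mathcal{C}_{\mathcal{F}}$ over each $V_{jk}$), giving $\{(V_{jk},\tilde g_{jk})\}\in\check{Z}^1(\{V_j\},\mathcal{C}_{\mathcal{F}})$ whose restriction to $\{U_j\}$ is the prescribed class; this is routine once one notes $\check H^1(\{U_j\},\mathbb{C})$ is computed by constant cochains since $\{U_j\}$ is fine enough (as arranged in \S\ref{section:kigou_settei}). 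Finally, to conclude the ``especially'' clause of Proposition \ref{prop:hol_tub_nbhd_aru}, I would remark that once the limit map is bijective, applying Proposition \ref{prop:hol_tub_nbhd_aru} produces the functions $\widehat z_j$ with $\widehat z_j-\widehat z_k\equiv A_{jk}$ on $W\cap V_{jk}$, so after shrinking $V$ to $W$ and renaming, $V$ is a holomorphic tubular neighborhood of $Y$; the rescaling of $w_j$'s is only to restore the normalization $V_j=U_j\times\{|w_j|<1\}$. The one point needing care throughout is the bookkeeping of the degree-$0$ coefficient: the map $\{g_{jk}\}\mapsto\{g_{jk,0}\}$ is exactly the map to $\check H^1(\{U_j\},\mathbb{C})$, and what the convergence argument really shows is that its kernel (classes with $\{g_{jk,0}\}$ a coboundary of constants) dies in the limit, while nothing beyond $\check H^1(\{U_j\},\mathbb{C})$ survives — which is precisely the asserted bijection.
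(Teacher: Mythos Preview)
Your argument is correct and follows essentially the same route as the paper: expand in $w_j$, solve coefficient-by-coefficient using Lemmata \ref{lem:ueda_type_estim_K_Kconst_explicitly} and \ref{lem:coh_of_ellipt_curve_OCF}, and bound $|f_{j,n}|$ via Proposition \ref{prop:ueda_lemma} combined with the Case I/I\!I growth hypothesis and Cauchy's inequality to get convergence on a shrunk neighborhood; surjectivity by constant extension is exactly what the paper dismisses as ``clear''.

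One small bookkeeping point: you fix a representative $\{(V_{jk},g_{jk})\}\in\check Z^1(\{V_j\},\mathcal C_{\mathcal F})$ on all of $V$, but an arbitrary class in the direct limit is represented only on some $W\cap V_{jk}$, so the radius of convergence of each $g_{jk}$ is a priori some $\varepsilon>0$ (depending on $W$) rather than $1$. The paper makes this explicit---Cauchy gives $|g_{jk,n}|\le C\varepsilon^{-n}$, hence $|f_{j,n}|\le C'(\varepsilon/R)^{-n}$ and $f_j$ converges on $\{|w_j|<\varepsilon/R\}$---but your estimate goes through verbatim after this substitution, since all that is required is convergence on \emph{some} smaller $W'$.
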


\begin{proof}
As the surjectivity is clear, we show the injectivity of this map. 
Take an open neighborhood $W$ of $Y$ in $V$ and an element $\{(W\cap V_{jk}, g_{jk})\}\in\check{Z}^1(\{W\cap V_{j}\}, \mathcal{C}_{\mathcal{F}})$ such that $[\{(U_{jk}, g_{jk}|_{U_{jk}})\}]=0\in \check{H}^1(\{U_j\}, \mathbb{C})$. 
Let 
\begin{equation}\label{eq:firstone_intheproofof_lem:directlim_IorII_forC_F}
g_{jk}(w_j) = \sum_{n=0}^\infty g_{jk, n}\cdot w_j^n
\end{equation}
be the Taylor expansion. As $[\{(U_{jk}, g_{jk, 0})\}]=0\in \check{H}^1(\{U_j\}, \mathbb{C})$, one can take $\{(U_j, f_{j, 0})\}\in\check{C}^0(\{U_j\}, \mathbb{C})$ such that $\delta(\{(U_j, f_{j, 0})\})=\{(U_{jk}, g_{jk, 0})\}\in \check{Z}^1(\{U_j\}, \mathbb{C})$. 
Then it follows from Lemmata \ref{lem:ueda_type_estim_K_Kconst_explicitly} and  \ref{lem:coh_of_ellipt_curve_OCF}, and by the same argument as in the proof of Lemma \ref{lem:formal_solution_of_g}, that 
there uniquely exists $f_{j, n}\in\mathbb{C}$ for each $j$ and $n\in\mathbb{Z}_{>0}$ such that 
\begin{equation}\label{eq:secondone_intheproofof_lem:directlim_IorII_forC_F}
\max_j|f_{j, n}|\leq K_{\rm const}(N_{Y/X}^{-n})\cdot \max_{j, k}|g_{jk, n}|
\end{equation}
and that the formal power series 
\[
f_j(w_j) := \sum_{n=0}^\infty f_{j, n}\cdot w_j^n
\]
satisfies $-f_j+f_k=g_{jk}$ formally. 

Take an interior point $z_{jk}^0\in U_{jk}$ for each $j$ and $k$. Set 
\[
\ve := \frac{1}{2}\cdot \min_{j, k}\sup\{r>0\mid \{(z_{jk}^0, w_j)\mid |w_j|\leq r\}\subset W\cap V_{jk}\}, 
\]
which is clearly positive. 
Note that the radius of convergence of the power series (\ref{eq:firstone_intheproofof_lem:directlim_IorII_forC_F}) is larger than or equal to $2\ve$ for each $j$ and $k$. 
Therefore it follows from Cauchy's inequality and  that there exists a positive constant $C$ such that 
\[
\max_{j, k}|g_{jk, n}|\leq \frac{C}{\ve^n}
\]
holds for each $j$, $k$, and $n\in\mathbb{Z}_{>0}$. 
Thus, by definition of Case I and I\!I, the estimate (\ref{eq:secondone_intheproofof_lem:directlim_IorII_forC_F}), and Proposition \ref{prop:ueda_lemma}, we have the existence of positive numbers $C'$ and $R>1$ such that 
\[
|f_{j, n}|\leq \frac{C'}{(\ve/R)^n}
\]
holds for each $j$ and $n\in\mathbb{Z}_{>0}$. 
As a consequence, we have that $\{(W'\cap V_j, f_j)\}$ is an element of $\check{C}^0(\{W'\cap V_j\}, \mathcal{C}_{\mathcal{F}})$ which satisfies $\delta(\{(W'\cap V_j, f_j)\})=\{(W'\cap V_{jk}, g_{jk}|_{W'\cap V_{jk}})\}$, where $W':=\textstyle\bigcup_j\{|w_j|<\ve/R\}$. 
Therefore it follows that the element of $\varinjlim_{W}\check{H}^1(\{W\cap V_j\}, \mathcal{C}_{\mathcal{F}})$ defined by $g_{jk}$'s is trivial, from which the assertion follows. 
\end{proof}

Note that, by using the chart $\{(V_j, (z_j, w_j))\}$ as in Remark \ref{rmk:niceVjs_whenCaseI_II}, one can also show the following variant of Lemma \ref{lem:directlim_IorII_forC_F}. 
\begin{lemma}\label{lem:case2_kinou_lim_zero}
In Case I and I\!I, the natural map $\varinjlim_{W}H^1(W, \mathcal{O}_W)\to H^1(Y, \mathcal{O}_Y)$ is a linear bijection. 
Especially, in Case I and I\!I, the following holds for any holomorphic line bundle $L$ on $V$: 
There exists a holomorphic tubular neighborhood $W$ of $Y$ such that $L|_W\cong p^*(L|_Y)$ holds, where $p\colon W\to Y$ is the holomorphic retraction which corresponds to the projection $N_{Y/V}\to Y$. 
\end{lemma}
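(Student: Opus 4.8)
The plan is to run the same argument as in the proof of Lemma \ref{lem:directlim_IorII_forC_F}, but now for the sheaf $\mathcal{O}$ rather than $\mathcal{C}_{\mathcal{F}}$, which requires us to be in a setting where the coboundary/cocycle manipulations on Taylor coefficients actually produce cocycles valued in $N_{Y/V}^{-n}$. By Remark \ref{rmk:keisikikai_C_F_or_holtubnbhd}, this works provided the charts $\{(V_j,(z_j,w_j))\}$ satisfy $\partial z_j/\partial w_k\equiv 0$, i.e. provided $V$ is already a holomorphic tubular neighborhood with $z_j=A_{jk}+z_k$ on each $V_{jk}$. Since we are in Case I or I\!I, Proposition \ref{prop:hol_tub_nbhd_aru} (applicable via Lemma \ref{lem:directlim_IorII_forC_F}) lets us shrink $V$ and adjust coordinates so that this normalization holds; we work with the charts of Remark \ref{rmk:niceVjs_whenCaseI_II} from the outset.

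First I would prove surjectivity of $\varinjlim_W H^1(W,\mathcal{O}_W)\to H^1(Y,\mathcal{O}_Y)$, which is immediate from the fact that $Y$ is a deformation retract of a fundamental system of neighborhoods (or directly: a cocycle on $\{U_j\}$ extends $w_j$-independently to a cocycle on $\{V_j\}$). Then, for injectivity, I would take a neighborhood $W$ of $Y$ and a cocycle $\{(W\cap V_{jk},g_{jk})\}\in\check{Z}^1(\{W\cap V_j\},\mathcal{O}_W)$ whose restriction to $\{U_j\}$ is a coboundary, expand $g_{jk}(z_j,w_j)=\sum_{n\ge 0}g_{jk,n}(z_j)w_j^n$, and observe (exactly as in Lemma \ref{lem:formal_solution_of_g}, using that $V$ is now a holomorphic tubular neighborhood so $w_j=t_{jk}w_k$ with $t_{jk}\in\mathrm{U}(1)$ constant) that each $\{(U_{jk},g_{jk,n})\}$ is a cocycle in $\check{Z}^1(\{U_j\},\mathcal{O}_Y(N_{Y/V}^{-n}))$. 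For $n=0$ it is a coboundary by hypothesis; for $n\ge 1$ it is automatically a coboundary by Lemma \ref{lem:coh_of_ellipt_curve_OCF} since $N_{Y/V}^{-n}$ is a non-trivial unitary flat bundle. Solving term by term via Lemma \ref{lem:ueda_type_estim_K_Kconst_explicitly} gives unique $f_{j,n}\colon U_j\to\mathbb{C}$ with $\max_j\sup_{U_j}|f_{j,n}|\le K(N_{Y/V}^{-n})\cdot\max_{j,k}\sup_{U_{jk}}|g_{jk,n}|$, and Cauchy's inequality bounds $\max_{j,k}\sup|g_{jk,n}|\le C/\ve^n$ on a slightly smaller polydisc. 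By Proposition \ref{prop:ueda_lemma}, $K(N_{Y/V}^{-n})\le K_2/d(\mathbb{I}_Y,N_{Y/V}^{-n})$, so the defining inequality of Case I or I\!I forces $|f_{j,n}|\le C'/(\ve/R)^n$ on $U_j$ for some $R>1$; hence $f_j(z_j,w_j):=\sum_n f_{j,n}(z_j)w_j^n$ converges on $W':=\bigcup_j\{|w_j|<\ve/R\}$ and realizes $\{g_{jk}\}$ as a coboundary there, so the class dies in the direct limit.

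For the "especially" clause, apply the statement to $L$: since $\varinjlim_W H^1(W,\mathcal{O}_W^\times)\to H^1(Y,\mathcal{O}_Y^\times)$ is injective on the relevant piece — more precisely, compare the exponential sequences on $W$ and on $Y$, using that $H^1(W,\mathbb{Z})\cong H^1(Y,\mathbb{Z})$ and $H^2(W,\mathbb{Z})\cong H^2(Y,\mathbb{Z})=\mathbb{Z}$ are unchanged (as $Y\hookrightarrow W$ is a homotopy equivalence for $W$ in the fundamental system) together with the just-proved bijection on $H^1(\mathcal{O})$ — one gets that the natural map $\varinjlim_W \mathrm{Pic}(W)\to\mathrm{Pic}(Y)$ is injective in the limit. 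Since $p^*(L|_Y)$ and $L$ have the same restriction to $Y$, they agree on some holomorphic tubular neighborhood $W$. I expect the main obstacle to be the bookkeeping in the "especially" part: one must check that the class $[L\otimes p^*(L|_Y)^{-1}]$, which restricts trivially to $Y$, actually lies in the subgroup of $\mathrm{Pic}(W)$ killed by the $H^1(\mathcal{O})$-computation — this needs the topological triviality of $L\otimes p^*(L|_Y)^{-1}$ over $W$, which follows because $c_1$ is detected on $H^2(W,\mathbb{Z})=H^2(Y,\mathbb{Z})$ and vanishes on $Y$ — after which the analytic part reduces to the bijection already proven. The cocycle-coefficient estimate itself is routine once the holomorphic-tubular-neighborhood normalization is in place, which is precisely why Proposition \ref{prop:hol_tub_nbhd_aru} is invoked first.
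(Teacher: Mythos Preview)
Your proposal is correct and follows essentially the same approach as the paper: the paper likewise invokes the chart of Remark \ref{rmk:niceVjs_whenCaseI_II} (available via Proposition \ref{prop:hol_tub_nbhd_aru}) and then simply cites the argument of Lemma \ref{lem:directlim_IorII_forC_F} (together with \cite[Lemma 3.1]{KU2} and Remark \ref{rmk:keisikikai_C_F_or_holtubnbhd}) for the bijectivity of $\varinjlim_W H^1(W,\mathcal{O}_W)\to H^1(Y,\mathcal{O}_Y)$, whereas you spell that estimate out explicitly. For the line-bundle statement the paper also forms the difference $F:=L\otimes p^*(L|_Y)^{-1}$, observes it is topologically trivial since $W$ deformation-retracts onto $Y$, lifts $F$ to a class $\xi\in H^1(V,\mathcal{O}_V)$ with $\xi|_Y=0$ via the exponential sequence, and then kills $\xi$ on a smaller $W$ by the first part---exactly your exponential-sequence comparison phrased slightly differently.
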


\begin{proof}
By using the chart $\{(V_j, (z_j, w_j))\}$ as in Remark \ref{rmk:niceVjs_whenCaseI_II}, 
it follows from the same argument as in the proof of Lemma \ref{lem:directlim_IorII_forC_F} (and the proof of \cite[Lemma 3.1]{KU2}) that the natural map $\varinjlim_{W}\check{H}^1(\{W\cap V_j\}, \mathcal{O}_W)\to \check{H}^1(\{U_j\}, \mathcal{O}_Y)$ is a linear bijection (See also Remark \ref{rmk:keisikikai_C_F_or_holtubnbhd}). Therefore the former half of the assertion holds. 
In order to show the latter half of the assertion, take a holomorphic line bundle $L$ on $V$. 
Set $F:=L\otimes p^*(L|_Y)$, where $p\colon V\to Y$ is the retraction as in the assertion. 
Note that $F|_Y$ is holomorphically trivial by definition, and that $F$ is topologically trivial on $V$, since we may assume that $V$ is a deformation retract of $Y$ by shrinking $V$ if necessary. 
Thus, by comparing the long exact sequences which come from the exponential short exact sequences $0\to \mathbb{Z}\to \mathcal{O}_Y\to \mathcal{O}_Y^*\to 0$ on $Y$ and $0\to \mathbb{Z}\to \mathcal{O}_V\to \mathcal{O}_V^*\to 0$ on $V$, one has that there exists an element $\xi \in H^1(V, \mathcal{O}_V)$ such that $e(\xi)=F$ and $\xi|_Y=0$, where $e\colon H^1(V, \mathcal{O}_V)\to H^1(V, \mathcal{O}_V^*)$ is the map induced from the exponential map. 
By the former half of the assertion, $\xi|_W=0\in H^1(W, \mathcal{O}_W)$ holds for a neighborhood $W$ of $Y$ in $V$. Thus it follows that $F|_W$ is holomorphically trivial, from which the assertion follows. 
\end{proof}

\subsection{Observation on $\check{H}^1(\{V_j\}, \mathcal{C}_{\mathcal{F}})$}

In this subsection, we show the following: 
\begin{proposition}\label{prop:main_for_H^1_of_C_F}
The following holds: \\
$(i)$ In Case I, by using the chart $\{(V_j, (z_j, w_j))\}$ as in Remark \ref{rmk:niceVjs_whenCaseI_II}, the following holds: For any $\mathfrak{g}=\{(V_{jk}, g_{jk})\}\in \check{Z}^1(\{V_j\}, \mathcal{C}_{\mathcal{F}})$ with $g_{jk}|_{U_{jk}}\equiv 0$, $\mathfrak{g}\in \check{B}^1(\{V_j\}, \mathcal{C}_{\mathcal{F}})$ holds. 
Especially, the map $\check{H}^1(\{V_j\}, \mathbb{C})\to \check{H}^1(\{V_j\}, \mathcal{C}_{\mathcal{F}})$ induced from the natural morphism $\mathbb{C}\to \mathcal{C}_{\mathcal{F}}$ is a linear bijection. \\
$(ii)$ In Case I\!I and I\!I\!I, $\check{H}^1(\{V_j\}, \mathcal{C}_{\mathcal{F}})$ is of non-Hausdorff type: i.e. it is of infinite dimension and not Hausdorff. 
\end{proposition}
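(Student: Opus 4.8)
The plan is to analyze a $1$-cocycle $\mathfrak{g}=\{(V_{jk},g_{jk})\}\in\check Z^1(\{V_j\},\mathcal C_{\mathcal F})$ through its Taylor expansion $g_{jk}(w_j)=\sum_{n\ge 0}g_{jk,n}w_j^n$ in the leafwise variable, where (after Proposition \ref{prop:hol_tub_nbhd_aru} in Case I and II, giving a holomorphic tubular neighborhood with $z_j-z_k\equiv A_{jk}$) each slice $\{(U_{jk},g_{jk,n})\}$ lies in $\check Z^1(\{U_j\},\mathbb C(N_{Y/X}^{-n}))$. For part $(i)$ I would start from $\mathfrak g$ with $g_{jk}|_{U_{jk}}\equiv 0$, i.e. $g_{jk,0}=0$, and invoke Lemma \ref{lem:formal_solution_of_g} to produce the unique formal primitive $f_j(w_j)=\sum_{n\ge 1}f_{j,n}w_j^n$ with $\max_j|f_{j,n}|\le K_{\mathrm{const}}(N_{Y/X}^{-n})\max_{j,k}|g_{jk,n}|$. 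The point is then a convergence estimate: Cauchy's inequality applied on the polydiscs $V_{jk}\cong U_{jk}\times\{|w_j|<1\}$ bounds $\max_{j,k}|g_{jk,n}|\le C$ for a constant $C$ independent of $n$ (here one must be slightly careful to use an interior point of each $U_{jk}$ and a radius slightly below $1$, exactly as in the proof of Lemma \ref{lem:directlim_IorII_forC_F}), while Proposition \ref{prop:ueda_lemma} gives $K_{\mathrm{const}}(N_{Y/X}^{-n})\le K_2/d(\mathbb I_Y,N_{Y/X}^{-n})$. The \emph{defining} feature of Case I is that $1/d(\mathbb I_Y,N_{Y/X}^{n})\le M_R R^n$ for \emph{every} $R>1$; choosing $R$ close to $1$ yields $|f_{j,n}|\le C' R^n$, so $f_j$ has radius of convergence $\ge 1/R$ for every $R>1$, hence $\ge 1$. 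Thus $\{(V_j,f_j)\}\in\check C^0(\{V_j\},\mathcal C_{\mathcal F})$ and $\delta(\{(V_j,f_j)\})=\mathfrak g$, proving $\mathfrak g\in\check B^1$. The consequence that $\check H^1(\{V_j\},\mathbb C)\to\check H^1(\{V_j\},\mathcal C_{\mathcal F})$ is bijective follows because surjectivity is clear (every cocycle modulo its $n=0$ part is a coboundary by what we just showed) and injectivity is the statement that a locally constant cocycle which is a $\mathcal C_{\mathcal F}$-coboundary is already a $\mathbb C$-coboundary — this is handled by taking Taylor expansions of the primitive and matching the $n=0$ parts, plus the fact that $H^1(Y,\mathbb C)\hookrightarrow H^1(Y,\mathcal O_Y)$ via the natural map together with uniqueness in Lemma \ref{lem:ueda_type_estim_K_Kconst_explicitly}.

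For part $(ii)$, the strategy is to exhibit an explicit sequence of cocycles witnessing non-Hausdorffness, i.e. a sequence $\mathfrak g^{(m)}\in\check B^1$ converging in the Fréchet topology of $\check C^1(\{V_j\},\mathcal C_{\mathcal F})$ to a cocycle $\mathfrak g^{(\infty)}\notin\check B^1$, and then to produce infinitely many linearly independent classes. In Case III, for each $n$ the constant $K_{\mathrm{const}}(N_{Y/X}^{-n})$ is by definition not dominated by $M R^n$ for any fixed $R,M$, so one can pick a rapidly growing subsequence $n_1<n_2<\cdots$ along which $K_{\mathrm{const}}(N_{Y/X}^{-n_\ell})$ grows faster than any exponential. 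Using (the $\check Z^1$-version via Lemma \ref{lem:coh_of_ellipt_curve_OCF}) Lemma \ref{lem:ueda_type_estim_K_Kconst_explicitly}, for each $\ell$ choose $\{(U_j,\phi^{(\ell)}_j)\}\in\check C^0(\{U_j\},\mathbb C(N_{Y/X}^{-n_\ell}))$ with $\max_j|\phi^{(\ell)}_j|$ close to $1$ but $\max_{j,k}|\delta\phi^{(\ell)}|$ comparable to $1/K_{\mathrm{const}}(N_{Y/X}^{-n_\ell})$, and set $g^{(\infty)}_{jk}:=\sum_\ell (\delta\phi^{(\ell)})_{jk}\,w_j^{n_\ell}$; the coefficients decay faster than any geometric rate, so $g^{(\infty)}_{jk}$ is holomorphic on $V_{jk}$ and defines a cocycle in $\check Z^1(\{V_j\},\mathcal C_{\mathcal F})$, and its truncations $\mathfrak g^{(m)}$ (sum over $\ell\le m$) are coboundaries of the finite sums $\sum_{\ell\le m}\phi^{(\ell)}_j w_j^{n_\ell}$, converging to $\mathfrak g^{(\infty)}$. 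But any primitive of $\mathfrak g^{(\infty)}$ must, by the uniqueness part of Lemma \ref{lem:formal_solution_of_g}, have $n_\ell$-th coefficient of size $\sim K_{\mathrm{const}}(N_{Y/X}^{-n_\ell})$, which grows too fast for radius of convergence $\ge 1$; hence $\mathfrak g^{(\infty)}\notin\check B^1$, so $\check B^1$ is not closed and $\check H^1(\{V_j\},\mathcal C_{\mathcal F})$ is not Hausdorff. Case II is essentially identical: even though $K_{\mathrm{const}}(N_{Y/X}^{-n})\le M_R R^n$ for some fixed $R>1$, being \emph{not} in Case I means this fails for some $R'>1$ smaller than $R$ — more precisely there is some $R_0>1$ such that $1/d(\mathbb I_Y,N_{Y/X}^{n})>M R_0^n$ infinitely often for every $M$ — so along a suitable subsequence $K_{\mathrm{const}}(N_{Y/X}^{-n_\ell})\ge M R_0^{n_\ell}$ with $M\to\infty$, and inserting a geometric factor $(1/R_0)^{n_\ell}\cdot(1/S)^{n_\ell}$ (with $1<S$) into the construction keeps everything holomorphic and convergent on $V$ while the primitive's $n_\ell$-coefficients still fail the radius-$\ge 1$ test because $M\to\infty$ beats the fixed geometric correction. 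Infinite-dimensionality then follows by the same device applied to infinitely many disjoint blocks of indices, producing infinitely many independent non-Hausdorff directions — or more cheaply, from the injection $H^0(V,\mathcal O_V/\mathcal C_{\mathcal F})\oplus(\text{infinitely many independent }\mathfrak g^{(\infty)})\hookrightarrow\check H^1$, noting distinct subsequences give classes that are independent modulo $\check B^1$.

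The main obstacle I anticipate is the Case II bookkeeping: carefully extracting, from the hypothesis ``not in Case I but in Case II,'' a concrete rate $R_0>1$ and a subsequence along which $K_{\mathrm{const}}$ grows at least like $M_\ell R_0^{n_\ell}$ with $M_\ell\to\infty$, and then verifying that the geometric correction factors one inserts to make $g^{(\infty)}_{jk}$ converge on the unit polydiscs $V_{jk}$ are not so large as to also rescue the primitive. One has to balance two exponential rates against a super-exponentially-or-just-barely-faster growing sequence $K_{\mathrm{const}}$, and the inequality chain through Cauchy's estimate and Proposition \ref{prop:ueda_lemma} must be kept tight enough. A secondary technical point is justifying that the truncations $\mathfrak g^{(m)}$ converge to $\mathfrak g^{(\infty)}$ in the correct Fréchet topology (uniform convergence with all derivatives on compact subsets of the $V_{jk}$), which follows from the faster-than-geometric decay of the block coefficients but should be stated explicitly; and one must also confirm that each $\mathfrak g^{(m)}$ genuinely lies in $\check B^1(\{V_j\},\mathcal C_{\mathcal F})$, i.e. the finite-sum primitive is a bona fide holomorphic section of $\mathcal C_{\mathcal F}$ on each $V_j$, which is immediate since it is a polynomial in $w_j$ with coefficients pulled back from $Y$ via $p$.
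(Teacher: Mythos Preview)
Your part $(i)$ is correct and matches the paper's argument.

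Your part $(ii)$ has a genuine gap in the normalization. You choose $\phi^{(\ell)}$ with $\max_j|\phi^{(\ell)}_j|\approx 1$ and $\max_{j,k}|\delta\phi^{(\ell)}|\approx 1/K_{\mathrm{const}}(N_{Y/X}^{-n_\ell})$, and set $g^{(\infty)}_{jk}=\sum_\ell(\delta\phi^{(\ell)})_{jk}\,w_j^{n_\ell}$. But the unique formal primitive of $\mathfrak g^{(\infty)}$ is then precisely $f_j=\sum_\ell\phi^{(\ell)}_j\,w_j^{n_\ell}$, whose $n_\ell$-th coefficient is $\phi^{(\ell)}_j$ of size $\le 1$ --- \emph{not} $\sim K_{\mathrm{const}}$ as you assert. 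Since these coefficients are bounded, each $f_j$ has radius of convergence $\ge 1$ and hence is a bona fide element of $\Gamma(V_j,\mathcal C_{\mathcal F})$; thus your $\mathfrak g^{(\infty)}$ \emph{is} a coboundary and the argument collapses. The ``geometric factor $(1/R_0)^{n_\ell}(1/S)^{n_\ell}$'' you propose in Case~II only shrinks everything further and cannot repair this; scaling \emph{down} helps the primitive converge even better.

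The fix (and this is what the paper does) is to normalize the other way: choose $\phi^{(\ell)}$ nearly attaining the supremum defining $K_{\mathrm{const}}$ and rescale so that $\max_{j,k}|g_{jk,n_\ell}|:=\max_{j,k}|\delta\phi^{(\ell)}|=1$. Then the cocycle $g^{(\lambda)}_{jk}=\sum_\mu g_{jk,n_\mu}w_j^{n_\mu}$ has coefficients bounded by $1$ and so converges on $\{|w_j|<1\}$, while the unique formal primitive has $n_\nu$-th coefficient $>\tfrac12 K_{\mathrm{const}}(N_{Y/X}^{-n_\nu})$. Being ``not in Case~I'' yields an $R>1$ and a subsequence $\{n_\nu\}$ with $K_{\mathrm{const}}(N_{Y/X}^{-n_\nu})>R^{n_\nu}$, so the primitive has radius $\le 1/R<1$ by Cauchy--Hadamard. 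This single normalization treats Cases~II and III uniformly, with no geometric correction needed. For linear independence the paper takes, for each $\lambda$, only the terms with index in $2^\lambda\mathbb Z_{>0}$; a hypothetical non-trivial relation $\sum_{\lambda=m}^{m+\mu}c_\lambda[\mathfrak g^{(\lambda)}]=0$ with $c_m\neq 0$ is ruled out by isolating the layer $(2^m\mathbb Z_{>0})\setminus(2^{m+1}\mathbb Z_{>0})$, where only $c_m$ contributes, and applying Cauchy--Hadamard to the formal primitive along a suitable $j_0$.
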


Here the topology of $\check{H}^1(\{V_j\}, \mathcal{C}_{\mathcal{F}})$ is defined in the same manner as 
in Lemma \ref{lem:comparizon_top_nonhausdoeff_CDcorresp}: 
i.e. by regarding the space $\check{C}^{1}(\{V_j\}, \mathcal{C}_{\mathcal{F}})$ as a Fr\'echet space by using the topology of uniform convergence on compact sets, by topologizing the space $\check{B}^1(\{V_j\}, \mathcal{C}_{\mathcal{F}})$ and $\check{Z}^1(\{V_j\}, \mathcal{C}_{\mathcal{F}})$ by the relative topologies as subspaces of $\check{C}^{1}(\{V_j\}, \mathcal{C}_{\mathcal{F}})$, 
we are regarding $\check{H}^1(\{V_j\}, \mathcal{C}_{\mathcal{F}})$ as the topological vector space obtained by the quotient $\check{Z}^1(\{V_j\}, \mathcal{C}_{\mathcal{F}})/\check{B}^1(\{V_j\}, \mathcal{C}_{\mathcal{F}})$. 

\begin{proof}[Proof of Proposition \ref{prop:main_for_H^1_of_C_F}]
Recall that, by Proposition \ref{prop:hol_tub_nbhd_aru}, 
we can take the chart $\{(V_j, (z_j, w_j))\}$ as in Remark \ref{rmk:niceVjs_whenCaseI_II} in Case I. 
Take $\mathfrak{g}=\{(V_{jk}, g_{jk})\}\in \check{Z}^1(\{V_j\}, \mathcal{C}_{\mathcal{F}})$ with $g_{jk}|_{U_{jk}}\equiv 0$. Let 
\[
g_{jk}(w_j) = \sum_{n=1}^\infty g_{jk, n}\cdot w_j^n
\]
be the Taylor expansion. 
By Lemma \ref{lem:formal_solution_of_g}, it is sufficient to show that the formal power series 
\[
f_j(w_j) := \sum_{n=1}^\infty f_{j, n}\cdot w_j^n
\]
as in the lemma has the radius of convergence larger than or equal to $1$, 
which follows from the estimate $|f_{j, n}|\leq K_{\rm const}(N_{Y/X}^{-n})\cdot \max_{j, k}|g_{jk, n}|$ and Lemma \ref{prop:ueda_lemma} (Here we again use the assumption that we are in Case I). 
Thus the assertion $(i)$ holds, since the the latter half is a simple consequence from the former half of the assertion. The assertion $(ii)$ follows from Lemma \ref{lemma:for_non_hausdorff_case} below. 
\end{proof}

\begin{lemma}\label{lemma:for_non_hausdorff_case}
In Case I\!I and I\!I\!I, there exist countably many linearly independent elements 
$\{[\mathfrak{g}^{(\lambda)}]\}_{\lambda=1}^\infty$ of the closure $K:=\overline{\{0\}}$ of $\{0\}$ in $\check{H}^1(\{V_j\}, \mathcal{C}_{\mathcal{F}})$. 
\end{lemma}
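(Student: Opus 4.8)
The plan is to produce explicitly, for each $\lambda\in\mathbb{Z}_{>0}$, a $1$-cocycle $\mathfrak{g}^{(\lambda)}$ whose cohomology class is nonzero in $\check{H}^1(\{V_j\},\mathcal{C}_{\mathcal{F}})$ yet lies in the closure of $\{0\}$, and then to check these classes are linearly independent. First I would fix, for each positive integer $n$, a cocycle $\mathfrak{s}_n=\{(U_{jk},s^{(n)}_{jk})\}\in\check{Z}^1(\{U_j\},\mathbb{C}(N_{Y/X}^{-n}))$ representing a generator of the one-dimensional space $\check{H}^1(\{U_j\},\mathbb{C}(N_{Y/X}^{-n}))$... wait, that space is zero by Lemma \ref{lem:coh_of_ellipt_curve_OCF}; so instead I use $\delta$-primitives with bad norm growth. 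Concretely, by Proposition \ref{prop:ueda_lemma} and the definition of Case I\!I / I\!I\!I, there is a subsequence $n_1<n_2<\cdots$ along which $K_{\rm const}(N_{Y/X}^{-n_i})$ grows super-geometrically; for each $i$ pick $\{(U_j,f^{(i)}_j)\}\in\check{C}^0(\{U_j\},\mathbb{C}(N_{Y/X}^{-n_i}))$ with $\max_j|f^{(i)}_j|=1$ but $\max_{j,k}|f^{(i)}_j-s_{jk}f^{(i)}_k|$ as small as we like relative to any fixed geometric rate. Setting $g^{(\lambda)}_{jk}(w_j):=\sum_i c^{(\lambda)}_i\,(f^{(i)}_j-s_{jk}f^{(i)}_k)\,w_j^{n_i}$ with rapidly decaying coefficients $c^{(\lambda)}_i$ gives a convergent element of $\check{Z}^1(\{V_j\},\mathcal{C}_{\mathcal{F}})$ whose formal primitive from Lemma \ref{lem:formal_solution_of_g} is $f^{(\lambda)}_j(w_j)=\sum_i c^{(\lambda)}_i f^{(i)}_j w_j^{n_i}$, whose radius of convergence I can force to be strictly less than $1$ by choosing the $c^{(\lambda)}_i$ so that $|c^{(\lambda)}_i|^{1/n_i}$ does not go to $0$, while keeping $|c^{(\lambda)}_i|\cdot\max_{j,k}|f^{(i)}_j-s_{jk}f^{(i)}_k|$ summable with arbitrarily fast decay. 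By the equivalence $(i)\Leftrightarrow(ii)$ in Lemma \ref{lem:formal_solution_of_g}, $[\mathfrak{g}^{(\lambda)}]\neq0$.

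The second step is to show $[\mathfrak{g}^{(\lambda)}]\in\overline{\{0\}}$. For this I would exhibit, for each fixed $\lambda$, a sequence of coboundaries converging to $\mathfrak{g}^{(\lambda)}$ in $\check{C}^1(\{V_j\},\mathcal{C}_{\mathcal{F}})$: truncate the formal primitive, $f^{(\lambda)}_{j,(N)}(w_j):=\sum_{i:\,n_i\le N} c^{(\lambda)}_i f^{(i)}_j w_j^{n_i}$, which is a polynomial hence an honest section of $\mathcal{C}_{\mathcal{F}}$ on all of $V_j$, and let $\mathfrak{g}^{(\lambda)}_{(N)}:=\delta(\{(V_j,f^{(\lambda)}_{j,(N)})\})\in\check{B}^1(\{V_j\},\mathcal{C}_{\mathcal{F}})$. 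On any compact subset $K'\Subset V$ contained in $\bigcup_j\{|w_j|\le r\}$ with $r<1$ and, crucially, $r$ less than the common radius of convergence of the tails, the Taylor tails $\sum_{i:\,n_i>N} c^{(\lambda)}_i (f^{(i)}_j-s_{jk}f^{(i)}_k)w_j^{n_i}$ tend uniformly to zero as $N\to\infty$ together with all derivatives, by a Cauchy-estimate / Weierstrass $M$-test argument using the summability we arranged for $|c^{(\lambda)}_i|\cdot\max_{j,k}|f^{(i)}_j-s_{jk}f^{(i)}_k|$; thus $\mathfrak{g}^{(\lambda)}_{(N)}\to\mathfrak{g}^{(\lambda)}$ in the Fréchet topology, so $\mathfrak{g}^{(\lambda)}$ lies in $\overline{\check{B}^1}$, i.e. $[\mathfrak{g}^{(\lambda)}]\in K$. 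The delicate point here, and what I expect to be the main obstacle, is reconciling two competing requirements on the scalar weights $c^{(\lambda)}_i$: the coboundary parts $|c^{(\lambda)}_i|\cdot\max_{j,k}|f^{(i)}_j-s_{jk}f^{(i)}_k|$ must decay fast enough (faster than any geometric rate) so that $\mathfrak{g}^{(\lambda)}$ is a convergent cocycle on $V$ and is approximable by its $\delta$-truncations on a neighborhood of $Y$, while simultaneously the primitive parts $|c^{(\lambda)}_i|=|c^{(\lambda)}_i|\cdot\max_j|f^{(i)}_j|$ must \emph{not} decay geometrically, so that $\limsup_i|c^{(\lambda)}_i|^{1/n_i}>0$ and the primitive genuinely fails to converge on the unit polydisc. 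The gap between $\max_j|f^{(i)}_j|=1$ and $\max_{j,k}|f^{(i)}_j-s_{jk}f^{(i)}_k|\le 1/K_{\rm const}(N_{Y/X}^{-n_i})$ is exactly the room the Case I\!I/I\!I\!I hypothesis provides, and one must choose the subsequence $\{n_i\}$ sparse enough (growing fast) that this ratio beats any prescribed geometric factor along it; a sufficiently lacunary choice makes both conditions simultaneously achievable.

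Finally, for linear independence of $\{[\mathfrak{g}^{(\lambda)}]\}_{\lambda\ge1}$, I would arrange the constructions to use disjoint blocks of indices: let $\{n_i\}$ be partitioned into infinite subsequences $\{n^{(\lambda)}_i\}_{i\ge1}$ over $\lambda\in\mathbb{Z}_{>0}$, and build $\mathfrak{g}^{(\lambda)}$ using only exponents from the $\lambda$-th block. Then a finite relation $\sum_\lambda a_\lambda[\mathfrak{g}^{(\lambda)}]=0$ in $\check{H}^1(\{V_j\},\mathcal{C}_{\mathcal{F}})$ means $\sum_\lambda a_\lambda\mathfrak{g}^{(\lambda)}=\delta(\{(V_j,h_j)\})$ for some $\{(V_j,h_j)\}\in\check{C}^0(\{V_j\},\mathcal{C}_{\mathcal{F}})$; comparing Taylor coefficients in $w_j$ of order $n^{(\lambda)}_i$ and invoking the uniqueness clause of Lemma \ref{lem:formal_solution_of_g} (valid since $N_{Y/X}$ is non-torsion, so $H^0(Y,\mathbb{C}(N_{Y/X}^{-n}))=0$) forces the order-$n^{(\lambda)}_i$ part of $h_j$ to equal $a_\lambda f^{(\lambda),i}_j$, hence the $w_j$-expansion of $h_j$ restricted to the $\lambda$-block reproduces $a_\lambda$ times the divergent primitive of $\mathfrak{g}^{(\lambda)}$; since $h_j$ is genuinely holomorphic on $V_j\supset\{|w_j|<1\}$, its expansion converges on the unit disc, which by the divergence of that primitive forces $a_\lambda=0$ for every $\lambda$. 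This gives countably many linearly independent classes in $K=\overline{\{0\}}$, so $\check{H}^1(\{V_j\},\mathcal{C}_{\mathcal{F}})$ is not Hausdorff and (being of infinite dimension, as $K$ is) of non-Hausdorff type, completing the proof of Lemma \ref{lemma:for_non_hausdorff_case} and hence of Proposition \ref{prop:main_for_H^1_of_C_F}$(ii)$.
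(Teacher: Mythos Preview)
Your approach is essentially the paper's: exploit the growth of $K_{\rm const}(N_{Y/X}^{-n})$ along a subsequence to build cocycles $\mathfrak{g}^{(\lambda)}$ whose formal primitives (from Lemma~\ref{lem:formal_solution_of_g}) diverge on $\{|w_j|<1\}$, show membership in $\overline{\{0\}}$ by polynomial truncation of those primitives, and obtain linear independence via uniqueness of the formal primitive together with disjointness of exponent sets. The paper normalizes $\max_{j,k}|g_{jk,n}|=1$ (so no auxiliary weights $c_i^{(\lambda)}$ are needed) and uses the nested index sets $2^\lambda\mathbb{Z}_{>0}$ rather than a disjoint partition, but these are cosmetic differences.

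Two quantitative statements in your write-up need tightening, however. First, in Case~I\!I the constants $K_{\rm const}(N_{Y/X}^{-n_i})$ are only guaranteed to exceed $R^{n_i}$ for \emph{one} fixed $R>1$, not ``super-geometrically''; and making $\{n_i\}$ lacunary does nothing to improve this ratio. Second, for the primitive to fail to converge on $\{|w_j|<1\}$ you need $\limsup_i|c_i^{(\lambda)}f_j^{(i)}|^{1/n_i}>1$ for some $j$, not merely $\limsup_i|c_i^{(\lambda)}|^{1/n_i}>0$; with your normalization $\max_j|f_j^{(i)}|=1$ this forces $|c_i^{(\lambda)}|^{1/n_i}$ to exceed $1$ along a subsequence (e.g.\ take $c_i^{(\lambda)}=r^{n_i}$ for any $1<r<R$, and use pigeonhole on the finitely many $j$ to find one index $j_0$ where $|f_{j_0}^{(i)}|$ stays bounded below infinitely often). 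With these corrections your argument is complete and matches the paper's.
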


\begin{proof}
In Case I\!I and I\!I\!I, it follows from Proposition \ref{prop:ueda_lemma} that there exist a positive number 
$R>1$ and a strictly increasing sequence $\{n_\nu\}_{\nu=1}^\infty$ of positive integers such that 
$K_{\rm const}(N_{Y/X}^{-n_\nu}) > R^{n_\nu}$ holds. 
By definition of $K_{\rm const}(N_{Y/X}^{-n_\nu})$, for each $j$ and $n\in\mathbb{Z}_{>0}$, there exists 
$f_{j, n}\in\mathbb{C}$ such that 
\[
\max_j|f_{j, n}|> \frac{1}{2} K_{\rm const}(N_{Y/X}^{-n})\cdot \max_{j, k}|g_{jk, n}|
\]
holds for $g_{jk, n}:=-f_{j, n}+t_{jk}^{-n}\cdot f_{k, n}$. 
Note that $\max_{j, k}|g_{jk, n}|>0$, since otherwise $f_{j, n}\equiv 0$ holds by the uniqueness of the $\delta$-equation (Lemma \ref{lem:ueda_type_estim_K_Kconst_explicitly}). 
Therefore, by multiplying a constant, we may assume that $\max_{j, k}|g_{jk, n}|\equiv 1$. Then it follows that 
\begin{equation}\label{estim_for_fj0_infinitely_many_nu_beforehatonosu}
\max_j|f_{j, n_\nu}|> \frac{1}{2}R^{n_\nu}
\end{equation}
holds for each $\nu$. 

For each positive integer $\lambda$, set 
\[
g_{jk}^{(\lambda)}:=\sum_{\mu\in 2^\lambda\cdot \mathbb{Z}_{>0}} g_{jk, n_\mu}\cdot w_j^{n_\mu}. 
\]
Then $\mathfrak{g}^{(\lambda)}:=\{(V_{jk}, g_{jk}^{(\lambda)})\}\in \check{Z}^1(\{V_j\}, \mathcal{C}_{\mathcal{F}})$ holds, since $\max_{j, k}|g_{jk, n}|\equiv 1$. 
As 
\[
\lim_{Q\to \infty}\delta\left(\left\{\left(V_j,\ \sum_{\ell=1}^Qf_{j, n_{\ell\cdot 2^\lambda}}\cdot w_j^{n_{\ell\cdot 2^\lambda}}\right)\right\}\right) 
=\mathfrak{g}^{(\lambda)}
\]
holds in $\check{Z}^1(\{V_j\}, \mathcal{C}_{\mathcal{F}})$, it follows that $[\mathfrak{g}^{(\lambda)}]\in K$ holds for each $\lambda\in\mathbb{Z}_{>0}$. 

In what follows we will show that $\{[\mathfrak{g}^{(\lambda)}]\}_{\lambda=1}^\infty$ is linearly independent by contradiction. 
Assume that there is a non-trivial linear relation. 
Then there exist $m, \mu\in\mathbb{Z}_{>0}$ and $(c_m, c_{m+1}, \dots, c_{m+\mu})\in \mathbb{C}^\mu$ such that $c_m\not=0$ and 
\[
\sum_{\lambda=m}^{m+\mu} c_\lambda\cdot \mathfrak{g}^{(\lambda)}\in \check{B}^1(\{V_j\}, \mathcal{C}_{\mathcal{F}})
\]
holds. Let $\widehat{c}_\nu\in \mathbb{C}$ be the constants such that 
\[
g_{jk}(w_j) := \sum_{\nu=1}^\infty \widehat{c}_{\nu}\cdot g_{jk, n_\nu}\cdot w_j^{n_\nu}
\]
satisfies $g_{jk}=\sum_{\lambda=m}^{m+\mu} c_\lambda\cdot g_{jk}^{(\lambda)}$. 
Note that such $\widehat{c}_\nu$'s can be determined from $c_m, c_{m+1}, \dots, c_{m+\mu}$: 
For example, $\widehat{c}_\nu=0$ holds for any positive integer $\nu$ which is not divisible by $2^m$, and $\widehat{c}_\nu=c_m$ holds for any positive integer $\nu$ which is divisible by $2^m$ and not divisible by $2^{m+1}$. 
By construction, it is clear that the formal solution $\{(V_j, f_j)\}$ of the equation $\delta(\{(V_j, f_j)\})=\{(V_{jk}, g_{jk})\}$ as in Lemma \ref{lem:formal_solution_of_g} is the one defined by 
\[
f_j(w_j) = \sum_{\nu=1}^\infty \widehat{c}_\nu\cdot f_{j, n_\nu}\cdot w_j^{n_\nu}. 
\]
By (\ref{estim_for_fj0_infinitely_many_nu_beforehatonosu}), the sets
\[
A_j := \left\{\nu\in (2^m\cdot \mathbb{Z}_{>0})\setminus (2^{m+1}\cdot \mathbb{Z}_{>0})\,\left|\,|f_{j, n_\nu}|> \frac{1}{2}R^{n_\nu}\right.\right\}
\]
satisfy 
\[
\bigcup_{j}A_j=(2^m\cdot \mathbb{Z}_{>0})\setminus (2^{m+1}\cdot \mathbb{Z}_{>0}), 
\]
from which it holds that $\# A_{j_0}=\infty$ holds for some $j_0$. 
As 
\[
\left|\widehat{c}_\nu\cdot f_{j_0, n_\nu}\right| = |c_m|\cdot |f_{j_0, n_\nu}|> |c_m|\cdot \frac{1}{2}R^{n_\nu}
\]
holds for any $\nu\in A_{j_0}$, 
one obtains that 
\[
R=\lim_{\nu\to \infty}\left(\frac{|c_m|}{2}R^{n_\nu}\right)^{\frac{1}{n_\nu}}
\leq \limsup_{\nu\to \infty}\left|\widehat{c}_\nu\cdot f_{j_0, n_\nu}\right|^{\frac{1}{n_\nu}}
\]
holds. 
Therefore it follows from Cauchy--Hadamard theorem that the radius of convergence of $f_{j_0}$ is less than or equal to $1/R$, which is less than $1$. Thus the contradiction follows from Lemma \ref{lem:formal_solution_of_g}. 
\end{proof}

\subsection{Observation on $\check{H}^1(\{V_j^*\}, \mathcal{C}_{\mathcal{F}}^*)$}

By a similar argument as in the previous section, we obtain the following: 
\begin{proposition}\label{prop:main_for_H^1_of_C_F_*version}
The following holds: \\
$(i)$ In Case I and I\!I, by using the chart $\{(V_j, (z_j, w_j))\}$ as in Remark \ref{rmk:niceVjs_whenCaseI_II}, the following holds: 
For any $\mathfrak{g}=\{(V_{jk}^*, g_{jk})\}\in \check{Z}^1(\{V_j^*\}, \mathcal{C}_{\mathcal{F}}^*)$, 
there exists an element $\mathfrak{f}=\{(V_j^*, f_j)\}\in \check{C}^0(\{V_j^*\}, \mathcal{C}_{\mathcal{F}}^*)$ such that $\mathfrak{g}-\delta(\mathfrak{f})\in {\rm Image}(\check{Z}^1(\{V_j\}, \mathcal{C}_{\mathcal{F}})\to \check{Z}^1(\{V_j^*\}, \mathcal{C}_{\mathcal{F}}^*))$ holds. 
Especially, the restriction map $\check{H}^1(\{V_j\}, \mathcal{C}_{\mathcal{F}})\to \check{H}^1(\{V^*_j\}, \mathcal{C}_{\mathcal{F}}^*)$ is a linear bijection. \\
$(ii)$ In Case I\!I\!I, the cokernel of the map $\check{H}^1(\{V_j\}, \mathcal{C}_{\mathcal{F}})\to \check{H}^1(\{V^*_j\}, \mathcal{C}_{\mathcal{F}}^*)$ is of non-Hausdorff type. 
\end{proposition}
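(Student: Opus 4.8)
The plan is to carry the analysis of \S4.2 over to the Laurent setting, with Lemma~\ref{lem:formal_solution_of_g_*version} playing the role of Lemma~\ref{lem:formal_solution_of_g}. Throughout I use the chart $\{(V_j,(z_j,w_j))\}$ of Remark~\ref{rmk:niceVjs_whenCaseI_II}, in which each component $g_{jk}$ of a cocycle in $\check{Z}^1(\{V_j^*\},\mathcal{C}_{\mathcal{F}}^*)$ is an honest holomorphic function of $w_j$ on $\{0<|w_j|<1\}$ whose principal part $\sum_{n\ge1}g_{jk,-n}w_j^{-n}$ extends to a holomorphic function of $w_j$ on $\mathbb{C}\setminus\{0\}$ vanishing at infinity; equivalently $\sum_{n\ge1}g_{jk,-n}X^{n}$ is entire, so $\limsup_n|g_{jk,-n}|^{1/n}=0$.

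For $(i)$: given $\mathfrak{g}=\{(V_{jk}^*,g_{jk})\}\in\check{Z}^1(\{V_j^*\},\mathcal{C}_{\mathcal{F}}^*)$, form the formal primitive $f_j=\sum_{n\ge1}f_{j,-n}w_j^{-n}$ of Lemma~\ref{lem:formal_solution_of_g_*version}, which obeys $\max_j|f_{j,-n}|\le K_{\rm const}(N_{Y/X}^{n})\cdot\max_{j,k}|g_{jk,-n}|$. In Case I and I\!I, Proposition~\ref{prop:ueda_lemma} together with the defining property of these cases gives a fixed $R>1$ and $M_R>0$ with $K_{\rm const}(N_{Y/X}^{n})\le K_2M_RR^{n}$, so $\limsup_n|f_{j,-n}|^{1/n}\le R\cdot\limsup_n(\max_{j,k}|g_{jk,-n}|)^{1/n}=0$. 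Hence each $\sum_{n\ge1}f_{j,-n}X^{n}$ is entire, $f_j\in\Gamma(V_j^*,\mathcal{C}_{\mathcal{F}}^*)$, and $\mathfrak{g}-\delta(\{(V_j^*,f_j)\})$ has components with vanishing principal part, hence lies in the image of $\check{Z}^1(\{V_j\},\mathcal{C}_{\mathcal{F}})\to\check{Z}^1(\{V_j^*\},\mathcal{C}_{\mathcal{F}}^*)$; this gives the stated assertion and the surjectivity of the restriction map on cohomology. For injectivity, suppose $\{(V_{jk},h_{jk})\}\in\check{Z}^1(\{V_j\},\mathcal{C}_{\mathcal{F}})$ equals $\delta(\{(V_j^*,\phi_j)\})$ with $\phi_j\in\Gamma(V_j^*,\mathcal{C}_{\mathcal{F}}^*)$; comparing principal parts in $w_j$ shows the principal parts of the $\phi_j$ agree on overlaps, hence glue to a holomorphic function on $V^*$, which is constant by Lemma~\ref{lem:nononconstantholfunctionexistsonV} and, having only negative powers of $w_j$, therefore vanishes identically. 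Thus $\phi_j\in\Gamma(V_j,\mathcal{C}_{\mathcal{F}})$ and $\{(V_{jk},h_{jk})\}\in\check{B}^1(\{V_j\},\mathcal{C}_{\mathcal{F}})$.

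For $(ii)$: I mimic Lemma~\ref{lemma:for_non_hausdorff_case}. The defining property of Case I\!I\!I together with Proposition~\ref{prop:ueda_lemma} yields a strictly increasing sequence $\{n_\nu\}$ with $K_{\rm const}(N_{Y/X}^{n_\nu})^{1/n_\nu}\to\infty$. For each $\nu$ choose, by the definition of $K_{\rm const}$, a near-extremal $0$-cochain whose coboundary $\{(U_{jk},g_{jk,-n_\nu})\}$ is a cocycle of $\mathbb{C}(N_{Y/X}^{n_\nu})$ normalized so that $\max_{j,k}|g_{jk,-n_\nu}|=1$ and $\max_j|f_{j,-n_\nu}|>\tfrac12K_{\rm const}(N_{Y/X}^{n_\nu})$. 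Put $\ve_\nu:=K_{\rm const}(N_{Y/X}^{n_\nu})^{-1/2}$, so that $\ve_\nu^{1/n_\nu}\to0$ while $\bigl(\ve_\nu K_{\rm const}(N_{Y/X}^{n_\nu})\bigr)^{1/n_\nu}=K_{\rm const}(N_{Y/X}^{n_\nu})^{1/(2n_\nu)}\to\infty$. Exactly as in Lemma~\ref{lemma:for_non_hausdorff_case}, set $g_{jk}^{(\lambda)}:=\sum_{\mu\in2^\lambda\mathbb{Z}_{>0}}\ve_\mu g_{jk,-n_\mu}w_j^{-n_\mu}$; since $\ve_\mu^{1/n_\mu}\to0$ the principal part is entire in $1/w_j$, so $\mathfrak{g}^{(\lambda)}:=\{(V_{jk}^*,g_{jk}^{(\lambda)})\}\in\check{Z}^1(\{V_j^*\},\mathcal{C}_{\mathcal{F}}^*)$. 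Because the partial sums of $\sum_{\mu\in2^\lambda\mathbb{Z}_{>0}}\ve_\mu f_{j,-n_\mu}w_j^{-n_\mu}$ are genuine sections in $\check{C}^0(\{V_j^*\},\mathcal{C}_{\mathcal{F}}^*)$ and their $\delta$-images converge to $\mathfrak{g}^{(\lambda)}$, the class of $\mathfrak{g}^{(\lambda)}$ in $\check{H}^1(\{V_j^*\},\mathcal{C}_{\mathcal{F}}^*)$, and hence its image in the cokernel, lies in $\overline{\{0\}}$. Finally, by the uniqueness of the formal primitive the primitive of $\sum_{\lambda=m}^{m+\mu}c_\lambda\mathfrak{g}^{(\lambda)}$ has coefficients $\widehat{c}_\nu\ve_\nu f_{j,-n_\nu}$ in the notation of Lemma~\ref{lemma:for_non_hausdorff_case}; if $c_m\ne0$, then along the infinitely many $\nu$ of $2$-adic valuation exactly $m$ one has $\widehat{c}_\nu=c_m$, and by the pigeonhole principle there is a fixed $j_0$ with $|\widehat{c}_\nu\ve_\nu f_{j_0,-n_\nu}|^{1/n_\nu}\ge\bigl(\tfrac{|c_m|}{2}\ve_\nu K_{\rm const}(N_{Y/X}^{n_\nu})\bigr)^{1/n_\nu}\to\infty$ for infinitely many such $\nu$, so $\sum_n\widehat{c}_\nu\ve_\nu f_{j_0,-n_\nu}X^{n_\nu}$ is not entire; by Lemma~\ref{lem:formal_solution_of_g_*version} the class $\sum_\lambda c_\lambda[\mathfrak{g}^{(\lambda)}]$ is not in the image of $\check{H}^1(\{V_j\},\mathcal{C}_{\mathcal{F}})$, so the classes $[\mathfrak{g}^{(\lambda)}]$, $\lambda\ge1$, are linearly independent in the cokernel. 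Thus the cokernel contains an infinite-dimensional subspace of $\overline{\{0\}}$, i.e. it is of non-Hausdorff type.

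I expect the main obstacle to be the choice of the weights $\ve_\nu$ in $(ii)$: they must decay fast enough (super-geometrically in $n_\nu$) that every $g_{jk}^{(\lambda)}$ is a legitimate cocycle of $\mathcal{C}_{\mathcal{F}}^*$, yet slowly enough that $\ve_\nu K_{\rm const}(N_{Y/X}^{n_\nu})$ still grows super-geometrically so that the primitive fails to converge on $V^*$; the requisite room is available precisely because Case I\!I\!I means that $K_{\rm const}(N_{Y/X}^{n})$ grows faster than any geometric sequence along a subsequence. A secondary point needing care is the injectivity step of $(i)$, where Lemma~\ref{lem:nononconstantholfunctionexistsonV} is used to annihilate the principal part of the glueing $0$-cochain.
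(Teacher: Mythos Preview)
Your argument is correct and follows the same strategy as the paper: for $(i)$ you invoke Lemma~\ref{lem:formal_solution_of_g_*version} together with Proposition~\ref{prop:ueda_lemma} to show the formal principal-part primitive is entire in $1/w_j$, and for $(ii)$ you build the cocycles $\mathfrak{g}^{(\lambda)}$ over the index sets $2^\lambda\mathbb{Z}_{>0}$ and argue by pigeonhole, exactly as in Lemmata~\ref{lemma:for_non_hausdorff_case} and~\ref{lem:constr_of_g_lambdas}. Two cosmetic differences are worth recording: for injectivity in $(i)$ you glue the principal parts of the $\phi_j$ to a global holomorphic function on $V^*$ and quote Lemma~\ref{lem:nononconstantholfunctionexistsonV}, whereas the paper (Lemma~\ref{lem:injectiveti_besweenH^1s_CF}) argues coefficient-by-coefficient via $H^0(Y,\mathbb{C}(N_{Y/V}^{-n}))=0$; and in $(ii)$ you normalize $\max_{j,k}|g_{jk,-n_\nu}|=1$ and introduce weights $\ve_\nu=K_{\rm const}(N_{Y/X}^{n_\nu})^{-1/2}$, while the paper fixes $\max_j|f_{j,-n}|=2^{-n}$ so that the bound $|g_{jk,-n_\nu}|<2(2R_\nu)^{-n_\nu}$ already forces entirety of the cocycle. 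Both normalizations achieve the same balance between convergence of the cocycle and divergence of the primitive.

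One small caution: the chart of Remark~\ref{rmk:niceVjs_whenCaseI_II} is only guaranteed in Cases I and I\!I (via Proposition~\ref{prop:hol_tub_nbhd_aru}), so your opening ``Throughout'' is inaccurate for part $(ii)$. This does no damage, however, since your construction in $(ii)$ uses only the functions $w_j$ with $w_j=t_{jk}w_k$ and the sheaf $\mathcal{C}_{\mathcal{F}}^*$, all of which are available in the general setup of \S\ref{section:kigou_settei}.
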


Here the topology of $\check{H}^1(\{V_j^*\}, \mathcal{C}_{\mathcal{F}}^*)$ is the one defined by the same manner as that of $\check{H}^1(\{V_j\}, \mathcal{C}_{\mathcal{F}})$. 
We regard the cokernel of the map $\check{H}^1(\{V_j\}, \mathcal{C}_{\mathcal{F}})\to \check{H}^1(\{V^*_j\}, \mathcal{C}_{\mathcal{F}}^*)$ as a topological vector space by using the quotient topology. 

\begin{proof}[Proof of Proposition \ref{prop:main_for_H^1_of_C_F_*version}]
Recall again that, by Proposition \ref{prop:hol_tub_nbhd_aru}, 
we can take the chart $\{(V_j, (z_j, w_j))\}$ as in Remark \ref{rmk:niceVjs_whenCaseI_II} in Case I and I\!I. 
Take $\{(V_{jk}^*, g_{jk})\}\in \check{Z}^1(\{V_j^*\}, \mathcal{C}_{\mathcal{F}}^*)$. Let 
\[
g_{jk}(w_j) = \sum_{n=-\infty}^\infty g_{jk, n}\cdot w_j^n
\]
be the Laurent expansion. 
By Lemma \ref{lem:formal_solution_of_g_*version}, it is sufficient to show that, for the formal power series 
\[
f_j(w_j) = \sum_{n=-\infty}^{-1} f_{j, n}\cdot w_j^n
\]
as in the lemma, the radius of convergence of the power series $\sum_{n=1}^{\infty} f_{j, -n}\cdot X^n$ is infinity, 
which follows from the estimate $|f_{j, -n}|\leq K_{\rm const}(N_{Y/X}^{n})\cdot \max_{j, k}|g_{jk, -n}|$ and Lemma \ref{prop:ueda_lemma} (Here we again use the assumption that we are in Case I or I\!I). 
Thus the assertion $(i)$ holds, since the the latter half is a simple consequence from the former half of the assertion and Lemma \ref{lem:injectiveti_besweenH^1s_CF} below. The assertion $(ii)$ follows from Lemma \ref{lem:constr_of_g_lambdas} below. 
\end{proof}

\begin{lemma}\label{lem:injectiveti_besweenH^1s_CF}
The natural map $\check{H}^1(\{V_j\}, \mathcal{C}_{\mathcal{F}})\to \check{H}^1(\{V^*_j\}, \mathcal{C}_{\mathcal{F}}^*)$ is injective. 
\end{lemma}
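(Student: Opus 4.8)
The plan is to establish injectivity directly: a cocycle on $V$ which becomes a coboundary on $V^*$ should already be a coboundary on $V$. So let $\mathfrak{g}=\{(V_{jk}, g_{jk})\}\in \check{Z}^1(\{V_j\}, \mathcal{C}_{\mathcal{F}})$ have trivial image in $\check{H}^1(\{V_j^*\}, \mathcal{C}_{\mathcal{F}}^*)$, and choose $\{(V_j^*, h_j)\}\in \check{C}^0(\{V_j^*\}, \mathcal{C}_{\mathcal{F}}^*)$ with $-h_j+h_k=g_{jk}$ on each $V_{jk}^*$. As in Lemma \ref{lem:formal_solution_of_g_*version}, I regard each $h_j$ as a one-variable holomorphic function of $w_j$ on the annulus $0<|w_j|<1$ and write its Laurent expansion $h_j(w_j)=\sum_{n\in\mathbb{Z}}h_{j,n}w_j^n$; likewise each $g_{jk}$, being a section of $\mathcal{C}_{\mathcal{F}}$ over $V_{jk}$ (which meets $Y$), is holomorphic on $|w_j|<1$, so $g_{jk}(w_j)=\sum_{n\geq 0}g_{jk,n}w_j^n$ has no principal part. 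The aim is to prove that each $h_j$ has no principal part either, so that $\{(V_j, h_j)\}$ defines an element of $\check{C}^0(\{V_j\}, \mathcal{C}_{\mathcal{F}})$.

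The key step is to compare the coefficients of $w_j^n$ for $n<0$ in the identity $-h_j+h_k=g_{jk}$, using $w_k=t_{jk}^{-1}w_j$. Since $g_{jk}$ has no negative part, this gives $h_{j,n}=t_{jk}^{-n}\cdot h_{k,n}$ on $U_{jk}$ for every $n<0$; in other words, for each $n<0$ the constants $\{(U_j, h_{j,n})\}$ form a global section of the locally constant sheaf $\mathbb{C}(N_{Y/X}^{-n})$, i.e. an element of $H^0(Y, \mathbb{C}(N_{Y/X}^{-n}))$. As $N_{Y/X}$ is a non-torsion element of ${\rm Pic}^0(Y)$ and $n\neq 0$, the bundle $N_{Y/X}^{-n}$ is non-trivial, hence $H^0(Y, \mathbb{C}(N_{Y/X}^{-n}))=0$ (the same vanishing is invoked in the proof of Lemma \ref{lem:ueda_type_estim_K_Kconst_explicitly}; cf. also Lemma \ref{lem:coh_of_ellipt_curve_OCF}). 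Therefore $h_{j,n}=0$ for all $j$ and all $n<0$, i.e. each $h_j$ extends holomorphically across $U_j$.

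With this, $\{(V_j, h_j)\}\in \check{C}^0(\{V_j\}, \mathcal{C}_{\mathcal{F}})$; and since the identity $-h_j+h_k=g_{jk}$ holds on the dense open subset $V_{jk}^*\subset V_{jk}$, it holds on all of $V_{jk}$ by continuity. Thus $\delta(\{(V_j, h_j)\})=\mathfrak{g}$, so $[\mathfrak{g}]=0$ in $\check{H}^1(\{V_j\}, \mathcal{C}_{\mathcal{F}})$, which is the asserted injectivity. Equivalently, one can phrase this through Lemma \ref{lem:formal_solution_of_g_*version}: the principal parts of the $h_j$ are exactly the formal solution attached to $\mathfrak{g}$ there, which vanishes identically because in the estimate $\max_j|f_{j,-n}|\leq K_{\rm const}(N_{Y/X}^n)\cdot \max_{j,k}|g_{jk,-n}|$ one has $g_{jk,-n}=0$ for all $n>0$.

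I do not anticipate a genuine obstacle here. The only delicate point is the legitimacy of expanding sections of $\mathcal{C}_{\mathcal{F}}$ and $\mathcal{C}_{\mathcal{F}}^*$ as one-variable Taylor, resp. Laurent, series in $w_j$ and comparing coefficients across $V_{jk}$, but this is precisely the framework set up in \S\ref{section:kigou_settei} and already used in Lemmata \ref{lem:formal_solution_of_g} and \ref{lem:formal_solution_of_g_*version}, so it may simply be invoked. The substantive input is the vanishing $H^0(Y, \mathbb{C}(N_{Y/X}^{-n}))=0$ for $n\neq 0$, which is exactly where non-torsionality of $N_{Y/X}$ enters.
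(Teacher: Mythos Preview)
Your argument is correct and is essentially identical to the paper's own proof: both expand the primitive on $V^*$ in a Laurent series, observe that the negative coefficients define global sections of $\mathbb{C}(N_{Y/X}^{-n})$ because $g_{jk}$ has no principal part, and conclude these vanish by non-torsionality. The only difference is notational (you write $h_j$ where the paper writes $f_j$) and your added remark tying this to Lemma \ref{lem:formal_solution_of_g_*version}, which is a helpful gloss but not in the original.
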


\begin{proof}
Take an element $\{(V_{jk}, g_{jk})\}\in\check{Z}^1(\{V_j\}, \mathcal{C}_{\mathcal{F}})$ such that 
$[\{(V_{jk}, g_{jk})\}]=0$ holds as elements of $\check{H}^1(\{V^*_j\}, \mathcal{C}_{\mathcal{F}}^*)$. 
Then there exists $\{(V_j, f_j)\}\in \check{C}^0(\{V^*_j\}, \mathcal{C}_{\mathcal{F}}^*)$ such that 
$\delta(\{(V_j, f_j)\})=\{(V_{jk}, g_{jk})\}$ holds. Let 
\[
f_j(w_j) = \sum_{n=-\infty}^\infty f_{j, n}\cdot w_j^n
\]
be the Laurent expansion. 
Then $\{(U_j, f_{j, n})\}$ patches to define an element of $H^0(Y, \mathbb{C}(N_{Y/V}^{-n}))$ for each $n$ negative, which is equal to $0$ since $N_{Y/V}$ is non-torsion. Therefore one has that $f_{j, n}=0$ holds for each $j$ and any negative integer $n$, from which the assertion follows. 
\end{proof}

\begin{lemma}\label{lem:constr_of_g_lambdas}
In Case I\!I\!I, 
there exist countably many elements 
$\mathfrak{g}^{(\lambda)}=\{(V_{jk}^*, g_{jk}^{(\lambda)})\}\in \check{Z}^1(\{V_j^*\}, \mathcal{C}_{\mathcal{F}}^*)$ such that 
$\{[\mathfrak{g}^{(\lambda)}]\}_{\lambda=1}^\infty$ are linearly independent elements of the closure $\overline{\{0\}}$ of $\{0\}$ in $\check{H}^1(\{V_j^*\}, \mathcal{C}_{\mathcal{F}}^*)$. 
Moreover, they satisfy the following condition: 
Let 
\[
f_j^{(\lambda)}(w_j) = \sum_{n=-\infty}^{-1} f_{j, n}^{(\lambda)}\cdot w_j^{n}
\]
be the formal power series as in Lemma \ref{lem:formal_solution_of_g_*version} such that the principal part of $-f_j^{(\lambda)}+f_k^{(\lambda)}-g_{jk}^{(\lambda)}$ is zero. 
Then it holds that $-f_j^{(\lambda)}+f_k^{(\lambda)}=g_{jk}^{(\lambda)}$ formally, 
$|f_{j, -n}^{(\lambda)}|\leq 2^{-n}$ holds for each $j, \lambda$, and $n>0$, 
and that, for each $\lambda$, there exists $j_\lambda$ such that 
\[
\# \{n\in \mathbb{Z}_{>0}\,\mid\,|f_{j_\lambda, -n}^{(\lambda)}|=2^{-n}\}=\infty
\]
holds. Especially, $f_{j}^{(\lambda)}$ is convergent on $\{1/2<|w_j|<1\}$ for each $j$, whereas $f_{j_\lambda}^{(\lambda)}$ is not convergent on $V_{j_\lambda}^*$. 
\end{lemma}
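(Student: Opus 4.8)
The plan is to carry out the $*$-analogue of the construction in the proof of Lemma \ref{lemma:for_non_hausdorff_case}, replacing Taylor tails in $w_j$ by Laurent tails in $1/w_j$ and inserting a normalization that makes the ``moreover'' assertions come out. First I would record that, in Case I\!I\!I, the defining condition together with Proposition \ref{prop:ueda_lemma} gives, for every $R>1$, infinitely many $n\in\mathbb{Z}_{>0}$ with $K_{\rm const}(N_{Y/X}^n)>R^n$; taking $R=\nu$ yields a strictly increasing sequence $\{n_\nu\}_{\nu=1}^\infty$ with $K_{\rm const}(N_{Y/X}^{n_\nu})^{1/n_\nu}\to\infty$. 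For each $\nu$ I would choose, from the definition of $K_{\rm const}(N_{Y/X}^{n_\nu})$, a $0$-cochain realizing the defining supremum up to a factor $2$ and rescale it to obtain constants $\{f_{j,-n_\nu}\}_j$ with $\max_j|f_{j,-n_\nu}|=2^{-n_\nu}$ such that, putting $g_{jk,-n_\nu}:=-f_{j,-n_\nu}+t_{jk}^{n_\nu}f_{k,-n_\nu}$, one has $\max_{j,k}|g_{jk,-n_\nu}|<2\cdot 2^{-n_\nu}/K_{\rm const}(N_{Y/X}^{n_\nu})$; hence $\max_{j,k}|g_{jk,-n_\nu}|^{1/n_\nu}\to 0$, i.e.\ the $g_{jk,-n_\nu}$ decay faster than every geometric sequence.

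Next, exactly as in Lemma \ref{lemma:for_non_hausdorff_case}, for each $\lambda\in\mathbb{Z}_{>0}$ I set
\[
g_{jk}^{(\lambda)}(w_j):=\sum_{\mu\in 2^\lambda\cdot\mathbb{Z}_{>0}}g_{jk,-n_\mu}\cdot w_j^{-n_\mu},\qquad
f_j^{(\lambda)}(w_j):=\sum_{\mu\in 2^\lambda\cdot\mathbb{Z}_{>0}}f_{j,-n_\mu}\cdot w_j^{-n_\mu}.
\]
The super-geometric decay forces $g_{jk}^{(\lambda)}$ to converge on $\{0<|w_j|<\infty\}\supset V_{jk}^*$, so $\mathfrak{g}^{(\lambda)}:=\{(V_{jk}^*,g_{jk}^{(\lambda)})\}$ is a genuine \v{C}ech cochain; the partial sums of the $f_j^{(\lambda)}$ give $0$-cochains in $\check{C}^0(\{V_j^*\},\mathcal{C}_{\mathcal{F}}^*)$ whose coboundaries converge to $\mathfrak{g}^{(\lambda)}$ in $\check{Z}^1(\{V_j^*\},\mathcal{C}_{\mathcal{F}}^*)$, which shows at once that $\mathfrak{g}^{(\lambda)}$ is a cocycle and that $[\mathfrak{g}^{(\lambda)}]$ lies in $\overline{\{0\}}$. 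Since each $f_j^{(\lambda)}$ consists only of negative powers and satisfies $-f_j^{(\lambda)}+f_k^{(\lambda)}=g_{jk}^{(\lambda)}$ identically, the uniqueness part of Lemma \ref{lem:formal_solution_of_g_*version} identifies it with the formal primitive in the statement; the inequality $|f_{j,-n}^{(\lambda)}|\le 2^{-n}$ is then immediate from $\max_j|f_{j,-n_\mu}|=2^{-n_\mu}$, and a pigeonhole over the finite index set $\{j\}$ (applied to the infinitely many $\mu\in 2^\lambda\mathbb{Z}_{>0}$) produces a $j_\lambda$ with $|f_{j_\lambda,-n_\mu}^{(\lambda)}|=2^{-n_\mu}$ for infinitely many $\mu$. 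The convergence of $f_j^{(\lambda)}$ on $\{1/2<|w_j|<1\}$ and the divergence of $f_{j_\lambda}^{(\lambda)}$ on $V_{j_\lambda}^*$ then follow from Cauchy--Hadamard, since $\limsup_n|f_{j,-n}^{(\lambda)}|^{1/n}\le\tfrac12$ always and equals $\tfrac12$ for $j=j_\lambda$.

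It remains to prove that $\{[\mathfrak{g}^{(\lambda)}]\}_{\lambda=1}^\infty$ is linearly independent, which I would do by contradiction along the lines of the last part of the proof of Lemma \ref{lemma:for_non_hausdorff_case}: a nontrivial relation $\sum_{\lambda=m}^{m+\mu}c_\lambda\mathfrak{g}^{(\lambda)}=\delta(\{(V_j^*,h_j)\})$ with $c_m\neq 0$ and $h_j\in\Gamma(V_j^*,\mathcal{C}_{\mathcal{F}}^*)$ forces, after Laurent-expanding the $h_j$ in $w_j$ and using that $N_{Y/V}$ is non-torsion (so $H^0(Y,\mathbb{C}(N_{Y/V}^n))=0$ for $n\neq 0$), the coefficient of $w_j^{-n_\nu}$ in $h_j$ to equal $\hat c_\nu f_{j,-n_\nu}$, where $\hat c_\nu=c_m$ for $\nu\in(2^m\mathbb{Z}_{>0})\setminus(2^{m+1}\mathbb{Z}_{>0})$; but holomorphy of $h_j$ on the whole punctured disc $\{0<|w_j|<1\}$ demands $\limsup_\nu|\hat c_\nu f_{j,-n_\nu}|^{1/n_\nu}=0$, contradicting that, for an index $j$ chosen by pigeonhole, $|c_m f_{j,-n_\nu}|=|c_m|\,2^{-n_\nu}$ for infinitely many such $\nu$. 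The key point---and the one genuine obstacle compared with the $V$-side in Case I\!I---is that each $g_{jk}^{(\lambda)}$ must extend holomorphically across the entire annulus $V_{jk}^*$, not merely on $\{|w_j|>r\}$ for some $r<1$; this is exactly what requires $K_{\rm const}(N_{Y/X}^n)$ to grow faster than every geometric sequence, a feature available only in Case I\!I\!I, and it is the structural reason why the cokernel in Proposition \ref{prop:main_for_H^1_of_C_F_*version}\,(ii) fails to be Hausdorff. The rest is bookkeeping to make the explicit constants ($2^{-n}$, the single $j_\lambda$) come out as stated.
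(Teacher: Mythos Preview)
Your proposal is correct and follows essentially the same route as the paper's proof. The paper chooses an increasing sequence $R_\nu\to\infty$ and $n_\nu$ with $K_{\rm const}(N_{Y/X}^{n_\nu})>R_\nu^{\,n_\nu}$, normalizes $\max_j|f_{j,-n}|=2^{-n}$, defines $g_{jk}^{(\lambda)}$ by the same lacunary Laurent sum, and then simply writes ``the assertion can be shown in the same manner as in the proof of Lemma~\ref{lemma:for_non_hausdorff_case}''; you have spelled out exactly that deferred argument, including the pigeonhole producing $j_\lambda$ and the adaptation of the linear-independence step (where the contradiction is now with the requirement that the principal part of $h_j$ converge on the whole punctured disc, i.e.\ have infinite radius in $1/w_j$, rather than radius $\ge 1$ as in the $V$-side case).
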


\begin{proof}
Fix an increasing sequence $\{R_n\}_{n=1}^\infty$ of positive real numbers so that $\lim_{n\to \infty}R_n=\infty$ holds. 
In Case I\!I\!I, it follows from Proposition \ref{prop:ueda_lemma} that 
one can take a strictly increasing sequence $\{n_\nu\}_{\nu=1}^\infty$ of positive integers such that \[
K_{\rm const}(N_{Y/X}^{n_\nu}) > R_\nu^{n_\nu}
\]
holds for each $\nu$. 
By definition of $K_{\rm const}(N_{Y/X}^{n})$, for each $n$, there exist constants $f_{j, -n}$ such that 
\[
\max_j|f_{j, -n}|> \frac{1}{2} K_{\rm const}(N_{Y/X}^{n})\cdot \max_{j, k}|g_{jk, -n}|
\]
holds for $g_{jk, -n}:=-f_{j, -n}+t_{jk}^n\cdot f_{k, -n}$. 
By multiplying a constant, we may assume that 
\[
\max_j|f_{j, -n}|=\frac{1}{2^n}
\]
holds for each $n$. Note that 
\[
\max_{j, k}|g_{jk, -n_\nu}| < \frac{2}{K_{\rm const}(N_{Y/X}^{n_\nu})}\cdot \frac{1}{2^{n_\nu}} < 2\cdot \left(\frac{1}{2R_\nu}\right)^{n_\nu}
\]
holds for each $\nu$. For each positive integer $\lambda$, set 
\[
g_{jk}^{(\lambda)}:=\sum_{\mu\in 2^\lambda\cdot \mathbb{Z}_{>0}}^\infty g_{jk, -n_\mu}\cdot \frac{1}{w_j^{n_\mu}}. 
\]
Then the assertion can be shown 
in the same manner as in the proof of Lemma \ref{lemma:for_non_hausdorff_case}. 
\end{proof}

\section{$\delbar$ cohomologies of $V$ and $V^*$}

We use the notation in \S \ref{section:kigou_settei}. 
In this section, we show the following proposition by applying the results in \S 3 and 4. 
\begin{proposition}\label{prop:section5_main}
For a suitable choice of $V$, the following holds: \\
$(i)$ In Case I, both $H^1(V, \mathcal{O}_V)$ and $H^1(V^*, \mathcal{O}_{V^*})$ are vector space of dimension $1$ and the natural map 
$H^1(V, \mathcal{O}_V)\to H^1(V^*, \mathcal{O}_{V^*})$ is a linear bijection. \\
$(ii)$ In Case I\!I, both $H^1(V, \mathcal{O}_V)$ and $H^1(Y, \mathcal{O}_Y)$ are of non-Hausdorff type, and the natural map $H^1(V, \mathcal{O}_{V})\to H^1(V^*, \mathcal{O}_{V^*})$ is a linear bijection. \\
$(iii)$ Case I\!I\!I, both 
$H^1(V, \mathcal{O}_V)$ and $H^1(V^*, \mathcal{O}_{V^*})$ 
are of non-Hausdorff type. 
\end{proposition}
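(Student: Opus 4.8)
The plan is to deduce all three cases from the exact sequences (\ref{eq:main_exact_seq}) and (\ref{eq:main_exact_seq_*version}), combined with the computations of $\check H^1(\{V_j\},\mathcal{C}_{\mathcal{F}})$ and $\check H^1(\{V_j^*\},\mathcal{C}_{\mathcal{F}}^*)$ from \S 4 and the determination of $H^0(V,\mathcal{O}_V/\mathcal{C}_{\mathcal{F}})$ and $H^0(V^*,\mathcal{O}_{V^*}/\mathcal{C}_{\mathcal{F}}^*)$ from \S 3. First I would observe that, since (\ref{eq:main_exact_seq}) reads $0\to H^0(V,\mathcal{O}_V/\mathcal{C}_{\mathcal{F}})\to\check H^1(\{V_j\},\mathcal{C}_{\mathcal{F}})\to H^1(V,\mathcal{O}_V)$ with the first term one-dimensional (Proposition \ref{prop:H0_of_O/CF}), there is a continuous injection of $\check H^1(\{V_j\},\mathcal{C}_{\mathcal{F}})/(\text{line})$ into $H^1(V,\mathcal{O}_V)$; the same holds for the starred version via (\ref{eq:main_exact_seq_*version}) and Proposition \ref{prop:H0_of_O/CF_*version}. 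To get the reverse estimate and pin down $H^1(V,\mathcal{O}_V)$ exactly, I would use that by \v Cech–Dolbeault (Lemma \ref{lem:comparizon_top_nonhausdoeff_CDcorresp}) and a Leray argument one may compute $H^1(V,\mathcal{O}_V)$ and $H^1(V^*,\mathcal{O}_{V^*})$ from $\check H^1(\{V_j\},\mathcal{O}_V)$ and $\check H^1(\{V_j^*\},\mathcal{O}_{V^*})$; in Cases I and I\!I we have a holomorphic tubular neighborhood (Proposition \ref{prop:hol_tub_nbhd_aru}), so the filtration of $\mathcal{O}_V$ by powers of $w_j$ gives, exactly as in Remark \ref{rmk:keisikikai_C_F_or_holtubnbhd}, a coefficientwise decomposition into cocycles with values in $\mathcal{O}_Y(N_{Y/V}^{-n})$, and Proposition \ref{prop:ueda_lemma} controls convergence just as in Lemma \ref{lem:formal_solution_of_g}.

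For Case I, the key input is Proposition \ref{prop:main_for_H^1_of_C_F}(i): $\check H^1(\{V_j\},\mathcal{C}_{\mathcal{F}})\cong\check H^1(\{V_j\},\mathbb{C})$, which is finite dimensional; running the same Ueda-type convergence argument at the level of $\mathcal{O}_V$ (the radii of convergence of the formal primitives are $\ge 1$ because $K_{\rm const}(N_{Y/V}^{-n})$ grows subexponentially) shows $\check H^1(\{V_j\},\mathcal{O}_V)$ is also finite dimensional, and more precisely the inclusion $\check H^1(\{V_j\},\mathcal{C}_{\mathcal{F}})\to\check H^1(\{V_j\},\mathcal{O}_V)$ is onto; comparing with (\ref{eq:main_exact_seq}) and using $H^0(V,\mathcal{O}_V/\mathcal{C}_{\mathcal{F}})\cong\mathbb{C}$ together with $\check H^1(\{V_j\},\mathbb{C})\cong H^1(Y,\mathbb{C})\cong\mathbb{C}^2$ (and the fact that the class of $\{-z_j+z_k\}$ is the nonzero image of $H^0(V,\mathcal{O}_V/\mathcal{C}_{\mathcal{F}})$, so it must be killed, while the other generator of $\check H^1(\{V_j\},\mathbb{C})$ survives) yields $\dim H^1(V,\mathcal{O}_V)=1$. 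The same reasoning with the starred sheaves, using Proposition \ref{prop:main_for_H^1_of_C_F_*version}(i) (the restriction $\check H^1(\{V_j\},\mathcal{C}_{\mathcal{F}})\to\check H^1(\{V_j^*\},\mathcal{C}_{\mathcal{F}}^*)$ is an isomorphism) and Lemma \ref{lem:injectiveti_besweenH^1s_CF}, gives $\dim H^1(V^*,\mathcal{O}_{V^*})=1$ and that $H^1(V,\mathcal{O}_V)\to H^1(V^*,\mathcal{O}_{V^*})$ is bijective; one compares the two exact sequences (\ref{eq:main_exact_seq}), (\ref{eq:main_exact_seq_*version}) via the restriction map, which is compatible by construction.

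For Cases I\!I and I\!I\!I, the non-Hausdorff statements follow by transporting the non-Hausdorffness established in \S 4 through the exact sequences. In Case I\!I we still have a holomorphic tubular neighborhood, so $H^1(V,\mathcal{O}_V)$ is computed coefficientwise exactly as $\check H^1(\{V_j\},\mathcal{C}_{\mathcal{F}})$ (the extra $\mathbb{C}$ from $H^0(V,\mathcal{O}_V/\mathcal{C}_{\mathcal{F}})$ does not affect Hausdorffness or infinite-dimensionality), so Lemma \ref{lemma:for_non_hausdorff_case} gives countably many independent elements of $\overline{\{0\}}$ in $H^1(V,\mathcal{O}_V)$; since $\varinjlim_W H^1(W,\mathcal{O}_W)\cong H^1(Y,\mathcal{O}_Y)$ by Lemma \ref{lem:case2_kinou_lim_zero} and this identification is induced by restriction, $H^1(Y,\mathcal{O}_Y)$ is likewise of non-Hausdorff type, and the isomorphism $H^1(V,\mathcal{O}_V)\to H^1(V^*,\mathcal{O}_{V^*})$ comes from Proposition \ref{prop:main_for_H^1_of_C_F_*version}(i) at the level of $\mathcal{C}_{\mathcal{F}}$ together with the coefficientwise comparison of $\mathcal{O}_V$- and $\mathcal{O}_{V^*}$-cohomology. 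In Case I\!I\!I there is no tubular neighborhood, so one argues directly: the classes $\mathfrak g^{(\lambda)}$ of Lemma \ref{lemma:for_non_hausdorff_case} already lie in $\check Z^1(\{V_j\},\mathcal{C}_{\mathcal{F}})\subset\check Z^1(\{V_j\},\mathcal{O}_V)$ (sections of $\mathcal{C}_{\mathcal{F}}$ are holomorphic) and represent elements of $\overline{\{0\}}$ in $H^1(V,\mathcal{O}_V)$; the point is to check they remain \emph{linearly independent} in $H^1(V,\mathcal{O}_V)$, i.e.\ that a nontrivial combination cannot become an $\mathcal{O}_V$-coboundary — here one uses that a holomorphic $0$-cochain solving the $\delta$-equation would, by the uniqueness of the formal primitive along $\mathcal{F}$ (Lemma \ref{lem:formal_solution_of_g}) combined with expansion in $z_j$ as well as $w_j$, force the leafwise-constant formal primitive to converge on radius $\ge 1$, contradicting the construction; for $H^1(V^*,\mathcal{O}_{V^*})$ one argues identically with the classes of Lemma \ref{lem:constr_of_g_lambdas}, or more efficiently derives the non-Hausdorffness of $H^1(V^*,\mathcal{O}_{V^*})$ from that of the cokernel of $H^1(V,\mathcal{O}_V)\to H^1(V^*,\mathcal{O}_{V^*})$ via Proposition \ref{prop:main_for_H^1_of_C_F_*version}(ii) and a diagram chase on (\ref{eq:main_exact_seq}), (\ref{eq:main_exact_seq_*version}).

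The main obstacle I anticipate is the passage from cohomology of $\mathcal{C}_{\mathcal{F}}$ to cohomology of $\mathcal{O}_V$ in Case I\!I\!I, where no holomorphic tubular neighborhood is available: the $z_j$-dependence of cocycles genuinely enters (cf.\ Remark \ref{rmk:keisikikai_C_F_or_holtubnbhd}, where $\partial z_j/\partial w_k\not\equiv 0$), so one cannot simply project onto $w_j$-coefficients, and the argument that the $\mathfrak g^{(\lambda)}$ stay independent modulo $\mathcal{O}_V$-coboundaries needs the sheaf sequence $0\to\mathcal{C}_{\mathcal{F}}\to\mathcal{O}_V\to\mathcal{O}_V(K_V\otimes[-Y])\to 0$ and the injectivity in (\ref{eq:main_exact_seq}) to reduce the question back to $\check H^1(\{V_j\},\mathcal{C}_{\mathcal{F}})$, where Lemma \ref{lemma:for_non_hausdorff_case} applies. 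Making that reduction precise — in particular verifying that the closure of $\{0\}$ in $H^1(V,\mathcal{O}_V)$ pulls back to (or contains the image of) the closure of $\{0\}$ in $\check H^1(\{V_j\},\mathcal{C}_{\mathcal{F}})$ under a continuous map with finite-dimensional, hence closed, complementary behavior — is where the care is needed; everything else is a bookkeeping assembly of results already in \S\S 3–4.
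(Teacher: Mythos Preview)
Your overall strategy matches the paper's, and in particular the argument you flag as the ``main obstacle'' in your final paragraph is in fact the resolution: the paper isolates exactly this as Lemma~\ref{lem:K_1infin_then_K2infin}. The natural map $i\colon\check H^1(\{V_j\},\mathcal{C}_{\mathcal{F}})\to\check H^1(\{V_j\},\mathcal{O}_V)$ is continuous, so $i^{-1}(\overline{\{0\}})$ is closed and hence contains $K_1:=\overline{\{0\}}$; since $\ker i$ has dimension $\le 1$ by (\ref{eq:main_exact_seq}) and Proposition~\ref{prop:H0_of_O/CF}, $i(K_1)$ is infinite-dimensional whenever $K_1$ is. That single observation, plus Lemma~\ref{lem:comparizon_top_nonhausdoeff_CDcorresp}, disposes of the non-Hausdorff claims in Cases~I\!I and~I\!I\!I for both $V$ and $V^*$ without any need to ``expand in $z_j$ as well as $w_j$'' or to verify independence modulo $\mathcal{O}_V$-coboundaries by hand. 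So you have the right idea; you just have not noticed that it is already a complete argument.

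There is one genuine error. In (ii) the statement contains a typo: ``$H^1(Y,\mathcal{O}_Y)$'' should read ``$H^1(V^*,\mathcal{O}_{V^*})$'' (compare Table~\ref{table:H^1_for_each_of_the_cases} and the paper's own proof of (ii)). You took the statement at face value and attempted to deduce that $H^1(Y,\mathcal{O}_Y)$ is of non-Hausdorff type from $\varinjlim_W H^1(W,\mathcal{O}_W)\cong H^1(Y,\mathcal{O}_Y)$. That cannot work: $H^1(Y,\mathcal{O}_Y)\cong\mathbb{C}$ is one-dimensional and Hausdorff, and a direct-limit isomorphism does not transport non-Hausdorffness. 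The correct target is $H^1(V^*,\mathcal{O}_{V^*})$, and its non-Hausdorffness follows either from Lemma~\ref{lem:K_1infin_then_K2infin} for $V^*$, or (as the paper does) from the fact that the bijection $H^1(V,\mathcal{O}_V)\to H^1(V^*,\mathcal{O}_{V^*})$ is continuous and the source is non-Hausdorff.

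One minor comparison for Case~I: rather than counting through $\check H^1(\{V_j\},\mathbb{C})\cong\mathbb{C}^2$ and tracking which generator dies, the paper more directly runs the Ueda-type convergence argument on $\mathcal{O}_V$-cocycles (exactly as you outline) to show that $p^*\colon H^1(Y,\mathcal{O}_Y)\to H^1(V,\mathcal{O}_V)$ is surjective, hence bijective since $i^*\circ p^*=\mathrm{id}$; for $V^*$ the same argument gives $\dim\le 1$, while the lower bound $\ge 1$ comes from (\ref{eq:main_exact_seq_*version}) together with Propositions~\ref{prop:H0_of_O/CF_*version}, \ref{prop:main_for_H^1_of_C_F}, and \ref{prop:main_for_H^1_of_C_F_*version}. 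Your route via $\check H^1(\{V_j\},\mathbb{C})$ is valid but slightly longer.
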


\begin{table}[h]
 \centering
  \begin{tabular}{c|ccc}
    & $H^1(V, \mathcal{O}_V)$ & $H^1(V^*, \mathcal{O}_{V^*})$ & $H^1(V, \mathcal{O}_V)\to H^1(V^*, \mathcal{O}_{V^*})$ \\
   \hline 
   Case I & $\cong \mathbb{C}$ & $\cong \mathbb{C}$ & bijective \\
   Case I\!I & non-H type & non-H type & bijective \\
   Case I\!I\!I & non-H type & non-H type & ?
  \end{tabular}
\vskip1mm
 \caption{Summary of the assertions of Proposition \ref{prop:section5_main}, non-H type means that it is of non-Hausdorff type. }
 \label{table:H^1_for_each_of_the_cases}
\end{table}

\subsection{Case I}

First let us investigate in Case I. 
Here we use the chart $\{(V_j, (z_j, w_j))\}$ as in Remark \ref{rmk:niceVjs_whenCaseI_II} (Recall Proposition \ref{prop:hol_tub_nbhd_aru}). 

\begin{proof}[Proof of Proposition \ref{prop:section5_main} $(i)$]
Take an element $\{(V_{jk}, g_{jk})\}\in \check{Z}^1(\{V_{jk}\}, \mathcal{O}_V)$. Let 
\[
g_{jk}(z_j, w_j) = \sum_{n=0}^\infty g_{jk, n}(z_j)\cdot w_j^n
\]
be the Taylor expansion of each $g_{jk}$ in the variable $w_j$. 
Then, by using Lemmata \ref{lem:ueda_type_estim_K_Kconst_explicitly} and \ref{lem:coh_of_ellipt_curve_OCF}, 
one can run the same argument as in the proof of Lemma \ref{lem:formal_solution_of_g} to obtain the solution $\{(U_j, f_{j, n})\}\in \check{C}^0(\{U_j\}, \mathcal{O}_Y(N_{Y/X}^{-n})$ of the equation $\delta(\{(U_j, f_{j, n})\})=\{(U_{jk}, g_{jk, n})\}\in \check{Z}^1(\{U_j\}, \mathcal{O}_Y(N_{Y/X}^{-n}))$ which satisfies the estimate
\[
\max_j\sup_{U_j}|f_{j, n}|\leq K(N_{Y/X}^{-n})\cdot \max_{j, k}\sup_{U_{jk}}|g_{jk, n}|
\]
for each $n>0$ (Note that here we used the fact that $V$ is a holomorphic tubular neighborhood, see Remark \ref{rmk:keisikikai_C_F_or_holtubnbhd}). 
As it follows from Proposition \ref{prop:ueda_lemma} and the estimate of $|g_{jk}(z_j)|$'s by Cauchy inequality that
\[
f_j(z_j, w_j) := \sum_{n=1}^\infty f_{j, n}(z_j)\cdot w_j^n, 
\]
which satisfies $-f_j+f_k=g_{jk}$ at least formally by construction, is convergent on $V_j$. 
Therefore one has that $p^*\colon H^1(Y, \mathcal{O}_Y)\to H^1(V, \mathcal{O}_V)$ is surjective, where $p\colon V\to Y$ is the holomorphic retraction which corresponds to the projection $N_{Y/V}\to Y$. 
As the injectivity is clear since $i^*\circ p^*\colon H^1(Y, \mathcal{O}_Y)\to H^1(Y, \mathcal{O}_Y)$ is the identity, where $i\colon Y\to V$ is the inclusion, $p^*$ is a linear bijection. 
Thus it follows that $H^1(V, \mathcal{O}_V)\cong\mathbb{C}$. 

From the same argument, it follows that $p^*\colon H^1(Y, \mathcal{O}_Y)\to H^1(V^*, \mathcal{O}_{V^*})$ is also surjective. 
Especially one has that ${\rm dim}\,H^1(V^*, \mathcal{O}_{V^*})\leq 1$. 
By Propositions \ref{prop:main_for_H^1_of_C_F} and \ref{prop:main_for_H^1_of_C_F_*version}, 
the natural maps $\check{H}^1(\{V_j\}, \mathbb{C})\to \check{H}^1(\{V_j\}, \mathcal{C}_{\mathcal{F}})$ and $\check{H}^1(\{V_j\}, \mathcal{C}_{\mathcal{F}})\to \check{H}^1(\{V^*_j\}, \mathcal{C}_{\mathcal{F}}^*)$ are linear bijections. 
Thus it follows from Proposition \ref{prop:H0_of_O/CF_*version} and the exact sequence (\ref{eq:main_exact_seq_*version}) that ${\rm dim}\,H^1(V^*, \mathcal{O}_{V^*})\geq 1$. 
Therefore $p^*\colon H^1(Y, \mathcal{O}_Y)\to H^1(V^*, \mathcal{O}_{V^*})$ is a surjective linear map between $1$-dimensional vector spaces, from which the assertion follows. 
\end{proof}

\subsection{Case I\!I}

Next let us investigate in Case I\!I. 
As a preparation, first let us show the following: 
\begin{lemma}\label{lem:K_1infin_then_K2infin}
The following holds: \\
$(i)$ Denote by $K_1$ the closure of $\{0\}$ in $\check{H}^1(\{V_j\}, \mathcal{C}_{\mathcal{F}})$, 
and by $K_2$ the closure of $\{0\}$ in $\check{H}^1(\{V_j\}, \mathcal{O}_V)$. 
If $K_1$ is of infinite dimension, then so is $K_2$. \\
$(ii)$ Denote by $K_1^*$ the closure of $\{0\}$ in $\check{H}^1(\{V_j^*\}, \mathcal{C}_{\mathcal{F}}^*)$, 
and by $K_2^*$ the closure of $\{0\}$ in $\check{H}^1(\{V_j^*\}, \mathcal{O}_{V^*})$. 
If $K_1^*$ is of infinite dimension, then so is $K_2^*$. 
\end{lemma}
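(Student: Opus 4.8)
The plan is to push the infinite-dimensionality of $K_1$ forward to $K_2$ along the map $\psi\colon \check{H}^1(\{V_j\}, \mathcal{C}_{\mathcal{F}})\to \check{H}^1(\{V_j\}, \mathcal{O}_V)$ induced by the sheaf inclusion $\mathcal{C}_{\mathcal{F}}\hookrightarrow \mathcal{O}_V$. At the cochain level $\psi$ is realized by the inclusion $\check{C}^q(\{V_j\}, \mathcal{C}_{\mathcal{F}})\hookrightarrow \check{C}^q(\{V_j\}, \mathcal{O}_V)$, which is a topological embedding for the topologies of uniform convergence on compact sets (a leafwise-constant holomorphic function is in particular holomorphic, and the two topologies agree on the image). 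Since this cochain map sends cocycles to cocycles and coboundaries to coboundaries, it descends to a continuous linear map on the quotient spaces, and it carries $\overline{\check{B}^1(\{V_j\}, \mathcal{C}_{\mathcal{F}})}$ into $\overline{\check{B}^1(\{V_j\}, \mathcal{O}_V)}$; hence $\psi(K_1)\subseteq K_2$.

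The next step is to bound $\dim\ker\psi$. A class $[\mathfrak{g}]$ lies in $\ker\psi$ precisely when $\mathfrak{g}=\delta(\mathfrak{f})$ for some $\mathfrak{f}\in \check{C}^0(\{V_j\}, \mathcal{O}_V)$; then $\mathfrak{f}$ reduced modulo $\mathcal{C}_{\mathcal{F}}$ is a $0$-cocycle, hence an element of $H^0(V, \mathcal{O}_V/\mathcal{C}_{\mathcal{F}})$, and using Lemma \ref{lem:nononconstantholfunctionexistsonV} to see that this does not depend on the choice of $\mathfrak{f}$ or of the representative $\mathfrak{g}$, this assignment is a well-defined injective linear map $\ker\psi\hookrightarrow H^0(V, \mathcal{O}_V/\mathcal{C}_{\mathcal{F}})$ (the injection underlying the exactness of $(\ref{eq:main_exact_seq})$; here only its injectivity is needed). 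By Proposition \ref{prop:H0_of_O/CF} $(ii)$ the target is $1$-dimensional, so $\dim\ker\psi\leq 1$.

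Combining these, the restriction $\psi|_{K_1}\colon K_1\to K_2$ is a linear map whose kernel $K_1\cap\ker\psi$ has dimension at most $1$, whence $\dim K_2\geq \dim\psi(K_1)= \dim K_1-\dim(K_1\cap\ker\psi)\geq \dim K_1-1$. If $\dim K_1=\infty$ this forces $\dim K_2=\infty$, giving $(i)$. Assertion $(ii)$ follows by the identical argument with $\{V_j^*\}$, $\mathcal{C}_{\mathcal{F}}^*$, $\mathcal{O}_{V^*}$ replacing $\{V_j\}$, $\mathcal{C}_{\mathcal{F}}$, $\mathcal{O}_V$, using the exact sequence $(\ref{eq:main_exact_seq_*version})$ and Proposition \ref{prop:H0_of_O/CF_*version} (together with Lemma \ref{lem:nononconstantholfunctionexistsonV}, which also yields $H^0(V^*, \mathcal{O}_{V^*})\cong\mathbb{C}$).

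The statement is soft, so I do not expect a genuine obstacle; the one point demanding a little care is the topological bookkeeping --- that $\psi$ is continuous for the quotient topologies and therefore sends the closure of $\{0\}$ into the closure of $\{0\}$ --- which rests only on the elementary fact that the cochain inclusion $\check{C}^q(\{V_j\}, \mathcal{C}_{\mathcal{F}})\hookrightarrow \check{C}^q(\{V_j\}, \mathcal{O}_V)$ is a topological embedding. Everything else is formal, drawing on the exact sequences $(\ref{eq:main_exact_seq})$ and $(\ref{eq:main_exact_seq_*version})$ and the one-dimensionality results of \S\ref{section:z_ext_K=-Y}.
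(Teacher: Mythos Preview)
Your proof is correct and follows essentially the same approach as the paper's: both use continuity of the natural map $\psi$ (the paper calls it $i$) to get $\psi(K_1)\subset K_2$, then invoke the exact sequence (\ref{eq:main_exact_seq}) together with Proposition \ref{prop:H0_of_O/CF} to bound $\dim\ker\psi\leq 1$, and conclude by the obvious dimension count. The only difference is expository --- you spell out why $\psi$ is continuous and re-derive the connecting injection $\ker\psi\hookrightarrow H^0(V,\mathcal{O}_V/\mathcal{C}_{\mathcal{F}})$ explicitly, whereas the paper simply cites (\ref{eq:main_exact_seq}) and (\ref{eq:main_exact_seq_*version}) directly.
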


\begin{proof}
As the natural maps $i\colon \check{H}^1(\{V_j\}, \mathcal{C}_{\mathcal{F}})\to \check{H}^1(\{V_j\}, \mathcal{O}_V)$ and
$j\colon \check{H}^1(\{V_j^*\}, \mathcal{C}_{\mathcal{F}}^*)\to \check{H}^1(\{V_j^*\}, \mathcal{O}_{V^*})$ 
are continuous, one has that both $i^{-1}(K_2)$ and $j^{-1}(K_2^*)$ are closed. 
Therefore $K_1\subset i^{-1}(K_2)$ and $K_1^*\subset j^{-1}(K_2^*)$ holds. 
As both of the kernels of the linear maps $i|_{K_1}\colon K_1\to K_2$ and $j|_{K_1^*}\colon K_1^*\to K_2^*$ are of finite dimension by the exact sequences (\ref{eq:main_exact_seq}) and (\ref{eq:main_exact_seq_*version}), and Propositions \ref{prop:H0_of_O/CF} and \ref{prop:H0_of_O/CF_*version}, the assertions hold. 
\end{proof}

By using the chart $\{(V_j, (z_j, w_j))\}$ as in Remark \ref{rmk:niceVjs_whenCaseI_II}, the assertion $(ii)$ of Proposition \ref{prop:section5_main} can be shown as follows: 
\begin{proof}[Proof of Proposition \ref{prop:section5_main} $(ii)$]
First let us show that $H^1(V, \mathcal{O}_V)$ is of non-Hausdorff type. 
From Lemmata \ref{lemma:for_non_hausdorff_case} and \ref{lem:K_1infin_then_K2infin}, 
it follows that the closure of $\{0\}$ in $\check{H}^1(\{V_j\}, \mathcal{O}_V)$ is of infinite dimension. 
Therefore the assertion follows from Lemma \ref{lem:comparizon_top_nonhausdoeff_CDcorresp}. 

Next let us show that the natural map $H^1(V, \mathcal{O}_{V})\to H^1(V^*, \mathcal{O}_{V^*})$ is a linear bijective. 
As the injectivity can be easily checked by the same argument as in the proof of 
Lemma \ref{lem:injectiveti_besweenH^1s_CF}, here we will show that it is surjective. 
Take an element $\{(V_{jk}^*, g_{jk})\}\in \check{Z}^1(\{V_{jk}^*\}, \mathcal{O}_{V^*})$. Let 
\[
g_{jk}(z_j, w_j) = \sum_{n=-\infty}^\infty g_{jk, n}(z_j)\cdot w_j^n
\]
be the Laurent expansion of each $g_{jk}$ in the variable $w_j$. 
Then, by using Lemmata \ref{lem:ueda_type_estim_K_Kconst_explicitly} and \ref{lem:coh_of_ellipt_curve_OCF}, 
one can run the same argument as in the proof of Lemma \ref{lem:formal_solution_of_g_*version} to obtain $\{(U_j, f_{j, -n})\}\in \check{C}^0(\{U_j\}, \mathcal{O}_Y(N_{Y/X}^{n}))$ such that 
\[
\max_j|f_{j, -n}|\leq K(N_{Y/X}^{n})\cdot \max_{j, k}|g_{jk, -n}|
\]
and that the formal power series 
\[
f_j(z_j, w_j) := \sum_{n=-\infty}^{-1} f_{j, n}(z_j)\cdot w_j^{n}
\]
satisfies that the principal part of the series $-f_j+f_k-g_{jk}$ is zero (Note that here we used the fact that $V$ is a holomorphic tubular neighborhood, see Remark \ref{rmk:keisikikai_C_F_or_holtubnbhd}). 
As it follows from Proposition \ref{prop:ueda_lemma} and the estimate of $|g_{jk}(z_j)|$'s by Cauchy inequality that each $f_j$ is convergent on $V_j$. 
Therefore $H^1(V, \mathcal{O}_{V})\to H^1(V^*, \mathcal{O}_{V^*})$ is a linear bijective. 

Finally, as $H^1(V, \mathcal{O}_{V})\to H^1(V^*, \mathcal{O}_{V^*})$ is a continuous injection from a space of non-Hausdorff type, the assertion follows. 
\end{proof}

\subsection{Case I\!I\!I}

\begin{proof}[Proof of Proposition \ref{prop:section5_main} $(iii)$]
From Lemmata \ref{lemma:for_non_hausdorff_case}, \ref{lem:constr_of_g_lambdas}, and \ref{lem:K_1infin_then_K2infin}, 
it follows that both the closures of $\{0\}$ in $\check{H}^1(\{V_j\}, \mathcal{O}_V)$ and $\check{H}^1(\{V_j^*\}, \mathcal{O}_{V^*})$ are of infinite dimension. 
Therefore the assertion follows from Lemma \ref{lem:comparizon_top_nonhausdoeff_CDcorresp}. 
\end{proof}

%

\section{Proof of Theorem \ref{thm:main}}

Let $\{M, V_1, V_2, \dots, V_N\}$ and $\mathcal{W}:=\{V_{\nu, j}^*\}_{\nu, j}\cup \{W_\alpha\}$ be open coverings of $X$ and $M$, respectively, as in 
\S \ref{section:prelim_case_non_tor_1compl_etc}. 
By Lemmata \ref{lem:first_lem_in_prelim} $(ii)$ and 
\ref{lem:nononconstantholfunctionexistsonV}, one obtains an exact sequence 
\[
0\to H^1(X, \mathcal{O}_X)\to H^1(M, \mathcal{O}_M)\oplus \bigoplus_{\nu=1}^NH^1(V_\nu, \mathcal{O}_{V_\nu})
\to \bigoplus_{\nu=1}^NH^1(V_\nu^*, \mathcal{O}_{V_\nu}^*) \to 0
\]
from the Mayer--Vietoris sequence for the open covering $\{M, V_1, V_2, \dots, V_N\}$. 

First let us consider the case where
\begin{description}
\item[{\bf (Condition)$_\nu$}] There exist positive numbers $\ve$ and $\delta$ such that, for any positive integer $n$, $d(\mathbb{I}_{Y_\nu}, N_{Y_\nu/X}^n)>\ve\cdot \delta^n$ holds. \qed
\end{description}
holds for any $\nu\in\{1, 2, \dots, N\}$. 
In this case, it follows from Proposition \ref{prop:section5_main} $(i)$, $(ii)$ that the map
$H^1(V_\nu, \mathcal{O}_{V_\nu})\to H^1(V_\nu^*, \mathcal{O}_{V_\nu}^*)$ is a linear bijection for each $\nu$ from which the assertion $(i)$ follows. 

Next let us consider the case where {\bf (Condition)$_\nu$} does not hold for some $\nu$. 
Without loss of generality, we may assume that {\bf (Condition)$_1$} does not hold. 
Take $\mathfrak{g}^{(\lambda)}=\{(V_{jk}^*, g_{jk}^{(\lambda)})\}$ and $\{(V_j^*, f_j^{(\lambda)})\}$ as in Lemma \ref{lem:constr_of_g_lambdas} for $V=V_1$ ($\lambda\in\mathbb{Z}_{>0}$). 
Set 
\[
\mathfrak{h}^{(\lambda)}:=\delta\left(\{(V_{1, j}^*, f_j^{(\lambda)})\}\cup \bigcup_{\nu=2}^N\{(V_{\nu, j}^*, 0)\}\cup \{(W_\alpha, 0)\}\right)
\]
formally. 
Then, as the component of $\mathfrak{h}^{(\lambda)}$ coincides with $g_{jk}^{(\lambda)}$ on each $V_{1, j}^*\cap V_{1, k}^*$, 
with $f_{j}^{(\lambda)}$ on each $V_{1, j}^*\cap W_{\alpha}$, which is convergent since $V_{1, j}^*\cap W_{\alpha}\subset \{1/2<|w_{1, j}|<1\}$, 
and with $0$ on the other types of the intersections of the elements of $\mathcal{W}$. 
Therefore one has that $\mathfrak{h}^{(\lambda)}\in \check{Z}^1(\mathcal{W}, \mathcal{O}_M)$. 
Note that 
\[
\lim_{\mu\to \infty}\delta\left(\{(V_{1, j}^*, f_j^{(\lambda, \mu)})\}\cup\bigcup_{\nu=2}^N\{(V_{\nu, j}^*, 0)\}\cup \{(W_\alpha, 0)\}\right) = \mathfrak{h}^{(\lambda)}
\]
holds for 
\[
f_j^{(\lambda, \mu)}(w_{1, j}) = \sum_{n=-\mu}^{-1}f_{j, n}^{(\lambda)}\cdot w_{1, j}^n, 
\]
from which one also has that $\mathfrak{h}^{(\lambda)}\in \overline{\check{B}^1(\mathcal{W}, \mathcal{O}_M)}$. 
Therefore it follows that $[\mathfrak{h}^{(\lambda)}]\in K$ holds for each $\lambda$, 
where $K$ is the closure of $\{0\}$ in $\check{H}^1(\mathcal{W}, \mathcal{O}_M)$. 

As is clear by construction, it holds that $i([\mathfrak{h}^{(\lambda)}])=[\mathfrak{g}^{(\lambda)}]$ for each $\lambda$, where $i\colon \check{H}^1(\mathcal{W}, \mathcal{O}_M)\to \check{H}^1(\{V_{1, j}^*\}, \mathcal{O}_{V_1^*})$ is the restriction map. 
As $\{i([\mathfrak{h}^{(\lambda)}])\}_{\lambda=1}^\infty$ is linearly independent in \linebreak$\check{H}^1(\{V_{1, j}^*\}, \mathcal{C}_{\mathcal{F}}^*)$, 
it follows from the exact sequence (\ref{eq:main_exact_seq_*version}) and Proposition \ref{prop:H0_of_O/CF_*version} that $\{i([\mathfrak{h}^{(\lambda)}])\}_{\lambda=1}^\infty$ spans an infinite dimensional subspace of $\check{H}^1(\{V_{1, j}^*\}, \mathcal{O}_{V_1^*})$. 
Therefore $\{[\mathfrak{h}^{(\lambda)}]\}_{\lambda=1}^\infty$ also spans an infinite dimensional subspace of $K$. 
Thus one has that $\check{H}^1(\mathcal{W}, \mathcal{O}_M)$ is of non-Hausdorff type, from which and Lemma \ref{lem:comparizon_top_nonhausdoeff_CDcorresp} the assertion follows. 
\qed

\section{Examples}

\subsection{Toroidal groups of dimension $2$}

Fix a complex number $\tau$ whose imaginary part ${\rm Im}\,\tau$ is positive. 
Denote by $C$ the elliptic curve $\mathbb{C}/\langle 1, \tau\rangle$. 
For a non-torsion element $F\in {\rm Pic}^0(C)$, 
let us consider the ruled surface $X:={\bf P}(\mathbb{I}_C\oplus F)$. 
As is mentioned in \S 1, $K_X^{-1}$ is semi-positive and $K_X^{-1}=[Y_1+Y_2]$ holds for $Y_1:={\bf P}(\mathbb{I}_C)$ and $Y_2:={\bf P}(F)$. 

Let $p, q\in [-1/2, 1/2)$ be real numbers such that the monodromy $\rho_F\colon\pi_1(C, *)\to {\rm U}(1)$ of the flat line bundle $F$ satisfies
\[
\rho_F(n+m\tau)=\exp(2\pi\sqrt{-1}(np+mq)) \quad (n, m\in\mathbb{Z}). 
\]
Then
\[
X \cong (\mathbb{C}\times \mathbb{P}^1)/\sim 
\]
holds, where the relation ``$\sim$'' is the one generated by 
\[
(z, \eta)\sim (z+\lambda, \rho_F(\lambda)\eta)\quad ((z, \eta)\in \mathbb{C}\times \mathbb{P}^1)
\]
for each $\lambda\in \langle 1, \tau\rangle$. 
Therefore, by considering the covering map 
\[
\mathbb{C}^2\ni (z, w)\mapsto \left(z,\ \exp(2\pi\sqrt{-1}\cdot w)\right)\in 
\mathbb{C}\times(\mathbb{C}\setminus\{0\}), 
\]
one has that 
\[
M \cong \mathbb{C}^2/\Lambda, \quad
\Lambda = \left\langle
\begin{pmatrix}
0 \\
1 \\
\end{pmatrix},\ 
\begin{pmatrix}
1 \\
p \\
\end{pmatrix},\ 
\begin{pmatrix}
\tau \\
q \\
\end{pmatrix}
\right\rangle
\]
holds for the complement $M:=X\setminus (Y_1\cup Y_2)$. 
Thus one has that $M$ is a toroidal group of dimension $2$. 

In this example, as is simply observed, the sufficient and necessary condition for $(i)$ in Theorem \ref{thm:main} is equivalent to the following condition: 
there exist positive numbers $\ve$ and $\delta$ such that, for any positive integer $n$, $\min\{\max\{|np-m_1|, |nq-m_2|\}\mid (m_1, m_2)\in\mathbb{Z}^2\}>\ve\cdot \delta^n$ holds. 
Thus $M$ is a toroidal theta group if this condition holds, and is a toroidal wild group if it does not hold (See \cite[2.2.2]{AK}). 
Our proof of Theorem \ref{thm:main} can be seen as an alternative proof of \cite[Theorem 4.3]{Ka} (see also \cite[Theorem 2.2.9]{AK}) for toroidal groups of dimension $2$. 

\subsection{Proof of Corollary \ref{cor:main} and the $\ddbar$-Lemma}\label{section:ex_of_blp2at9pts}

Let $X_\theta$ and $Y_\theta$ be as in Example \ref{eg:blP2at9pts_sp_by_KoTLS} for each $\theta\in \mathbb{R}$. 
As is mentioned in \S 1, it is shown that $K_{X_\theta}^{-1}$ is semi-positive for each $\theta\in \mathbb{R}$ \cite[Theorem 1.3]{KoTLS}. 
Note that there exists a configuration of nine points of $\mathbb{P}^2$ such that the anticanonical line bundle of the blow-up centered at them is not semi-positive (See \cite[Theorem 7.1, Remark 7.2]{Ko2017}). 
See also \cite{A}, \cite{U}, and \cite{B} for the study on the semi-positivity of the anticanonical line bundle of the blow-up of $\mathbb{P}^2$ at nine points. 

\begin{proof}[Proof of Corollary \ref{cor:main}]
As $X_\theta$ is a rational surface, $H^1(X_\theta, \mathcal{O}_{X_\theta})=0$ holds. 
Thus the assertion follows from Theorem \ref{thm:main} and Proposition \ref{prop:main_for_prelim_tor_case}. 
\end{proof}

\begin{remark}
From Proposition \ref{prop:hol_tub_nbhd_aru}, it follows that $Y_\theta$ admits a holomorphic tubular neighborhood in $X_\theta$ when $\theta$ is an asymptotically positive irrational number. 
Note that the existence of a holomorphic tubular neighborhood of $Y_\theta$ has been known when $\theta$ satisfies the following {\it Diophantine condition}: i.e. when there exist positive numbers $\alpha$ and $\beta$ such that, for any positive integer $n$, $\min_{m\in \mathbb{Z}}|n\theta-m|>\alpha\cdot n^{-\beta}$ \cite{A}. 
Note also that such an existence result of a holomorphic tubular neighborhood of $Y_\theta$ in this example can be applied to a gluing construction of a K3 surface, see \cite[Theorem 1.5]{KU}. 
In this context, Lemma \ref{lem:case2_kinou_lim_zero} can be regarded as a generalization of \cite[Lemma 3.1]{KU2}, which plays an essential role for constructing a projective K3 surface by the gluing method \cite{KU2}. 
\end{remark}

Let us investigate the $\ddbar$-problem on $M_\theta:=X_\theta\setminus Y_\theta$. 
In accordance with \cite{KT1} and \cite{KT2}, we say that ``{\it the $\ddbar$-lemma holds on $M_\theta$}" if the following holds: 
for any given $d$-exact $C^\infty$ $(1, 1)$-form $\vp$ on $M_\theta$, there exists a $C^\infty$ function $\Psi$ on $M_\theta$ such that $\vp=\ddbar \Psi$ holds. 
As an analogue of \cite[Example 4.4]{KT1}, we show the following: 
\begin{proposition}\label{prop:ddbar_lem_for_Bl9ptsP2}
Let $\theta$ be a real number and $(X_\theta, Y_\theta)$ be as in Example \ref{eg:blP2at9pts_sp_by_KoTLS}. Then the following holds for $M_\theta:=X_\theta\setminus Y_\theta$: \\
$(i)$ When $\theta$ is an asymptotically positive irrational number, the $\ddbar$-lemma holds on $M_\theta$. \\
$(ii)$ When $\theta$ is an asymptotically zero irrational number, the $\ddbar$-lemma does {\it not} hold on $M_\theta$. 
\end{proposition}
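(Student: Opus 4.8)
The plan is to deduce Proposition \ref{prop:ddbar_lem_for_Bl9ptsP2} from Corollary \ref{cor:main} together with a standard sheaf-theoretic reformulation of the $\ddbar$-lemma on a noncompact complex surface. First I would recall that on a complex manifold $M$ the validity of the $\ddbar$-lemma for $d$-exact $(1,1)$-forms can be rephrased via the exact sequence of sheaves relating $d$-closed forms, $\ddbar$-potentials, and pluriharmonic functions. Concretely, writing $\mathcal{P}_M$ for the sheaf of pluriharmonic functions (real parts of holomorphic functions), one has the short exact sequence $0\to \mathcal{P}_M\to A^0_{\mathbb{R}}\xrightarrow{\ddbar} (\text{real }\ddbar\text{-exact }(1,1)\text{-forms})\to 0$ and, since $M_\theta$ is a surface, $d$-exact $(1,1)$-forms on $M_\theta$ are automatically $\ddbar$-exact up to the image of $\del$ and $\delbar$ of $(1,0)$- and $(0,1)$-forms; the obstruction to going from a $d$-exact $(1,1)$-form to a $\ddbar$-potential lands in $H^1(M_\theta,\mathcal{P}_{M_\theta})$. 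Using the exponential-type sequence $0\to \mathbb{R}\to \mathcal{O}_{M_\theta}\xrightarrow{\mathrm{Im}} \mathcal{P}_{M_\theta}\to 0$ (or rather $0\to\mathbb{R}\to\mathcal{O}\to\mathcal{P}\to 0$ given by $f\mapsto \mathrm{Re}\,f$ whose kernel is $\sqrt{-1}\,\mathbb{R}$), one relates $H^1(M_\theta,\mathcal{P}_{M_\theta})$ to $H^1(M_\theta,\mathcal{O}_{M_\theta})$ and $H^1(M_\theta,\mathbb{R})\cong H^1(M_\theta;\mathbb{R})$, the latter being finite-dimensional and, crucially, with $H^2(M_\theta;\mathbb{R})$ also controlled.

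The key point for $(i)$: when $\theta$ is asymptotically positive, Corollary \ref{cor:main}$(ii)$ gives $H^1(M_\theta,\mathcal{O}_{M_\theta})=0$. Feeding this into the long exact sequence associated to $0\to\sqrt{-1}\,\mathbb{R}\to\mathcal{O}_{M_\theta}\to\mathcal{P}_{M_\theta}\to 0$, one obtains a surjection $H^1(M_\theta;\mathbb{R})\cong H^1(M_\theta,\sqrt{-1}\mathbb{R})\to H^1(M_\theta,\mathcal{P}_{M_\theta})$ with the next term $H^2(M_\theta,\mathcal{O}_{M_\theta})$, which vanishes by Lemma \ref{lem:first_lem_in_prelim}$(iii)$ (indeed $q\ge 2$). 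So $H^1(M_\theta,\mathcal{P}_{M_\theta})$ is a quotient of $H^1(M_\theta;\mathbb{R})$. The remaining task is to check that the $d$-exactness hypothesis kills precisely this topological obstruction: a $d$-exact $(1,1)$-form represents the zero class in de Rham $H^2$, hence its image in $H^1(M_\theta,\mathcal{P}_{M_\theta})$ vanishes after tracing through the commutative diagram comparing the $\ddbar$-sequence with the de Rham complex, so the form admits a global $\ddbar$-potential. This mirrors \cite[Example 4.4]{KT1}, and I would cite that argument structure and the toroidal-group analogue \cite{KT1} for the bookkeeping.

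For $(ii)$: when $\theta$ is asymptotically zero, Corollary \ref{cor:main}$(iii)$ says $H^1(M_\theta,\mathcal{O}_{M_\theta})$ is of non-Hausdorff type, in particular infinite-dimensional and non-Hausdorff. The strategy is to produce an explicit $d$-exact $(1,1)$-form with no $\ddbar$-potential by exploiting the non-closedness of $B^{0,1}(M_\theta)$ inside $Z^{0,1}(M_\theta)$. Concretely, pick $a\in\overline{B^{0,1}(M_\theta)}\setminus B^{0,1}(M_\theta)$ — such $a$ exists exactly because $H^1(M_\theta,\mathcal{O}_{M_\theta})$ is non-Hausdorff — and consider the real $(1,1)$-form $\vp := \del a + \overline{\del a}$ or a suitable $\del$ of a primitive; one shows $\vp$ is $d$-exact (since $a$ is a limit of $\delbar$-exact forms and $\del\delbar$ behaves continuously) but that a $\ddbar$-potential $\Psi$ would force $a - \delbar\Psi$ to be $\delbar$-closed and holomorphic-correcting in a way that contradicts $a\notin B^{0,1}(M_\theta)$. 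I expect \textbf{the main obstacle} to be this step: making the passage from the non-Hausdorffness statement (a topological property of $H^1(M_\theta,\mathcal{O}_{M_\theta})$) to a concrete $d$-exact $(1,1)$-form without $\ddbar$-potential rigorous, in particular controlling the interaction between $\del$, $\delbar$ and the closures of coboundary spaces, and ensuring the candidate form is genuinely $d$-exact and not merely $d$-closed. I would handle this by following the template of \cite[Example 4.4]{KT1} closely, transporting it through the Mayer--Vietoris / \v{C}ech description already set up in \S\ref{section:prelim_case_non_tor_1compl_etc}, and using the elements $\mathfrak{h}^{(\lambda)}$ constructed in the proof of Theorem \ref{thm:main} as the explicit source of the obstruction.
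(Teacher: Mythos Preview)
For part~$(i)$ your approach and the paper's both rest on Corollary~\ref{cor:main}$(ii)$, namely $H^1(M_\theta,\mathcal{O}_{M_\theta})=0$. The paper, however, bypasses the pluriharmonic-sheaf machinery entirely via the elementary Lemma~\ref{lem:ddbar_lem_for_H01zero}: given $\vp=d\Phi$, both $\Phi_{(0,1)}$ and $\overline{\Phi_{(1,0)}}$ are $\delbar$-closed $(0,1)$-forms, hence $\delbar$-exact by hypothesis, and a $\ddbar$-potential drops out in one line. Your route through $H^1(M_\theta,\mathcal{P}_{M_\theta})$ can be made to work, but note that (with $H^1(\mathcal{O})=H^2(\mathcal{O})=0$) this group is isomorphic to $H^2(M_\theta,\mathbb{R})$, which is $11$-dimensional here, so the step ``the $d$-exactness hypothesis kills precisely this topological obstruction'' is doing real work and is less transparent than the paper's direct argument.

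For part~$(ii)$ the two arguments are genuinely different. The paper follows \cite[Theorem~3.3]{KT2}, not \cite[Example~4.4]{KT1}, and uses only the \emph{infinite-dimensionality} of $H^1(M_\theta,\mathcal{O}_{M_\theta})$ together with the simple connectedness of $M_\theta$ (cited from~\cite{GS}). One picks $\xi\in H^1(M_\theta,\mathcal{O}_{M_\theta})$ outside the image of $H^1(M_\theta,\mathbb{Z})$ and lets $L$ be its image in $H^1(M_\theta,\mathcal{O}_{M_\theta}^*)$; then $L$ is topologically trivial but holomorphically nontrivial, so the Chern curvature $\Theta$ of any Hermitian metric on $L$ is a $d$-exact real $(1,1)$-form. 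If $\Theta=\sqrt{-1}\,\ddbar f$ one rescales to a flat metric, whence $L$ comes from $H^1(M_\theta,{\rm U}(1))$ via the sequence $0\to{\rm U}(1)\to\mathcal{O}^*\to\mathcal{P}\to 0$; but $H^1(M_\theta,{\rm U}(1))=0$ since $M_\theta$ is simply connected, contradicting nontriviality of $L$.

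Your approach via $a\in\overline{B^{0,1}(M_\theta)}\setminus B^{0,1}(M_\theta)$ can also be completed, but you have misidentified the obstacle. The $d$-exactness of $\vp:=\del a+\overline{\del a}$ is immediate: since $\delbar a=0$ one has $da=\del a$, so $\vp=d(a+\bar a)$; no limiting argument or appeal to the $\mathfrak{h}^{(\lambda)}$ is needed. The genuine missing ingredient is again simple connectedness: from $\vp=\ddbar\Psi$ one gets $d(a+\bar a-\delbar\Psi)=0$, and it is only because $H^1_{\rm dR}(M_\theta)=0$ that one may write $a+\bar a-\delbar\Psi=dg$ and compare $(0,1)$-parts to conclude $a=\delbar(\Psi+g)\in B^{0,1}(M_\theta)$. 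You do not flag this step, and without it the argument does not close.
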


\begin{proof}
As the assertion $(i)$ follows from Corollary \ref{cor:main} and Lemma \ref{lem:ddbar_lem_for_H01zero} below, we will show the assertion $(ii)$ in what follows. 
Note that our argument here is based on the argument in the proof of \cite[Theorem 3.3]{KT2}. 

Assume that $\theta$ is an asymptotically zero irrational number. 
Consider the long exact sequence 
\[
H^1(M_\theta, \mathbb{Z}) \to H^1(M_\theta, \mathcal{O}_{M_\theta}) \to H^1(M_\theta, \mathcal{O}_{M_\theta}^*) \to H^2({M_\theta}, \mathbb{Z})
\]
induced from the exponential exact sequence.  
It follows from Corollary \ref{cor:main} that there exists an element $\xi\in H^1(M_\theta, \mathcal{O}_{M_\theta})$ which is not included in the image of the map $H^1(M_\theta, \mathbb{Z})\to H^1(M_\theta, \mathcal{O}_{M_\theta})$. 
Let $L$ be the image of $\xi$ by the map $H^1(M_\theta, \mathcal{O}_{M_\theta}) \to H^1(M_\theta, \mathcal{O}_{M_\theta}^*)$. 
In what follows, we regard $L$ as a holomorphic line bundle on $M_\theta$. 
Note that, by construction, $L$ is topologically trivial whereas it is not holomorphically trivial. 

Take a $C^\infty$ Hermitian metric $h$ on $L$ and denote by 
$\Theta := \sqrt{-1}\Theta_h$ the Chern curvature of it. As $L$ is topologically trivial, $[\Theta]=0\in H^2(M_\theta, \mathbb{R})$ holds. 
Therefore $\Theta$ is $d$-exact. 
Assume that there exists a function $f\colon M_\theta\to \mathbb{C}$ such that $\Theta = \sqrt{-1}\ddbar f$ holds. Then, as 
\[
\sqrt{-1}\ddbar \,\overline{f}=\sqrt{-1}\cdot  \overline{\delbar \del f}
=-\sqrt{-1} \overline{\ddbar f}=\overline{\Theta}=\Theta, 
\]
we may assume that $f$ is real-valued by replacing 
$f$ with $(f+\overline{f})/2$. 
Let $h'$ be a Hermitian metric on $L$ defined by letting $h':= h\cdot e^{-f}$. 
Then, as $\sqrt{-1}\Theta_{h'}\equiv 0$ holds by construction, it follows that $h'$ is a flat metric on $L$. 

Consider the short exact sequence 
\[
0\to {\rm U}(1) \to \mathcal{O}_{M_\theta}^*\to \mathcal{P}_{M_\theta}\to 0
\]
as in the proof of \cite[Theorem 3.3]{KT2}, where $\mathcal{P}_{M_\theta}$ denotes the sheaf of real-valued pluriharmonic functions and the map $\mathcal{O}_{M_\theta}^*\to \mathcal{P}_{M_\theta}$ is defined by 
\[
\Gamma(U, \mathcal{O}_{M_\theta}^*)\ni h\mapsto \log |h|\in 
\Gamma(U, \mathcal{P}_{M_\theta})
\]
for each open set $U\subset M_\theta$. 
Then one has the induced long exact sequence
\[
\cdots \to H^1(M_\theta, {\rm U}(1))
\to H^1(M_\theta, \mathcal{O}_{M_\theta}^*)
\to H^1(M_\theta, \mathcal{P}_{M_\theta})\to \cdots, 
\]
by which the element $L\in H^1(M_\theta, \mathcal{O}_{M_\theta}^*)$ is mapped to $0\in H^1(M_\theta, \mathcal{P}_{M_\theta})$, since $h'$ is flat. 
As $M_\theta$ is simply connected (see \cite[Chapter 3]{GS} for example), $ H^1(M_\theta, {\rm U}(1))$ is trivial. 
Therefore $L=\mathbb{I}_{M_\theta}\in H^1(M_\theta, \mathcal{O}_{M_\theta}^*)$ follows, which contradicts to the construction of $L$. Thus the $\ddbar$-lemma does not hold on $M_\theta$ in this case. 
\end{proof}

\begin{lemma}\label{lem:ddbar_lem_for_H01zero}
Let $\Omega$ be a complex manifold such that $H^1(\Omega, \mathcal{O}_\Omega)=0$. Then the $\ddbar$-lemma holds on $\Omega$. 
\end{lemma}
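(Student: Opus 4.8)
The plan is to carry out the classical $\ddbar$-Poincar\'e argument, invoking the hypothesis $H^1(\Omega,\mathcal{O}_\Omega)=0$ --- equivalently, through the Dolbeault isomorphism $H^{0,1}_{\delbar}(\Omega)\cong H^1(\Omega,\mathcal{O}_\Omega)$ --- twice. Let $\vp$ be a $d$-exact $C^\infty$ $(1,1)$-form on $\Omega$, and write $\vp=d\alpha$ for a $C^\infty$ $1$-form $\alpha$. First I would decompose $\alpha=\alpha^{1,0}+\alpha^{0,1}$ into its bidegree components and expand $d\alpha=\del\alpha^{1,0}+\bigl(\delbar\alpha^{1,0}+\del\alpha^{0,1}\bigr)+\delbar\alpha^{0,1}$. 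Comparing types, since $\vp$ is of pure bidegree $(1,1)$ the $(2,0)$- and $(0,2)$-parts must vanish, so $\del\alpha^{1,0}=0$, $\delbar\alpha^{0,1}=0$, and $\vp=\delbar\alpha^{1,0}+\del\alpha^{0,1}$.

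Next I would solve the two resulting first-order equations. Since $\alpha^{0,1}$ is a $\delbar$-closed $C^\infty$ $(0,1)$-form and $H^{0,1}_{\delbar}(\Omega)\cong H^1(\Omega,\mathcal{O}_\Omega)=0$, there is a $C^\infty$ function $g$ on $\Omega$ with $\alpha^{0,1}=\delbar g$. To treat $\alpha^{1,0}$ I would pass to complex conjugates: $\overline{\alpha^{1,0}}$ is a $C^\infty$ $(0,1)$-form with $\delbar\,\overline{\alpha^{1,0}}=\overline{\del\alpha^{1,0}}=0$, so by the same vanishing there is a $C^\infty$ function $h$ with $\overline{\alpha^{1,0}}=\delbar h$, i.e. $\alpha^{1,0}=\del\bar h$. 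Substituting back and using $\delbar\del=-\del\delbar$,
\[
\vp=\delbar(\del\bar h)+\del(\delbar g)=-\ddbar\bar h+\ddbar g=\ddbar(g-\bar h),
\]
so $\Psi:=g-\bar h$ is the desired $C^\infty$ function. (If one wishes to stay within real $(1,1)$-forms, one may take $\alpha$ real from the outset and then $h=\bar g$, giving $\Psi=g-\bar g$; this refinement is not needed for the statement as phrased.)

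I do not anticipate a genuine obstacle here: the only step deserving a word of justification is the use of the Dolbeault theorem to pass from the sheaf-cohomological hypothesis $H^1(\Omega,\mathcal{O}_\Omega)=0$ to the solvability of $\delbar u=\beta$ for every $\delbar$-closed $C^\infty$ $(0,1)$-form $\beta$ on $\Omega$, which holds on an arbitrary complex manifold. Everything else is bookkeeping with the bidegree decomposition of $d=\del+\delbar$.
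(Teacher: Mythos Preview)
Your argument is correct and matches the paper's proof essentially line for line: decompose a primitive $\alpha$ by bidegree, use the vanishing of $H^{0,1}_{\delbar}(\Omega)$ (via Dolbeault) once on $\alpha^{0,1}$ and once on $\overline{\alpha^{1,0}}$, then substitute back to obtain $\vp=\ddbar(g-\bar h)$. The paper's $\psi_1,\psi_2$ are your $h,g$, and the remaining differences are purely notational.
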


\begin{proof}
Take a $d$-exact $C^\infty$ $(1, 1)$-form $\vp$ on $\Omega$. 
By the $d$-exactness, there exists a $C^\infty$ $1$-form $\Phi$ on $\Omega$ such that $d\Phi=\vp$ holds. 
Let $\Phi=\Phi_{(1, 0)}+\Phi_{(0, 1)}$ be the decomposition of $\Phi$ into 
the $(1, 0)$-part $\Phi_{(1, 0)}$ and the $(0, 1)$-part $\Phi_{(0, 1)}$. 
As is simply observed, both $\overline{\Phi_{(1, 0)}}$ and $\Phi_{(0, 1)}$ are $\overline{\partial}$-closed $(0, 1)$-forms on $\Omega$. 
Therefore, by assumption, one can take $C^\infty$ function $\psi_1$ and $\psi_2$ on $\Omega$ such that $\Phi_{(1, 0)}=\overline{(\overline{\partial}\psi_1)}=\partial\overline{\psi_1}$ and $\Phi_{(0, 1)}=\overline{\partial}\psi_2$ hold. Then it holds that 
$\varphi=\partial\overline{\partial}(-\overline{\psi_1}+\psi_2)$, 
from which the assertion follows. 
\end{proof}

In the rest of this subsection, we investigate the following question, which is posed by Professor Shigeharu Takayama and concerning the Hodge decomposition theorem. 
\begin{question}\label{question_hodge_decomp_from_takayamasensei}
Let $M_\theta:=X_\theta\setminus Y_\theta$ be as in Example \ref{eg:blP2at9pts_sp_by_KoTLS}. 
When $\theta$ is an asymptotically positive irrational number, 
does the equation 
\begin{equation}\label{label:eq_hodge_eq_for_blp29pts}
{\rm dim}\,H^r(M_\theta, \mathbb{C}) 
= \sum_{p+q=r}{\rm dim}\,H^q(M_\theta, \Omega^p_{M_\theta})
\end{equation}
hold for $r=0, 1, 2$? When $\theta$ is an asymptotically zero irrational number, 
does the equation
\[
{\rm dim}\,H^r(M_\theta, \mathbb{C}) 
= \sum_{p+q=r}{\rm dim}\,H^q(M_\theta, \Omega^p_{M_\theta})_{\rm Hausdorff} 
\]
hold for $r=0, 1, 2$? 
\end{question}
Here we denote by $H^q(M_\theta, \Omega^p_{M_\theta})_{\rm Hausdorff}$ the quotient space $Z^{p, q}(M_\theta)/\overline{B^{p, q}(M_\theta)}$, which coincides with the Hausdorffization of the topological vector space $H^q(M_\theta, \Omega^p_{M_\theta})$. 
Note that the equation (\ref{label:eq_hodge_eq_for_blp29pts}) holds if the Hodge decomposition theorem holds on $M_\theta$. 
Note also that such a decomposition actually occurs on theta toroidal groups such as $M$ in the previous subsection \S 7.1 \cite[2.2.6]{AK}. 
As a partial answer to this question, we show the following: 
\begin{proposition}\label{prop:takayamasensei_question_partialsolution}
Let $\theta$ be an irrational number and $p_1, p_2, \dots$, $p_8$, $q_\theta$, and $M_\theta:=X_\theta\setminus Y_\theta$ be as in Example \ref{eg:blP2at9pts_sp_by_KoTLS}. Then the following holds: \\
$(i)$ The equation
\[
{\rm dim}\,H^r(M_\theta, \mathbb{C}) = \begin{cases}
1 & \text{if}\ r=0\\
11 & \text{if}\ r=2\\
0 & \text{otherwise}
\end{cases}
\]
holds. \\
$(ii)$ Assume that the configuration of the nine points $Z:=\{p_1, p_2, \dots, p_8, q_\theta\}$ is enough general so that $\# Z=9$, $Z$ includes four points among which no three points are collinear, and that there exists no compact analytic curve included in $M_\theta$. Assume also that 
$\theta$ is asymptotically positive. 
Then 
\[
{\rm dim}\,H^q(M_\theta, \Omega^p_{M_\theta}) = \begin{cases}
1 & \text{if}\ (p, q)=(0, 0),\ (2, 0)\\
10 & \text{if}\ (p, q)=(1, 1)\\
0 & \text{otherwise}
\end{cases}
\]
holds. 
Especially, the equation (\ref{label:eq_hodge_eq_for_blp29pts}) holds for $r=0, 1, 2$ in this case. 
\end{proposition}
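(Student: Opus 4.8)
The plan is to handle the two parts in turn, using part $(i)$ as an input for part $(ii)$.

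\textbf{Part $(i)$.} I would read off the Betti numbers of $M_\theta$ from the Gysin (Thom--Gysin) sequence of the smooth divisor $Y_\theta\subset X_\theta$,
\[
\cdots\to H^{k-2}(Y_\theta,\mathbb{C})\xrightarrow{(i_{Y_\theta})_*}H^k(X_\theta,\mathbb{C})\xrightarrow{j^*}H^k(M_\theta,\mathbb{C})\to H^{k-1}(Y_\theta,\mathbb{C})\xrightarrow{(i_{Y_\theta})_*}H^{k+1}(X_\theta,\mathbb{C})\to\cdots,
\]
plugging in the topological cohomology of the rational surface $X_\theta$ (namely $1,0,10,0,1$, using that $X_\theta$ is simply connected with $\mathrm{Pic}(X_\theta)\cong\mathbb{Z}^{10}$ all of Hodge type $(1,1)$; this is where $\#Z=9$, i.e. that $X_\theta$ is a genuine ninefold blow-up, enters) and of the elliptic curve $Y_\theta$ ($1,2,1$). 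Only three Gysin maps matter: $(i_{Y_\theta})_*\colon H^0(Y_\theta)\to H^2(X_\theta)$ sends the generator to $\mathrm{PD}[Y_\theta]=c_1(-K_{X_\theta})\neq 0$, hence is injective; $(i_{Y_\theta})_*\colon H^1(Y_\theta)\to H^3(X_\theta)=0$ is zero; and $(i_{Y_\theta})_*\colon H^2(Y_\theta)\to H^4(X_\theta)$ is an isomorphism. Feeding these in gives $\dim H^r(M_\theta,\mathbb{C})=1,0,11,0,0$ for $r=0,1,2,3,4$ (with a Gauss--Bonnet cross-check $\chi(M_\theta)=\chi(X_\theta)-\chi(Y_\theta)=12-0$).

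\textbf{Part $(ii)$, the easy Dolbeault groups.} I would first compute every $H^q(M_\theta,\Omega^p_{M_\theta})$ except $H^1(M_\theta,\Omega^1_{M_\theta})$. We have $H^0(M_\theta,\mathcal{O}_{M_\theta})=\mathbb{C}$ (a global holomorphic function restricts to a constant on the tubular neighbourhood $V^*$ by Lemma~\ref{lem:nononconstantholfunctionexistsonV}, hence is constant by connectedness), $H^1(M_\theta,\mathcal{O}_{M_\theta})=0$ by Corollary~\ref{cor:main}$(ii)$ (since $\theta$ is asymptotically positive), and $H^q(M_\theta,\mathcal{F})=0$ for all $q\geq 2$ and all locally free $\mathcal{F}$ because $M_\theta$ is strongly $2$-complete by Ohsawa's theorem (as already used in Lemma~\ref{lem:first_lem_in_prelim}$(iii)$). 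Since the canonical section of $[Y_\theta]$ trivialises $[Y_\theta]|_{M_\theta}$, we get $K_{M_\theta}=(-[Y_\theta])|_{M_\theta}\cong\mathcal{O}_{M_\theta}$, so $H^q(M_\theta,\Omega^2_{M_\theta})\cong H^q(M_\theta,\mathcal{O}_{M_\theta})$ for all $q$. This leaves exactly two claims to establish: $H^0(M_\theta,\Omega^1_{M_\theta})=0$ and $H^1(M_\theta,\Omega^1_{M_\theta})\cong\mathbb{C}^{10}$.

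\textbf{Part $(ii)$, vanishing of $H^0(\Omega^1)$.} Here I would use the neighbourhood machinery of \S\S3--5. As $\theta$ is asymptotically positive, $Y_\theta$ admits a holomorphic tubular neighbourhood $V$ in $X_\theta$ (Proposition~\ref{prop:hol_tub_nbhd_aru}); in the charts $(z_j,w_j)$ of Remark~\ref{rmk:niceVjs_whenCaseI_II} I would Laurent-expand a given holomorphic $1$-form $\omega$ on $M_\theta$, restricted to $V^*$, in the fibre coordinate $w_j$. The coefficients of $dz_j$ and $dw_j$ in each degree assemble into global sections of nonzero powers of $N_{Y_\theta/X_\theta}$, all of which vanish because $N_{Y_\theta/X_\theta}$ is non-torsion (i.e. $H^0(Y_\theta,\mathcal{O}_{Y_\theta}(F))=0$ for $F\in\mathrm{Pic}^0(Y_\theta)\setminus\{\mathbb{I}_{Y_\theta}\}$); hence $\omega|_{V^*}=\alpha\,dz+\beta\,\zeta$ with constants $\alpha,\beta$, where $dz=\{dz_j\}$ and $\zeta=\{dw_j/w_j\}$ are globally defined on $V^*$. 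In particular $\omega|_{V^*}$ is $d$-closed, so $\omega$ is $d$-closed on $M_\theta$ by the identity theorem. Integrating $\omega$ over a fibre circle $\{|w_j|=\varepsilon\}$, which is null-homologous in the simply connected $M_\theta$, yields $2\pi\sqrt{-1}\,\beta=0$, so $\beta=0$; then $\alpha\,dz$ extends $\omega|_{V^*}$ holomorphically across $Y_\theta$, so $\omega$ and $\alpha\,dz$ glue to a global holomorphic $1$-form on $X_\theta$, which is $0$ since $X_\theta$ is rational; thus $\alpha=0$ and $\omega\equiv 0$.

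\textbf{Part $(ii)$, conclusion.} Finally I would run the Fr\"ohlicher (Hodge--de Rham) spectral sequence $E_1^{p,q}=H^q(M_\theta,\Omega^p_{M_\theta})\Rightarrow H^{p+q}(M_\theta,\mathbb{C})$, which converges on any complex manifold. By the computations above the only possibly nonzero entries are $E_1^{0,0}\cong\mathbb{C}$, $E_1^{2,0}\cong H^0(M_\theta,\Omega^2_{M_\theta})\cong\mathbb{C}$, and $E_1^{1,1}=H^1(M_\theta,\Omega^1_{M_\theta})$; every differential entering or leaving one of these has its other end in a zero group or in a column $\Omega^p$ with $p\geq 3$ that does not exist. (The vanishing $H^0(M_\theta,\Omega^1_{M_\theta})=0$ is precisely what kills the $d_1$ that could otherwise hit $E_1^{2,0}$.) So the spectral sequence degenerates at $E_1$ in total degrees $0,1,2$, i.e. $\dim H^r(M_\theta,\mathbb{C})=\sum_{p+q=r}\dim H^q(M_\theta,\Omega^p_{M_\theta})$ for $r=0,1,2$, which is exactly equation~(\ref{label:eq_hodge_eq_for_blp29pts}); reading it at $r=2$ with part $(i)$ gives $11=1+\dim H^1(M_\theta,\Omega^1_{M_\theta})+0$, so $H^1(M_\theta,\Omega^1_{M_\theta})\cong\mathbb{C}^{10}$ and is in particular finite dimensional. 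The main obstacle is the vanishing $H^0(M_\theta,\Omega^1_{M_\theta})=0$: it is the only step genuinely requiring the asymptotic-positivity hypothesis (through Proposition~\ref{prop:hol_tub_nbhd_aru}) and the only place the elliptic-curve-neighbourhood analysis is invoked; one should also keep careful track of where the remaining generality hypotheses on $Z$ (no three of four points collinear; no compact curve in $M_\theta$) are actually needed — to guarantee that $-K_{X_\theta}$ is nef with $|-K_{X_\theta}|=\{Y_\theta\}$, and to rule out extra contributions to $H^1(M_\theta,\Omega^1_{M_\theta})$ if one prefers a direct sheaf-theoretic computation to the spectral-sequence count.
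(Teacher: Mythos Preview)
Your argument is correct, and takes a genuinely different route from the paper in part $(ii)$.

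For part $(i)$ you use the Gysin sequence where the paper uses Mayer--Vietoris for $\{M_\theta,V\}$; these are equivalent and yield the same numbers.

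For part $(ii)$ the differences are more substantial. The paper computes $H^1(M_\theta,\Omega^1_{M_\theta})$ directly by running Mayer--Vietoris for $\Omega^1$, using the splitting $\Omega^1_V=\mathcal{O}_V\cdot dz\oplus\mathcal{O}_V([-Y_\theta])\cdot dw/w$ on the tubular neighbourhood and Proposition~\ref{prop:section5_main} to match $H^1(V,\cdot)$ with $H^1(V^*,\cdot)$. Your Fr\"ohlicher argument replaces this computation entirely: once all the $E_1^{p,q}$ with $(p,q)\neq(1,1)$ are known and every differential touching them vanishes, convergence to the (finite-dimensional) $H^2(M_\theta,\mathbb{C})$ forces $\dim E_1^{1,1}=10$. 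For the vanishing $H^0(M_\theta,\Omega^1_{M_\theta})=0$, the paper's Lemma~\ref{lem:dim_of_h01} views a nonzero $\xi$ as a section of $\Omega^1_{X_\theta}\otimes[Y_\theta]$, builds the associated foliation, deduces $H^0(X_\theta,T_{X_\theta})\neq 0$, and derives a contradiction from \cite[Lemma~A.2]{KU}; this is exactly where the hypotheses ``four points with no three collinear'' and ``no compact curve in $M_\theta$'' are used. Your residue/simple-connectivity argument ($\int_{|w_j|=\varepsilon}\omega=2\pi\sqrt{-1}\,\beta=0$ forces $\beta=0$, then $\alpha\,dz$ extends to $X_\theta$ and dies because $h^{1,0}(X_\theta)=0$) avoids the foliation step altogether and hence does not need those two genericity hypotheses; only the holomorphic tubular neighbourhood (Proposition~\ref{prop:hol_tub_nbhd_aru}, available since $\theta$ is asymptotically positive) and the simple connectivity of $M_\theta$ are required. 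So your approach is more elementary and in fact yields a slightly stronger statement than the one proved in the paper.
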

%

\begin{proof}
The assertion $(i)$ directly follows from the Mayer--Vietoris sequence for the open covering $\{M_\theta, V\}$ of $X_\theta$, where $V$ is a tubular neighborhood of $Y_\theta$ in $X_\theta$. 
For $(ii)$, $H^0(M_\theta, \Omega_{M_\theta}^p)\cong \mathbb{C}$ simply follows from Lemma \ref{lem:nononconstantholfunctionexistsonV} and the triviality of the canonical bundle of $M_\theta$ for $p=0, 2$. 
The vanishing of $H^1(M_\theta, \Omega^p_{M_\theta})$ for $p=0, 2$ follows from Corollary \ref{cor:main} $(ii)$ and the triviality of the canonical bundle of $M_\theta$, and 
the vanishing of $H^2(M_\theta, \Omega^p_{M_\theta})$ for $p=0, 1, 2$ follows from the same argument as in the proof of Lemma \ref{lem:first_lem_in_prelim} $(iii)$. 
In order to investigate $H^q(M_\theta, \Omega^p_{M_\theta})$ for $(p, q)= (1, 0), (1, 1)$, 
take a holomorphic tubular neighborhood $V$ of $Y:=Y_\theta$, open covering $\{V_j\}$ of $V$, and local coordinates $(z_j, w_j)$ on each $V_j$ as in Remark \ref{rmk:niceVjs_whenCaseI_II}. 
Denote simply by $dz$ and $dw/w$ the holomorphic $1$-forms defined by $\{(V_j, dz_j)\}$ and $\{(V_j, dw_j/w_j)\}$ on $V$, respectively. 
Consider the long exact sequence 
\begin{align}\label{exactseq_MV_Mtheta}
0&\to H^0(X_\theta, \Omega_{X_\theta}^1) \to  H^0(M_\theta, \Omega_{M_\theta}^1) \oplus H^0(V, \Omega_V^1) \to H^0(V^*, \Omega_{V^*}^1) \\
&\to H^1(X_\theta, \Omega_{X_\theta}^1) \to  H^1(M_\theta, \Omega_{M_\theta}^1) \oplus H^1(V, \Omega_V^1) \to H^1(V^*, \Omega_{V^*}^1) \to 0\nonumber 
\end{align}
induced from the Mayer--Vietoris sequence for the open covering $\{M_\theta, V\}$ of $X_\theta$, where $V^*:=M_\theta\cap V (=V\setminus Y_\theta)$. 
Then, as 
\begin{equation}\label{equation:genericcase_Omega1_decomp_blp2at9pts}
\Omega_V^1=\mathbb{I}_V\cdot dz\oplus [-Y_\theta]\cdot \frac{dw}{w}
\end{equation}
holds as vector bundles, 
it follows from Lemma \ref{lem:nononconstantholfunctionexistsonV} and Lemma \ref{lem:dim_of_h01} below that there exists a exact sequence 
\begin{align*}
0 \to \mathbb{C}^9 \to H^1(M_\theta, \Omega_{M_\theta}^1) &\oplus 
\left(H^1(V, \mathcal{O}_V)\cdot dz \oplus H^1(V, I_{Y_\theta})\cdot \frac{dw}{w}\right)\\
&\to \left(H^1(V^*, \mathcal{O}_{V^*})\cdot dz \oplus H^1(V^*, \mathcal{O}_{V^*})\cdot \frac{dw}{w}\right) \to 0, 
\end{align*}
where $I_{Y_\theta}\subset \mathcal{O}_V$ is the defining ideal sheaf of $Y_\theta$. 
As it follows from 
a standard argument by using the long exact sequence induced by the short exact sequence 
$0\to I_{Y_\theta}\to \mathcal{O}_V\to \mathcal{O}_V/I_{Y_\theta}\to 0$ that 
\[
0 \to H^1(V, I_{Y_\theta}) \to H^1(V, \mathcal{O}_{V}) \to \mathbb{C} \to 0
\]
is exact, one can deduce from Proposition \ref{prop:section5_main} that 
\[
0 \to \mathbb{C}^9 \to H^1(M_\theta, \Omega_{M_\theta}^1) \to \mathbb{C} \to 0
\]
is exact, from which the assertion follows. 
\end{proof}

\begin{lemma}\label{lem:dim_of_h01}
Under the assumptions in Proposition \ref{prop:takayamasensei_question_partialsolution} $(ii)$, 
$H^0(M_\theta, \Omega^1_{M_\theta})=0$ holds. 
\end{lemma}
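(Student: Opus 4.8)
The plan is to show that every $\omega\in H^0(M_\theta,\Omega^1_{M_\theta})$ is $d$-closed, and then to conclude $\omega=0$ from $H^1(M_\theta,\mathbb{C})=0$ and $H^0(M_\theta,\mathcal{O}_{M_\theta})=\mathbb{C}$. The starting observation is that $d\omega$ is a holomorphic $2$-form on the surface $M_\theta$: since the coefficients of $\omega$ are holomorphic one has $\delbar\omega=0$, so $d\omega=\del\omega$, and $\delbar(\del\omega)=-\del(\delbar\omega)=0$; a $\delbar$-closed $(2,0)$-form on a complex surface is holomorphic.

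Next I would examine $\omega$ on a neighborhood of $Y:=Y_\theta$. Since $\theta$ is asymptotically positive, Proposition \ref{prop:hol_tub_nbhd_aru} lets us take a holomorphic tubular neighborhood $V$ of $Y$ together with the chart $\{(V_j,(z_j,w_j))\}$ as in Remark \ref{rmk:niceVjs_whenCaseI_II}. On $V^*:=V\setminus Y$ the forms $dz$ (globally defined because $z_j=z_k+A_{jk}$ holds on each $V_{jk}$) and $\frac{dw}{w}$ (globally defined because $w_j=t_{jk}w_k$ with $t_{jk}\in{\rm U}(1)$) form a holomorphic coframe, so $\Omega^1_{V^*}\cong\mathcal{O}_{V^*}^{\oplus 2}$; hence by Lemma \ref{lem:nononconstantholfunctionexistsonV} one has $H^0(V^*,\Omega^1_{V^*})=\mathbb{C}\,dz\oplus\mathbb{C}\,\frac{dw}{w}$, and therefore $\omega|_{V^*}=a\,dz+c\,\frac{dw}{w}$ for some constants $a,c\in\mathbb{C}$. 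In particular $d(\omega|_{V^*})=0$. Since $d\omega$ is a holomorphic section of $\Omega^2_{M_\theta}$ vanishing on the nonempty open subset $V^*$ of the connected manifold $M_\theta$, the identity theorem gives $d\omega=0$ on $M_\theta$.

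It then remains to note that a closed holomorphic $1$-form $\omega$ on $M_\theta$ is zero: as $H^1(M_\theta,\mathbb{C})=0$ (Proposition \ref{prop:takayamasensei_question_partialsolution} $(i)$) there is a $C^\infty$ function $g$ on $M_\theta$ with $dg=\omega$, and since $\omega$ is of type $(1,0)$ we get $\delbar g=0$, so $g$ is holomorphic; because any holomorphic function on $M_\theta$ restricts to a constant on $V^*$ by Lemma \ref{lem:nononconstantholfunctionexistsonV} and $M_\theta$ is connected, $H^0(M_\theta,\mathcal{O}_{M_\theta})=\mathbb{C}$, so $g$ is constant and $\omega=0$. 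The only step requiring care is the local analysis near $Y$: the conclusion hinges on $\omega|_{V^*}$ being a constant-coefficient combination of $dz$ and $\frac{dw}{w}$, which uses both the existence of a holomorphic tubular neighborhood (hence the asymptotic positivity of $\theta$) and the non-existence of non-constant holomorphic functions on $V^*$.
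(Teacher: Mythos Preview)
Your proof is correct and takes a genuinely different, more elementary route than the paper. The paper argues by contradiction: assuming a nonzero $\xi\in H^0(M_\theta,\Omega^1_{M_\theta})$, it normalizes $\xi|_{V^*}=a\,dz+\frac{dw}{w}$, extends $\xi$ to a section of $\Omega^1_{X_\theta}\otimes[Y_\theta]$, and reads off a singular holomorphic foliation $\mathcal{G}$ on $X_\theta$ with $N_{\mathcal{G}}=[Y_\theta]$; then $T_{\mathcal{G}}\cong K_{X_\theta}^{-1}\otimes N_{\mathcal{G}}^{-1}$ is trivial, forcing $H^0(X_\theta,T_{X_\theta})\neq 0$, which contradicts a result of \cite{KU} on the blow-up of $\mathbb{P}^2$ at nine generic points. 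This uses both the genericity of the configuration $Z$ (for $H^0(X_\theta,T_{X_\theta})=0$) and the assumption that $M_\theta$ contains no compact curve (to ensure $\xi$ has only isolated zeros and defines a foliation).

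Your argument bypasses all of this: you only need the holomorphic tubular neighborhood (hence the asymptotic positivity of $\theta$), the vanishing $H^1(M_\theta,\mathbb{C})=0$ from part~$(i)$, and Lemma~\ref{lem:nononconstantholfunctionexistsonV}. In particular you never use the no-compact-curve hypothesis or the genericity of the nine points, so your proof actually establishes the lemma under weaker assumptions than those stated in Proposition~\ref{prop:takayamasensei_question_partialsolution}~$(ii)$. The paper's approach, on the other hand, is more geometric and makes visible the obstruction coming from the automorphism group of $X_\theta$.
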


\begin{proof}
Here we use the notation in the proof of Proposition \ref{prop:takayamasensei_question_partialsolution}. 
Assume that there exists a non-trivial element $\xi \in H^0(M_\theta, \Omega^1_{M_\theta})$. 
By Lemma \ref{lem:nononconstantholfunctionexistsonV}, the exact sequence (\ref{exactseq_MV_Mtheta}), and the equation (\ref{equation:genericcase_Omega1_decomp_blp2at9pts}), we may assume 
\[
\xi|_{V^*} = a\cdot dz+\frac{dw}{w}
\]
holds for some $a\in \mathbb{C}$. 
Note that $\xi$ has at most isolated zeros, since there exists no compact analytic curve in $M_\theta$. 
By regarding $\xi$ as an element of $H^0(X_\theta, \Omega_{X_\theta}^1\otimes [Y_\theta])=H^0(X_\theta, \text{Hom}(T_{X_\theta}, [Y_\theta]))$ and considering its kernel, one can construct a (maybe singular) holomoprhic foliation $\mathcal{G}$ on $X_\theta$ such that the normal bundle $N_{\mathcal{G}}$ of $\mathcal{G}$ coincides with the line bundle $[Y_\theta]$ on $X_\theta$ (See \cite[Chapter 2]{Bbook} for example). 
By the equation $K_{X_\theta}^{-1}\cong T_{\mathcal{G}}\otimes N_{\mathcal{G}}$, one has that the tangent bundle $T_{\mathcal{G}}$ of $\mathcal{G}$ is a holomorphically trivial subbundle of $T_{X_\theta}$. Thus one has $H^0(X_{\theta}, T_{X_\theta})\not=0$, which contradicts to \cite[Lemma A.2]{KU} (See also the argument in the proof of \cite[Proposition 8]{B}). 
\end{proof}



\end{document}